\title{Binary Parseval frames from group orbits}
\author[R. P. Mendez, B. G. Bodmann,  Z. J. Baker, M. G. Bullock, and J. E. McLaney]{Robert P. Mendez, Bernhard G. Bodmann, Zachery J. Baker, Micah G. Bullock,\\ and Jacob E. McLaney}
\thanks{The research in this paper was partially supported by NSF grant DMS-1412524}
\newtheorem{thm}{Theorem}[section]
\newtheorem{cor}[thm]{Corollary}
\newtheorem{lemma}[thm]{Lemma}
\newtheorem{prop}[thm]{Proposition}
\theoremstyle{definition}
\newtheorem{defn}[thm]{Definition}
\theoremstyle{remark}
\newtheorem{rem}[thm]{Remark}
\newtheorem{ex}[thm]{Example}
\newtheorem{exs}[thm]{Examples}
\newcommand{\bb}[1]{\mathbb{#1}}
\newcommand{\cl}[1]{\mathcal{#1}}
\newcommand{\normz}[1]{\left\|#1\right\|_0}
\newcommand{\abs}[1]{\left|#1\right|}
\newcommand{\dotpRL}[2]{\left\langle #1,#2\right\rangle}
\newcommand{\nCksm}[2]{\left(\!    \begin{smallmatrix} #1 \\ #2    \end{smallmatrix}  \!\right)}
\newcommand{\nCkGen}[1]{\left(\!    \begin{smallmatrix} #1 \end{smallmatrix}  \!\right)}
\newcommand{\cVn}[1]{\begin{smallmatrix} #1 \end{smallmatrix} } 
\newcommand{\colVecsm}[1]{\left(\!    \begin{smallmatrix} \scriptsize #1  \end{smallmatrix}  \!\right)}
\newcommand{\numto}[1]{\{1,2, \dots, #1 \}}
\def\vphantFour{\vphantom{\cVn{0\\0\\0\\0 }}}
\def\N{\mathbb{N}}
\def\Z{\mathbb{Z}}
\def\R{\mathbb R}
\def\C{\mathbb C}
\def\Zero{\mathbb{O}}
\def\Fm{\mathcal{F}}    
\def\FmG{\mathcal{G}}   
\def\HW{HW_3}
\def\gvec{\mathbf{g}}
\def\hvec{\mathbf{h}}
\DeclareMathOperator*{\rank}{rank}
\DeclareMathOperator*{\gl}{GL}
\DeclareMathOperator*{\aut}{Aut}
\DeclareMathOperator*{\modulo}{mod}
\newcommand{\modp}[1]{(\modulo{#1})}
\def\frepName{\rho}                 
\newcommand{\lrep}[1]{\Lambda_{#1}}   
\newcommand{\frep}[1]{\frepName_{#1}}   
\newcommand{\rrep}[1]{R_{#1}}
\newcommand{\sdo}[1]{\left[{#1}\right]} 
\newcommand{\mgZpq}[1][p^q]{\Z_{#1}^{\times}} 
\newcommand{\msg}[2]{\langle{#1}\rangle_{#2}^{\times}}
\newcommand{\msgtwo}[1][p^q]{\msg{2}{#1}}
\newcommand{\msgk}[1][p^q]{\msg{k}{#1}}
\def\gqrest{{\gamma}_q} 
\def\indBij{\phi} 
\def\indBijInv{\phi^{-1}}
\def\funBij{\Phi} 
\newcommand{\accSet}[1]{\Omega_{#1}}
\newcommand{\anMtx}[1][\Fm]{\Theta_{#1}}
\newcommand{\syMtx}[1][\Fm]{\Theta_{#1}^*}
\newcommand{\fmOpMtx}[1][\Fm]{S_{#1}}
\def\Gram{G}
\newcommand{\GmMtx}[1][\Fm]{\Gram_{#1}}
\newcommand{\bindotp}[2]{\dotpRL{#1}{#2}}
\def\convl{*}
\def\sweq{\cong_{\rm{sw}}}
\def\auteq{\cong_{\rm{aut}}}
\newcommand{\GL}[1]{\gl(#1)}
\newcommand{\Aut}[1]{\aut(#1)}
\newcommand{\textSpace}[1]{\text{\;\;\; #1 \;\;\;} }    
\newcommand{\textSpaceHalf}[1]{\text{\;\;\; #1 } }      
\newcommand{\lrphantombrackets}[4]{\left#1\vphantom{#3}\right.#4\left.\vphantom{#3}\right#2}
\definecolor{grn}{rgb}{0.0, 0.5, 0.0}
\definecolor{gray}{rgb}{0.5, 0.5, 0.5}
\newcommand{\lite}[1]{{\color{gray}#1}}
\begin{document}

\begin{abstract}
Binary Parseval frames share many structural properties with real and complex ones. On the other hand, there are subtle differences, for example that the Gramian of a binary Parseval frame is characterized as a symmetric idempotent whose range contains at least one odd vector. 
Here, we study  binary Parseval frames obtained from the orbit of a vector under a group representation, in short, binary Parseval group frames. In this case, the Gramian of the frame is in the algebra generated by the right regular representation. We identify equivalence classes of such Parseval frames with binary functions on the group that satisfy a convolution identity. This allows us to find structural constraints for such frames. We use these constraints to catalogue equivalence
classes of binary Parseval frames obtained from group representations. As an application, we study the performance of binary Parseval  frames generated
with abelian groups for purposes of error correction. We show that $\Z_{p}^q$ is always preferable to $\Z_{p^q}$ when searching for best performing codes
associated with binary Parseval group frames.
\end{abstract}
\keywords{Binary Parseval frames, group representation, group algebra, binary codes; MSC (2010): 42C15, 15A33, 94B05.}

\maketitle

\section{Introduction}
A binary frame is, in short, a finite, spanning family in a  vector space over the Galois field with two elements. Binary frames share many properties
with finite real or complex frames, which have been studied extensively in mathematics and engineering \cite{CK07a,CK07b,FiniteFrames2013}. Because of the spanning property, frames can serve
to expand any given vector in a linear combination of the frame vectors. However, 
in contrast to bases of a vector space, the frame vectors can include linear dependencies. If this is the case, then
the expansion of a vector is no longer uniquely determined. However, for Parseval frames,
there is a standard choice of coefficients appearing in the expansion of a vector that can be
calculated efficiently. When the appropriate definition of a Parseval frame is made, then this holds in the binary as well as the
real and complex setting \cite{BodmannLeRezaTobinTomforde2009}. In the case of real or complex frames, the expansion coefficients are computed by taking inner products with
the frame vectors. The same property for binary frames requires replacing the inner product with the less restrictive concept of a bilinear form \cite{HotovyLarsonScholze2015}, which
can be taken to be the standard dot product.

Next to similarities, binary frames exhibit differences with the theory of real and complex frames. One of the more striking ways in which they differ
is when considering the Gram matrices.
 The Gram matrices of real or complex Parseval frames are characterized as symmetric or Hermitian idempotent matrices. In the binary case, these properties have to be augmented with the condition of at least one non-zero diagonal entry \cite{bgbOdd}. 
 
 In this paper, we continue comparing the structure of binary Parseval frames with their real or complex counterparts.
We specialize to frames obtained from the orbit of a vector under a group representation. There is already a substantial amount of literature on
real and complex group frames \cite{HL,VW,HW,Vale2008,CW,Waldron2013}. 
Here, we study binary Parseval group frames, with special emphasis on the structure of the Gramians associated with them.
Our first main result is that a binary  Parseval frame  is obtained from the action of a group
if and only if the Gramian is in the group algebra. 

Equivalence classes are useful to study essential properties of such frames.
Each set of unitarily equivalent frames can be identified with a corresponding Gramian,
and the Parseval property of such a unitary equivalence class is encoded in its being Hermitian, 
idempotent and having a non-vanishing diagonal element. 

For group frames, the Gram matrix is shown to be a binary linear combination
of elements of the right regular representation, thus associated with a binary function on the group.
This function provides a concise characterization of binary Parseval group frames, thus
allowing us to find structural constraints for such frames. 
We use these constraints to catalogue coarser equivalence classes of binary Parseval frames obtained from group representations.
We specialize further to abelian groups and deduce more specific design constraints.

The results on the structure of binary Parseval frames have significance for the design of error-correcting codes \cite{MWS77}. There have already been a number of works on real or complex frames as codes
\cite{Marshall84,Marshall89,GKK01,RG03,RG04,HP04,BP05,K}.
We continue  efforts of an earlier paper \cite{BodmannCampMahoney2014} to develop results on binary codes from a frame-theoretic perspective. In that paper, methods from graph theory produced results for the design of such codes. Here, we show that when a frame is obtained from the orbit of a vector under a group action, then the choice of the  group can have a significant impact on the coding performance of the resulting frame: Each binary Parseval $\Z_{p^q}$-frame is switching equivalent to a $\Z_p^q$-frame. Thus, when searching for best performers, the group $\Z_p^q$ is a better choice.
We investigate group representations
of $\Z_p^q$ and $\Z_{p^q}$ for small values of $p$ and $q$ and explicitly determine the best performance for $p=q=3$ and $p=5$, $q=3$.

This paper is organized as follows:

Throughout Section \ref{sec:Preliminaries}, we introduce the relevant definitions  of binary group frames, accompanied by illustrating examples.  Fundamental to the theory of real and complex frames is the fact that every group representation that generates a Parseval group frame is unitary. Section \ref{sec:Preliminaries} concludes with the corresponding binary result, along with an explicit formulation of such representations in terms of the frame vectors and the group algebra. 

Section~\ref{sec:gramStructure} is dedicated to the properties of the Gramians of binary Parseval group frames.
Corollary~\ref{cor:gammaFrameIFFGramInAlgebra} summarizes our first main result, that a binary Parseval frame is a group frame if and only if the Gramian is in the group algebra. We follow this with Theorem~\ref{thm:GramCharacterization}, a characterization of the Gramians of binary Parseval group frames in terms of properties of the Gram matrices. This provides the foundation for a characterization of such Gramians in terms of binary functions on the group, Theorem~\ref{thm:coefficientChar}. We devote the remainder of Section~\ref{sec:gramStructure} to developing design constraints specific to binary Parseval group frames induced by abelian groups, leading to a refined characterization of the associated class of functions on the group. This characterization is used in an algorithm that performs an exhaustive search of all binary Parseval group frames for abelian groups of odd order. We walk through an application of this theory in Example~\ref{ex:classifyZ3up2} in order to motivate the ensuing practical guide to classifying binary Parseval frames for such groups.  

In Section~\ref{sec:framesAsCodes}, we apply the structural insights on binary Parseval group frames to the problem of code design. We draw from the results of the exhaustive search for abelian groups of odd order.
We compare the performance of binary Parseval frames generated by the two groups $\Z_p^q$ and $\Z_{p^q}$. 
The remaining examples in the section demonstrate application of these methods, offering comparisons between $\Z_3^3$ and $\Z_{27}$ and between $\Z_5^3$ and $\Z_{125}$.

\section{Preliminaries}\label{sec:Preliminaries}
Unless otherwise noted, the vectors and matrices in this paper are over the field ${\mathbb Z}_2$ containing the  two elements $0$ and $1$.
We write $I_k$ to indicate the $k \times k$ identity matrix over $\Z_2$, occasionally suppressing the subscript when the dimension would not otherwise be noted. We shall refer to the number of nonzero entries of a vector $x\in\Z_2^n$ as the \emph{weight} of $x$ (sometimes written $\normz{x}$), and we say that $x$ is \emph{odd} or \emph{even} if it has an odd or even number of  entries equal to $1$, respectively. These labels extend naturally to the columns and rows of a matrix  viewed as column and row vectors (for example, we may refer to an \emph{odd} or \emph{even column} of a matrix). In keeping with the notation of real or complex frames, we denote the transpose of a binary  matrix $T$ as $T^*$. 
     
Additionally, we may suppress the range of indices on sums and sets for simplicity of notation, as in writing $\sum_j c_j f_j$ for $\sum_{j \in J} c_j f_j$
or $\{f_j\}$ for $\{f_j\}_{j \in J}$ when the index set $J$ is clear from the context.

\subsection{Binary Frames}
Although the dot product as defined below has the appearance of an inner product, it is not positive definite
because taking the dot product of a non-zero even vector with itself gives zero. 
 
\begin{defn}[$\bindotp{\cdot}{\cdot}$, the dot product on $\Z_2^n$]
    We define the bilinear map $\bindotp{\cdot}{\cdot}:\Z_2^n\times\Z_2^n\to\Z_2$, called the \emph{dot product} on $\Z_2^n$, by 
    \[
        \bindotp{
                \begin{bmatrix}
                    a_1\\ \vdots \\a_n
                \end{bmatrix}
                }
                {
                \begin{bmatrix}
                    b_1\\ \vdots \\b_n
                \end{bmatrix}
                }
        :=\sum_{i=1}^n a_i b_i,
    \]
    compactly expressed as $\bindotp{a}{b}=b^*a$ for vectors $a=[a_i]_{i=1}^n,b=[b_i]_{i=1}^n$. 
    Consistent with the language of inner products, we say that two vectors in $\Z_2^n$ are \emph{orthogonal} if their dot product is equal to zero.
\end{defn}

The absence of an inner product motivated the definition of
binary frames 
 in terms of a characterizing feature of finite real and complex frames, that they are a spanning set for the vector space in which they reside \cite{BodmannLeRezaTobinTomforde2009}.

\begin{defn}[Binary frame, binary Parseval frame]
Let $\Fm=\{f_j\}_{j \in J}$ be  a family of vectors in $\Z_2^n$, indexed by a finite set $J$. If $\Fm$ spans $\Z_2^n$, we call $\Fm$ a (binary) frame; if $\Fm$ satisfies the \emph{reconstruction identity}
    \begin{equation}\label{eq:reconstID}
        x=\sum_{j\in J}\bindotp{x}{f_j}f_j\textSpaceHalf{for all }x\in\Z_2^n,
    \end{equation}
    we say that $\Fm$ is a \emph{binary Parseval frame}.
\end{defn}

For other choices of indefinite bilinear form on vector spaces over $\Z_2$ and associated frames, see~\cite{HotovyLarsonScholze2015}. Here, we restrict ourselves to the canonical choice, the dot product.

Since any family of vectors satisfying \eqref{eq:reconstID} necessarily spans $\Z_2^n$, each Parseval frame $\{f_j\}_{j \in J}$ for $\Z_2^n$ is in fact a frame, and the index set necessarily has the size $|J|\geq n$.  
For classification purposes it is useful to introduce equivalence relations among Parseval frames.

\begin{defn}[Unitary binary matrices, unitary equivalence, switching equivalence] We say that a binary $n\times n$ matrix $U$ is \emph{unitary} if $UU^*=U^*U=I_n$.
Given vector families $\Fm:=\{f_j\}_{j \in J}$ and $\Fm':=\{f'_j\}_{j \in J}$ in $\mathbb Z_2^n$, we say that $\Fm$ is \emph{unitarily equivalent}  to $\Fm'$ if there exists a unitary $U\in M_n(\Z_2)$ such that $f_j'=Uf_j$ for all $j\in J$;  we say that $\Fm$ is \emph{switching equivalent} to $\Fm'$ (written $\Fm\sweq\FmG$) if there exists a unitary $U\in M_n(\Z_2)$ and a permutation $\sigma$ on $J$ such that $f_j'=Uf_{\sigma(j)}$ for all $j\in J$.  
\end{defn}

By definition, unitary equivalence is a refinement of switching equivalence. The nature of unitary and permutation matrices makes verifying that these are both equivalence relations a straightforward exercise. Hereafter, we focus on frames indexed by elements of a group. In this context, a restricted version of switching equivalence is useful, which limits the permutations to the subset preserving the group-structure, that is, group automorphisms.

\begin{defn}[Automorphic switching equivalence]
Let $\Gamma$ be a group, we denote the automorphisms of $\Gamma$ by $\aut(\Gamma)$.
Given vector families $\Fm:=\{f_g\}_{g\in\Gamma}$ and $\Fm':=\{f_g'\}_{g\in\Gamma}$ indexed by  $\Gamma$, we say that $\Fm$ and $\Fm'$ are \emph{automorphically switching equivalent} (written $\Fm\auteq\Fm'$) if there exist a unitary $U\in M_n(\Z_2)$ and an automorphism $\sigma \in \aut(\Gamma)$  such that $f_g=Uf_{\sigma(g)}'$ for all $g\in\Gamma$.
\end{defn}

\subsection{Operators associated with a frame}
The following four operators are defined in the same manner as for finite frames over the fields $\R$ and $\C$. In each definition, $\Fm=\{f_j\}_{j \in J}$ is assumed only to be a frame for $\Z_2^n$. 

\begin{defn}[$\anMtx$, the analysis operator,$\syMtx$, the synthesis operator]
We denote the space of $\mathbb Z_2$-valued functions on a set $J$ by $\mathbb Z_2^J$.
    The \emph{analysis operator} for $\Fm$ is the map $\Theta_{\Fm}: \mathbb Z_2^n \to \mathbb Z_2^J$ given by
    $ (\Theta_{\Fm} x)(j) = \langle x, f_j \rangle $.
    The adjoint of $\Theta_\Fm$, also called \emph{synthesis operator}, maps $h \in   \mathbb Z_2^J$ to $\Theta^*_\Fm h = \sum_{j \in J} h(j) f_j$.
    \end{defn}

\begin{defn}[$\fmOpMtx$, the frame operator]
    The \emph{frame operator} for $\Fm$ is the $n\times n$ matrix
    \[
        \fmOpMtx:=\syMtx\anMtx.
    \]
\end{defn}
\begin{rem}
    We note that the reconstruction identity (equation \eqref{eq:reconstID}) may be written as $x=\syMtx\anMtx x$. 
    The reconstruction property of a Parseval frame $\Fm$ for $\Z_2^n$ is equivalent to $\fmOpMtx=I_n$. 
\end{rem}

\begin{defn}[$\GmMtx$, the Gramian]\label{def:gram}
    The \emph{Gramian} for $\Fm$, usually called the \emph{Gram matrix} if $J = \{1, 2, \dots, k\}$, is the linear map $\GmMtx: \Z_2^J \to \Z_2^J$
    \[
        \GmMtx:=\anMtx\syMtx.
    \]
    
Taking $\delta_j(k)=1$ if $k=j$ and $\delta_j(k)=0$ otherwise, $\{\delta_j\}_{j \in J}$ is the standard basis for $\Z_2^J$, and we use matrix notation to write $(G_{\mathcal F})_{i,j} = \langle \Theta_{\mathcal F} \Theta^*_{\mathcal F} \delta_j , \delta_i \rangle = \langle f_j , f_i\rangle$. It follows from the symmetry of $\bindotp{\cdot}{\cdot}$ that  $(\GmMtx)_{i,j}=(\GmMtx)_{j,i}$, and thus $\GmMtx$ is symmetric ($\GmMtx=\GmMtx^*$). Further, if $\Fm$ is a binary Parseval frame, then the Gramian is idempotent:
\begin{equation}
    \GmMtx^2=(\anMtx\syMtx)(\anMtx\syMtx)
        =\anMtx\underbrace{(\syMtx\anMtx)}_{=\fmOpMtx=I_n}\syMtx
        =\anMtx\syMtx
        =\GmMtx    \, .
\end{equation}
\end{defn}

For each of these matrices, we may suppress the subscript if doing so does not cause ambiguity, simply writing $\anMtx[]$ ,$\syMtx[]$, $\fmOpMtx[]$, and $\GmMtx[]$.

\subsection{Group frames for $\Z_2^n$}
Recall that, given a finite group $\Gamma$ and a vector space $V$, a \emph{representation} of $\Gamma$ on $V$ is a group homomorphism
\[    
        \frep{}: \Gamma\to \GL V.
\]
In such a case, we say that $\Gamma$ \emph{acts on} $V$ by $\frep{}$, and for any group element $g$ we may interchangeably write $\frep{}(g)$ as $\frep{g}$. We shall refer to the elements of $\{\frep{g}\}_{g\in\Gamma}$ as the \emph{matrices of the representation} $\frep{}$, or simply as representation matrices.  Further, if each of the matrices $\frep{g}$ is unitary, then we call the representation itself unitary. 

In the context of complex Hilbert spaces, given a finite group $\Gamma$, a \emph{group frame} generated by $\Gamma$ is any frame $\{f_g\}_{g\in\Gamma}$ that satisfies $\frep{g}f_h=f_{gh}$ for all $g,h\in\Gamma$, for some representation $\frep{}$ of $\Gamma$.  If that representation is unitary, the frame is the orbit of a single vector \cite{Waldron2013}; it is this idea of a group generating a frame from a single vector that provides the basis of our definition.  

\begin{defn}[Binary Parseval group frame, $\Gamma$-frame]
Given a natural number $n$ and a group $\Gamma$ acting on the vector space $\Z_2^n$ by a representation $\frep{}$, let $\Fm:=\{\frep{g}f \}_{g\in\Gamma}$ denote the orbit of a vector $f\in \Z_2^n$ under $\frep{}$. If $\Fm$ spans $\Z_2^n$, then it is a frame which we call a \emph{binary group frame}.  If $\Fm$ is a Parseval frame, we say that it is a \emph{binary Parseval group frame}. 
For a given group $\Gamma$, we abbreviate the description ``group frame generated by $\Gamma$'' as $\Gamma$-frame \cite{Vale2008}. We shall index frame vectors by their inducing group elements, so that $f_e:=f$ and $f_g:=\frep{}(g)f=\frep{g}f$ for $g\in\Gamma$.
\end{defn}
We begin with examples of frames generated with groups of size $27$ acting on $\Z_2^9$. These examples show that depending on the
choice of $f_e$, a unitary group  representation may lead to an orbit that is a Parseval frame or just a frame.

\begin{exs}[Two binary cyclic frames] Let $\Gamma = \Z_{27}$ be the group of integers $\{0,1, \dots, 26\}$ with addition modulo $27$.
 Let $S_9$ be the  cyclic shift on $\Z_2^9$, so for each canonical basis vector $e_i$ with $i \le 8$,  $S_9 e_i = e_{i+1}$ and $S_9e_9 = e_1$.
 Since $S_9$ is a permutation matrix, the map $\rho: i \mapsto S_9^i$ is a homomorphism
 from $ \Gamma$ to $GL(\Z_2^9)$. 
 Choosing $f^*_e = [1\, 0\, 1\, 1\, 1\, 1\, 1\, 1\, 0]$ gives that $\{f_j\}_{j \in \Gamma}$ spans $\Z_2^9$, 
 but $\Theta_\Fm^* \Theta_\Fm \ne I_9$, so $\Fm$ is a frame but not Parseval.
   
 Moreover, choosing $f_e=e_1$ shows that $\{f_i\}_{i \in \Gamma}$ with $f_i = S_9^i e_1 = e_{1 + i\modp{9}}$ 
 and $e_0 \equiv e_9$ repeats the sequence of 
 canonical basis vectors
 three times. Consequently, the synthesis operator is $\Theta^*_{\mathcal F} = \left[ I_9\, I_9\,  I_9 \right]$
 and $\Theta_\Fm^* \Theta_{\Fm} = I_9$, so $\Fm$ is Parseval.
 \end{exs}

An exhaustive search of all Parseval frames obtained from group orbits under the action of $\Z_{27}$  on $\Z_2^9$
reveals that up to unitary equivalence, the second example is the only case of a Parseval $\Z_{27}$-frame for $\Z_2^9$. 
Such an exhaustive search is made feasible by methods developed in Section~\ref{sec:methodsThy}.
In a preceding paper, the linear dependence among repeated frame vectors, as exhibited in the frame having synthesis operator $\left[ I_9\, I_9\,  I_9 \right]$, has been called \emph{trivial redundancy} \cite{BodmannLeRezaTobinTomforde2009};
In the next example, we show that another 
group of the same size generates frames as well as Parseval frames without the occurrence of repeated vectors in either case.
 


\begin{ex}[Two binary Gabor frames]\label{ex:GaborFrames}
Let $a,b\in \GL{\Z_3^3}$ and  the suggestively named $\rho_a,\rho_b\in \GL{\Z_2^9}$ be defined by 
\[
a:=\left[\begin{smallmatrix}
1&0&0\\0&1&1\\0&0&1
\end{smallmatrix}\right] \, , \quad
b:=\left[\begin{smallmatrix}
1&1&0\\0&1&0\\0&0&1
\end{smallmatrix}\right] \, , \quad
\rho_a:=\left[\begin{smallmatrix}
I_3&\Zero&\Zero\\\Zero&X&\Zero\\\Zero&\Zero&Y
\end{smallmatrix}\right]
\, , \quad
\rho_b:=\left[\begin{smallmatrix}
\Zero&I_3&\Zero\\\Zero&\Zero&I_3\\I_3&\Zero&\Zero
\end{smallmatrix}\right],
\]
where $\Zero$ is the $3\times3$ matrix of zeros, $X=\left[\begin{smallmatrix}
0&0&1\\1&0&0\\0&1&0
\end{smallmatrix}\right]$, and $Y=X^2$.

The group generated by $a$ and $b$ under matrix multiplication is the nonabelian finite \emph{Heisenberg-Weyl} group modulo 3, denoted $\HW$.
From the fact that products of powers of $a$ and $b$ give all upper triangular ternary matrices whose diagonal entries are fixed at $1$, one can deduce that this group has order 27, see also \cite{Schwinger1960}. 
The matrices $\rho_a$ and $\rho_b$ generate a group isomorphic to $\HW$ and have been chosen such that setting $\rho(a):=\rho_a$ and $\rho(b):=\rho_b$ extends to an isomorphism $\rho:\HW\to\GL{\Z_2^9}$. For compactness of notation, we designate a third group element\footnote{Note that $\{e,c,c^2\}$ is the center of $\HW$.} $c$ and corresponding $\rho_c\in \GL{\Z_2^9}$
\[
c:=\left[\begin{smallmatrix}
1&0&1\\0&1&0\\0&0&1
\end{smallmatrix}\right]
\textSpace{and}
\rho_c:=\rho(c)=\left[\begin{smallmatrix}
X&\Zero&\Zero\\\Zero&X&\Zero\\\Zero&\Zero&X
\end{smallmatrix}\right],
\]
and order the elements of $\HW$ in the sequence
\[\begin{array}{ccc}
\text{\footnotesize$e$, $a$, $a^2$, $b$, $ab$, $a^2b$, $b^2$, $ab^2$, $a^2b^2$,}\\
\text{\footnotesize$c$, $ac$, $a^2c$, $bc$, $abc$, $a^2bc$, $b^2c$, $ab^2c$, $a^2b^2c$,}\\
\text{\footnotesize$c^2$, $ac^2$, $a^2c^2$, $bc^2$, $abc^2$, $a^2bc^2$, $b^2c^2$, $ab^2c^2$, $a^2b^2c^2$.}
\end{array}\]

Choosing $f_e = [1\, 1\, 0\, 1\, 0\, 0\, 0\, 0\, 0]^*$ and $ f'_e = [1\, 1\, 0\, 1\, 1\, 0\, 1\, 0\, 0]^*$ induces $\HW$-frames $\{f_e\}$ and $\{f_e'\}$ whose synthesis operators $\Theta_1^*:=[f_{e}|f_a|f_{a^2}|\cdots|f_{a^2b^2c^2}]$ and $\Theta_2^*:=[f'_{e}|f'_a|f'_{a^2}|\cdots|f'_{a^2b^2c^2}]$ are given in Figure \ref{fig:GaborFrames}.  One may verify that $\{f'_g\}_{g\in\HW}$ is the only Parseval frame of the pair by calculating the corresponding frame operators.
\begin{figure}[h]
    \centering
\[
\begin{array}{cc}\underset{{\large\mbox{$\Theta_1^*$}}}{
 \left[\begin{smallmatrix}
1 & 1 & 1 & 1 & 1 & 1 & \lite{0} & \lite{0} & \lite{0} & \lite{0} & \lite{0} & \lite{0} & \lite{0} & \lite{0} & \lite{0} & \lite{0} & \lite{0} & \lite{0} & 1 & 1 & 1 & \lite{0} & \lite{0} & \lite{0} & \lite{0} & \lite{0} & \lite{0}\\
1 & 1 & 1 & \lite{0} & \lite{0} & \lite{0} & \lite{0} & \lite{0} & \lite{0} & 1 & 1 & 1 & 1 & 1 & 1 & \lite{0} & \lite{0} & \lite{0} & \lite{0} & \lite{0} & \lite{0} & \lite{0} & \lite{0} & \lite{0} & \lite{0} & \lite{0} & \lite{0}\\
\lite{0} & \lite{0} & \lite{0} & \lite{0} & \lite{0} & \lite{0} & \lite{0} & \lite{0} & \lite{0} & 1 & 1 & 1 & \lite{0} & \lite{0} & \lite{0} & \lite{0} & \lite{0} & \lite{0} & 1 & 1 & 1 & 1 & 1 & 1 & \lite{0} & \lite{0} & \lite{0}\\
1 & \lite{0} & \lite{0} & \lite{0} & \lite{0} & \lite{0} & 1 & \lite{0} & 1 & \lite{0} & \lite{0} & 1 & \lite{0} & \lite{0} & \lite{0} & \lite{0} & 1 & 1 & \lite{0} & 1 & \lite{0} & \lite{0} & \lite{0} & \lite{0} & 1 & 1 & \lite{0}\\
\lite{0} & 1 & \lite{0} & \lite{0} & \lite{0} & \lite{0} & 1 & 1 & \lite{0} & 1 & \lite{0} & \lite{0} & \lite{0} & \lite{0} & \lite{0} & 1 & \lite{0} & 1 & \lite{0} & \lite{0} & 1 & \lite{0} & \lite{0} & \lite{0} & \lite{0} & 1 & 1\\
\lite{0} & \lite{0} & 1 & \lite{0} & \lite{0} & \lite{0} & \lite{0} & 1 & 1 & \lite{0} & 1 & \lite{0} & \lite{0} & \lite{0} & \lite{0} & 1 & 1 & \lite{0} & 1 & \lite{0} & \lite{0} & \lite{0} & \lite{0} & \lite{0} & 1 & \lite{0} & 1\\
\lite{0} & \lite{0} & \lite{0} & 1 & 1 & \lite{0} & 1 & \lite{0} & \lite{0} & \lite{0} & \lite{0} & \lite{0} & \lite{0} & 1 & 1 & \lite{0} & 1 & \lite{0} & \lite{0} & \lite{0} & \lite{0} & 1 & \lite{0} & 1 & \lite{0} & \lite{0} & 1\\
\lite{0} & \lite{0} & \lite{0} & 1 & \lite{0} & 1 & \lite{0} & \lite{0} & 1 & \lite{0} & \lite{0} & \lite{0} & 1 & 1 & \lite{0} & 1 & \lite{0} & \lite{0} & \lite{0} & \lite{0} & \lite{0} & \lite{0} & 1 & 1 & \lite{0} & 1 & \lite{0}\\
\lite{0} & \lite{0} & \lite{0} & \lite{0} & 1 & 1 & \lite{0} & 1 & \lite{0} & \lite{0} & \lite{0} & \lite{0} & 1 & \lite{0} & 1 & \lite{0} & \lite{0} & 1 & \lite{0} & \lite{0} & \lite{0} & 1 & 1 & \lite{0} & 1 & \lite{0} & \lite{0}\\[0.2em]
\end{smallmatrix}
\right]}&\underset{{\large\mbox{ $\Theta_2^*$}}}{
 \left[\begin{smallmatrix}
1 & 1 & 1 & 1 & 1 & 1 & 1 & 1 & 1 & \lite{0} & \lite{0} & \lite{0} & \lite{0} & \lite{0} & \lite{0} & \lite{0} & \lite{0} & \lite{0} & 1 & 1 & 1 & 1 & 1 & 1 & \lite{0} & \lite{0} & \lite{0}\\
1 & 1 & 1 & 1 & 1 & 1 & \lite{0} & \lite{0} & \lite{0} & 1 & 1 & 1 & 1 & 1 & 1 & 1 & 1 & 1 & \lite{0} & \lite{0} & \lite{0} & \lite{0} & \lite{0} & \lite{0} & \lite{0} & \lite{0} & \lite{0}\\
\lite{0} & \lite{0} & \lite{0} & \lite{0} & \lite{0} & \lite{0} & \lite{0} & \lite{0} & \lite{0} & 1 & 1 & 1 & 1 & 1 & 1 & \lite{0} & \lite{0} & \lite{0} & 1 & 1 & 1 & 1 & 1 & 1 & 1 & 1 & 1\\
1 & \lite{0} & 1 & 1 & \lite{0} & \lite{0} & 1 & \lite{0} & 1 & \lite{0} & 1 & 1 & \lite{0} & \lite{0} & 1 & \lite{0} & 1 & 1 & 1 & 1 & \lite{0} & \lite{0} & 1 & \lite{0} & 1 & 1 & \lite{0}\\
1 & 1 & \lite{0} & \lite{0} & 1 & \lite{0} & 1 & 1 & \lite{0} & 1 & \lite{0} & 1 & 1 & \lite{0} & \lite{0} & 1 & \lite{0} & 1 & \lite{0} & 1 & 1 & \lite{0} & \lite{0} & 1 & \lite{0} & 1 & 1\\
\lite{0} & 1 & 1 & \lite{0} & \lite{0} & 1 & \lite{0} & 1 & 1 & 1 & 1 & \lite{0} & \lite{0} & 1 & \lite{0} & 1 & 1 & \lite{0} & 1 & \lite{0} & 1 & 1 & \lite{0} & \lite{0} & 1 & \lite{0} & 1\\
1 & \lite{0} & \lite{0} & 1 & 1 & \lite{0} & 1 & 1 & \lite{0} & \lite{0} & 1 & \lite{0} & \lite{0} & 1 & 1 & \lite{0} & 1 & 1 & \lite{0} & \lite{0} & 1 & 1 & \lite{0} & 1 & 1 & \lite{0} & 1\\
\lite{0} & \lite{0} & 1 & 1 & \lite{0} & 1 & 1 & \lite{0} & 1 & 1 & \lite{0} & \lite{0} & 1 & 1 & \lite{0} & 1 & 1 & \lite{0} & \lite{0} & 1 & \lite{0} & \lite{0} & 1 & 1 & \lite{0} & 1 & 1\\
\lite{0} & 1 & \lite{0} & \lite{0} & 1 & 1 & \lite{0} & 1 & 1 & \lite{0} & \lite{0} & 1 & 1 & \lite{0} & 1 & 1 & \lite{0} & 1 & 1 & \lite{0} & \lite{0} & 1 & 1 & \lite{0} & 1 & 1 & \lite{0}\\[0.2em]
\end{smallmatrix}
\right]}\end{array}.
\]
    \caption{Synthesis operators of two binary Gabor frames (see Ex. \ref{ex:GaborFrames})}
    \label{fig:GaborFrames}
\end{figure}
\end{ex}

\subsection{Regular representations and group frames}\label{sec:regReps}
Constructing a faithful unitary representation of a finite group $\Gamma$ is always possible; if $\Gamma$ has order $k$ and given a $k$-dimensional vector space $V$, one can find a collection of $k\times k$ permutation matrices $\{P_g\}_{g\in\Gamma}\subset \GL{V}$ that form a group isomorphic to $\Gamma$. This is just a result of Cayley's theorem for groups based on the left or right regular representation, as given below. The challenge is to find vector spaces of smaller dimension carrying
a unitary representation and vectors whose orbits under the group action form a Parseval frame.

With this in mind, we recall that the \emph{regular representations} of a finite group $\Gamma$ over a field $\mathbb{K}$ act on $\mathbb{K}^\Gamma$, the vector space of 
 $\bb{K}$-valued functions on $\Gamma$; the \emph{left} regular representation $\lrep{}=\{\lrep{g}\}_{g \in \Gamma}$ and \emph{right} regular representation $\rrep{}
 = \{\rrep{g}\}_{g \in \Gamma}$ 
 act on $\varphi: \Gamma \to \mathbb K$ according to
 \[
\lrep{g}\varphi: h\mapsto \varphi(g^{-1}h)
    \textSpace{and}
    \rrep{g}\varphi: h\mapsto \varphi(h g)
\]
and hence define group isomorphisms. By associativity, $\lrep{g}\rrep{h}\varphi$ and $\rrep{h}\lrep{g}\varphi$ are well defined, and commutativity among operators of the regular representations follows from the chain of equalities
\[
\big(\lrep{g}\rrep{h}\varphi\big)(x)
    =\big(\rrep{h}\varphi\big)(g^{-1}x)
    =\varphi(g^{-1}xh)
    =\big(\lrep{g}\varphi\big)(xh)
    = \big(\rrep{h}\lrep{g}\varphi\big)(x)
\]
which holds for all $g,h,x\in\Gamma$ and $\varphi \in {\mathbb K}^\Gamma$.

\begin{rem}\label{rem:repEntries}
For future use, we note that for each $g\in\Gamma$, the nonzero entries of the permutation matrix associated with $\lrep{g}$ and the canonical basis are indexed by the set $\{(gh,h)\colon h\in\Gamma\}$, and $\rrep{g}$ is nonzero exactly on index set $\{(hg^{-1},h)\colon h\in\Gamma\}$. Written in terms of the Kronecker delta,
\[
(\lrep{g})_{\alpha,\beta}=\delta_{\alpha\beta^{-1}}^g \textSpace{and}(\rrep{g})_{\alpha,\beta}=\delta_{\alpha^{-1}\beta}^g,
\textSpace{ where }
\delta_i^j:=\begin{cases}1 \text{ if } i=j\\ 0 \text{ if } i\neq j   \end{cases}.
\]
\end{rem}
 
We close the section by showing that, as in the real or complex case \cite{Waldron2013}, the group representations which generate binary Parseval group frames are unitary.
\begin{prop}[Binary Parseval group frames are generated by unitary representations]\label{prop:rhoUnitary}
Given a finite group $\Gamma$, let $\Fm$ be a binary $\Gamma$-frame generated by a group representation $\frep{}$. If $\Fm$ is Parseval
with analysis operator $\Theta$, then $\frep{}$ is a unitary representation with matrices explicitly given by $\frep{g}=\Theta^*\lrep{g}\Theta$ for each $g\in\Gamma$.
\end{prop}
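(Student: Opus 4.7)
The plan is to use the reconstruction identity, which for a binary Parseval frame amounts to $\Theta^*\Theta = I_n$, and then exploit the intertwining property $\rho_g f_h = f_{gh}$ built into the definition of a $\Gamma$-frame.

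First I would write, for an arbitrary $x\in\Z_2^n$,
\[
\rho_g x = \rho_g(\Theta^*\Theta x) = \rho_g\sum_{h\in\Gamma}\bindotp{x}{f_h} f_h = \sum_{h\in\Gamma}\bindotp{x}{f_h}\,\rho_g f_h = \sum_{h\in\Gamma}\bindotp{x}{f_h}\,f_{gh},
\]
using linearity of $\rho_g$ and the orbit relation. The substitution $k=gh$, i.e.\ $h=g^{-1}k$, re-indexes this as $\rho_g x = \sum_{k\in\Gamma}\bindotp{x}{f_{g^{-1}k}}\,f_k$.

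Next I would recognize the right-hand side as $\Theta^*\lrep{g}\Theta x$: by definition $(\Theta x)(h)=\bindotp{x}{f_h}$, so by the action of the left regular representation, $(\lrep{g}\Theta x)(k)=(\Theta x)(g^{-1}k)=\bindotp{x}{f_{g^{-1}k}}$, and applying $\Theta^*$ yields exactly $\sum_{k}\bindotp{x}{f_{g^{-1}k}}f_k$. Since $x$ was arbitrary, this gives $\rho_g=\Theta^*\lrep{g}\Theta$.

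For unitarity, I would use that $\lrep{}$ is a permutation representation, hence each $\lrep{g}$ is a permutation matrix whose transpose is its inverse, with $\lrep{g}^*=\lrep{g^{-1}}$. Then
\[
\rho_g^* = (\Theta^*\lrep{g}\Theta)^* = \Theta^*\lrep{g}^*\Theta = \Theta^*\lrep{g^{-1}}\Theta = \rho_{g^{-1}},
\]
and since $\rho$ is a group homomorphism into $\GL{\Z_2^n}$, $\rho_{g^{-1}}=\rho_g^{-1}$. Therefore $\rho_g^*=\rho_g^{-1}$, i.e., $\rho_g$ is unitary.

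There is no real obstacle here; the only thing to be a little careful about is that $\Theta\Theta^*$ (the Gramian) is \emph{not} the identity, so one cannot prove unitarity by multiplying $\rho_g^*\rho_g$ directly and collapsing $\Theta\Theta^*$. The argument must go through the homomorphism property of $\rho$ together with the transpose identity $\lrep{g}^*=\lrep{g^{-1}}$, as above, rather than through $\Theta\Theta^*$.
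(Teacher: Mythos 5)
Your proof is correct and takes essentially the same route as the paper: both arguments establish $\rho_g=\Theta^*\Lambda_g\Theta$ from the Parseval identity $\Theta^*\Theta=I_n$ and then obtain unitarity from $\Lambda_g^*=\Lambda_{g^{-1}}$ combined with the homomorphism property $\rho_{g^{-1}}=\rho_g^{-1}$. The only cosmetic difference is that you derive the formula by expanding $\rho_g x$ via the reconstruction identity and re-indexing the sum, while the paper first proves the intertwining relation $\Lambda_g\Theta=\Theta\rho_{g^{-1}}^*$ and then applies $\Theta^*$ and transposes; your remark about not collapsing $\Theta\Theta^*$ is well taken and consistent with the paper's argument.
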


\begin{proof}
Let $\Gamma$, $\Fm$ and $\Theta$ be as in the hypothesis.  For $g,h\in\Gamma$ and $x \in \mathbb Z_2^n$,
\[
 ( \Lambda_g \Theta x)(h) = \Theta x (g^{-1} h) = \langle x, \rho_{g^{-1}} f_h \rangle = \langle \rho_{g^{-1}}^* x, f_h  \rangle = ( \Theta \rho_{g^{-1}}^* x) (h),
\] 
so the following diagram commutes:
\begin{displaymath}
\xymatrix{
 \Z_2[\Gamma] \ar[r]^{\lrep{g}} & \Z_2[\Gamma] \\
 \Z_2^n \ar[u]^{\anMtx[]}\ar[r]^{(\frep{g^{-1}})^*} & \Z_2^n\ar[u]_{\anMtx[]}
}
\end{displaymath}




By the Parseval property and the intertwining relationship, $\rho_{g^{-1}}^* = \Theta^* \Theta \rho_{g^{-1}}^*=\Theta^* \Lambda_g \Theta$.
Replacing $g$ with $g^{-1}$ and taking the transpose, we then get $\rho_{g} = \Theta^* \Lambda^*_{g^{-1}} \Theta$. 
Next, the unitarity of $\Lambda_g$ gives the claimed expression $\rho_{g} = \Theta^* \Lambda_{g} \Theta$.
It follows that
\begin{align*}
    \rho_{g^{-1}}^* 
        &= \hphantom{(}\Theta^* \Lambda_g \Theta\\
        &= (\Theta^* \Lambda_{g^{-1}} \Theta)^*\\
        &=(\rho_g^*)^* 
            \equiv\rho_g. 
\end{align*}
We conclude that each $\rho_g$ is unitary.
\end{proof}

\begin{ex}[A binary Parseval $\Z_3^2$-frame]\label{ex:z3up2Frame} The family of vectors and matrix 
\[
\Fm=
\lrphantombrackets{\{}{\}}{\begin{smallmatrix}
0\\0\\0\\0\\0\\\\0
\end{smallmatrix}}{\!\!
\underset{f_{\!\tiny\nCkGen{0\\0}}}{
\left[\begin{smallmatrix}
0\\1\\0\\0\\0\end{smallmatrix}
\right]},\underset{f_{\!\tiny\nCkGen{0\\1}}}{
\left[\begin{smallmatrix}
0\\0\\1\\0\\0\end{smallmatrix}
\right]},\underset{f_{\!\tiny\nCkGen{0\\2}}}{
\left[\begin{smallmatrix}
0\\0\\0\\1\\0\end{smallmatrix}
\right]},\underset{f_{\!\tiny\nCkGen{1\\0}}}{
\left[\begin{smallmatrix}
0\\1\\1\\0\\1\end{smallmatrix}
\right]},\underset{f_{\!\tiny\nCkGen{1\\1}}}{
\left[\begin{smallmatrix}
0\\0\\1\\1\\1\end{smallmatrix}
\right]},\underset{f_{\!\tiny\nCkGen{1\\2}}}{
\left[\begin{smallmatrix}
0\\1\\0\\1\\1\end{smallmatrix}
\right]},\underset{f_{\!\tiny\nCkGen{2\\0}}}{
\left[\begin{smallmatrix}
1\\1\\0\\1\\0\end{smallmatrix}
\right]},\underset{f_{\!\tiny\nCkGen{2\\1}}}{
\left[\begin{smallmatrix}
1\\1\\1\\0\\0\end{smallmatrix}
\right]},\underset{f_{\!\tiny\nCkGen{2\\2}}}{
\left[\begin{smallmatrix}
1\\0\\1\\1\\0\end{smallmatrix}
\right]}
\!\!}
\textSpace{and}
G=\left[\begin{smallmatrix}
      1  & \lite{0} & \lite{0} &       1  & \lite{0} &       1  &       1  &       1  & \lite{0}\\
\lite{0} &       1  & \lite{0} &       1  &       1  & \lite{0} & \lite{0} &       1  &       1 \\
\lite{0} & \lite{0} &       1  & \lite{0} &       1  &       1  &       1  & \lite{0} &       1 \\
      1  &       1  & \lite{0} &       1  & \lite{0} & \lite{0} &       1  & \lite{0} &       1 \\
\lite{0} &       1  &       1  & \lite{0} &       1  & \lite{0} &       1  &       1  & \lite{0}\\
      1  & \lite{0} &       1  & \lite{0} & \lite{0} &       1  & \lite{0} &       1  &       1 \\
      1  & \lite{0} &       1  &       1  &       1  & \lite{0} &       1  & \lite{0} & \lite{0}\\
      1  &       1  & \lite{0} & \lite{0} &       1  &       1  & \lite{0} &       1  & \lite{0}\\
\lite{0} &       1  &       1  &       1  & \lite{0} &       1  & \lite{0} & \lite{0} &       1 
\end{smallmatrix}
\right]
\]
are a binary Parseval $\Z_3^2$ frame and its Gramian.
Denoting the left regular representation of $\Z_3$ as $\frep{}'$ with $\frep{1}':=\left[\begin{smallmatrix}
0&0&1\\1&0&0\\0&1&0
\end{smallmatrix}\right]$, the left regular representation matrices of $\Z_3^2$ are defined by the Kronecker products $\lrep{\nCksm{i}{j}}=\frep{i}'\otimes\frep{j}'$,
with $\rho'_i = (\rho'_1)^i$ for $i \in {\mathbb Z}_2$. The corresponding matrices $\frep{}\nCksm{i}{j}:=\syMtx[]\lrep{\nCksm{i}{j}}\anMtx[]$ provide a representation of the group $\Z_3^2$ on the vector space $\Z_2^5$. 

\[
\begin{array}{ccc}\rho\colVecsm{0\\0}=\underset{}{
\left[\begin{smallmatrix}
      1  & \lite{0} & \lite{0} & \lite{0} & \lite{0}\\
\lite{0} &       1  & \lite{0} & \lite{0} & \lite{0}\\
\lite{0} & \lite{0} &       1  & \lite{0} & \lite{0}\\
\lite{0} & \lite{0} & \lite{0} &       1  & \lite{0}\\
\lite{0} & \lite{0} & \lite{0} & \lite{0} &       1 
\end{smallmatrix}
\right]},&\rho\colVecsm{0\\1}=\underset{}{
\left[\begin{smallmatrix}
      1  & \lite{0} & \lite{0} & \lite{0} & \lite{0}\\
\lite{0} & \lite{0} & \lite{0} &       1  & \lite{0}\\
\lite{0} &       1  & \lite{0} & \lite{0} & \lite{0}\\
\lite{0} & \lite{0} &       1  & \lite{0} & \lite{0}\\
\lite{0} & \lite{0} & \lite{0} & \lite{0} &       1 
\end{smallmatrix}
\right]},&\rho\colVecsm{0\\2}=\underset{}{
\left[\begin{smallmatrix}
      1  & \lite{0} & \lite{0} & \lite{0} & \lite{0}\\
\lite{0} & \lite{0} &       1  & \lite{0} & \lite{0}\\
\lite{0} & \lite{0} & \lite{0} &       1  & \lite{0}\\
\lite{0} &       1  & \lite{0} & \lite{0} & \lite{0}\\
\lite{0} & \lite{0} & \lite{0} & \lite{0} &       1 
\end{smallmatrix}
\right]},\\\rho\colVecsm{1\\0}=\underset{}{
\left[\begin{smallmatrix}
\lite{0} & \lite{0} & \lite{0} & \lite{0} &       1 \\
      1  &       1  & \lite{0} &       1  & \lite{0}\\
      1  &       1  &       1  & \lite{0} & \lite{0}\\
      1  & \lite{0} &       1  &       1  & \lite{0}\\
\lite{0} &       1  &       1  &       1  & \lite{0}
\end{smallmatrix}
\right]},&\rho\colVecsm{1\\1}=\underset{}{
\left[\begin{smallmatrix}
\lite{0} & \lite{0} & \lite{0} & \lite{0} &       1 \\
      1  & \lite{0} &       1  &       1  & \lite{0}\\
      1  &       1  & \lite{0} &       1  & \lite{0}\\
      1  &       1  &       1  & \lite{0} & \lite{0}\\
\lite{0} &       1  &       1  &       1  & \lite{0}
\end{smallmatrix}
\right]},&\rho\colVecsm{1\\2}=\underset{}{
\left[\begin{smallmatrix}
\lite{0} & \lite{0} & \lite{0} & \lite{0} &       1 \\
      1  &       1  &       1  & \lite{0} & \lite{0}\\
      1  & \lite{0} &       1  &       1  & \lite{0}\\
      1  &       1  & \lite{0} &       1  & \lite{0}\\
\lite{0} &       1  &       1  &       1  & \lite{0}
\end{smallmatrix}
\right]},\\\rho\colVecsm{2\\0}=\underset{}{
\left[\begin{smallmatrix}
\lite{0} &       1  &       1  &       1  & \lite{0}\\
\lite{0} &       1  &       1  & \lite{0} &       1 \\
\lite{0} & \lite{0} &       1  &       1  &       1 \\
\lite{0} &       1  & \lite{0} &       1  &       1 \\
      1  & \lite{0} & \lite{0} & \lite{0} & \lite{0}
\end{smallmatrix}
\right]},&\rho\colVecsm{2\\1}=\underset{}{
\left[\begin{smallmatrix}
\lite{0} &       1  &       1  &       1  & \lite{0}\\
\lite{0} &       1  & \lite{0} &       1  &       1 \\
\lite{0} &       1  &       1  & \lite{0} &       1 \\
\lite{0} & \lite{0} &       1  &       1  &       1 \\
      1  & \lite{0} & \lite{0} & \lite{0} & \lite{0}
\end{smallmatrix}
\right]},&\rho\colVecsm{2\\2}=\underset{}{
\left[\begin{smallmatrix}
\lite{0} &       1  &       1  &       1  & \lite{0}\\
\lite{0} & \lite{0} &       1  &       1  &       1 \\
\lite{0} &       1  & \lite{0} &       1  &       1 \\
\lite{0} &       1  &       1  & \lite{0} &       1 \\
      1  & \lite{0} & \lite{0} & \lite{0} & \lite{0}
\end{smallmatrix}
\right]}.\end{array}
\]

One may verify that the induced map $\frep{}:\Z_3^2\to\GL{\Z_2^5}$ is a unitary representation and that $\Fm$ is in fact the orbit of $f_{\!\scriptsize\colVecsm{0\\0}}$ under $\frep{}$.
\end{ex}

\section{The structure of the Gramian of a binary Parseval group frame}\label{sec:gramStructure}
The Gramian captures geometric information about the structure of the associated frame, since it records every pairwise dot product among frame vectors.  If the frame
is a group frame, then it also reflects the group structure. As shown below, the Gramian of a binary Parseval group frame is an element of the algebra
generated by the right regular representation.

\begin{thm}[Gramians of binary Parseval group frames as elements of the group algebra]\label{thm:gramInAlgebra}
Let \linebreak $\Gamma$ be a finite group with right regular representation
$\{\rrep{g}\}_{g\in\Gamma}$ and associated group algebra
$\Z_2[\{\rrep{g}\}]$, and suppose $G$ is the Gramian of binary Parseval
$\Gamma$-frame $\Fm:=\{f_g\}_{g\in\Gamma}$, then the Gram matrix is in
the group algebra. More explicitly, $G$ is given by
\begin{equation}
  G=\sum_{g\in\Gamma}\eta(g)\rrep{g}
\end{equation}
where the function $\eta:\Gamma\to\Z_2$ is defined by $\eta(g):=\bindotp{f_g}{f_e}$.
\end{thm}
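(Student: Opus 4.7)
The strategy is to compute the entries of $G$ directly, exploit the unitarity of the representation established in Proposition~\ref{prop:rhoUnitary} to re-express each pairwise dot product as a value of $\eta$, and then match those values against the entries of the right regular representation given by Remark~\ref{rem:repEntries}. Since both the Gramian and the right regular representation are naturally indexed by $\Gamma$, the proof is essentially an entrywise verification; the only thing to be careful about is the order in which group elements appear once the $*$-operation is converted to the inverse.

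First, I would fix $h,k\in\Gamma$ and use Definition~\ref{def:gram} to write $G_{h,k}=\langle f_k,f_h\rangle$. Invoking $f_g=\rho_g f_e$ together with the unitarity relation $\rho_g^{*}=\rho_{g^{-1}}$ from Proposition~\ref{prop:rhoUnitary}, I would collapse
\[
G_{h,k}=\langle \rho_k f_e,\rho_h f_e\rangle=\langle \rho_h^{*}\rho_k f_e,f_e\rangle=\langle \rho_{h^{-1}k}f_e,f_e\rangle=\langle f_{h^{-1}k},f_e\rangle=\eta(h^{-1}k).
\]
This identifies $G$ with the matrix whose $(h,k)$-entry is $\eta(h^{-1}k)$, a function of the ``difference'' $h^{-1}k$ of its indices, which already signals membership in the right regular algebra.

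Second, I would apply Remark~\ref{rem:repEntries}, which states $(R_g)_{\alpha,\beta}=\delta^g_{\alpha^{-1}\beta}$, to compute entrywise
\[
\Bigl(\sum_{g\in\Gamma}\eta(g)R_g\Bigr)_{h,k}=\sum_{g\in\Gamma}\eta(g)\,\delta^g_{h^{-1}k}=\eta(h^{-1}k).
\]
Comparing with the first display yields equality of matrices, which is exactly the claimed representation $G=\sum_{g\in\Gamma}\eta(g)R_g$ in the group algebra $\mathbb{Z}_2[\{R_g\}]$.

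The argument has no genuine obstacle beyond bookkeeping: the main thing to watch is that ``transpose equals inverse'' is applied on the correct factor so that $h^{-1}k$ (and not $kh^{-1}$ or $hk^{-1}$) appears, since this is precisely the combination that the right regular representation picks out in Remark~\ref{rem:repEntries}. Once the two entrywise expressions are written side by side, the theorem follows immediately, and as a byproduct the coefficient function is uniquely determined, $\eta(g)=\langle f_g,f_e\rangle$, since $\{R_g\}_{g\in\Gamma}$ is linearly independent in the group algebra.
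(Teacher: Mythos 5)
Your proposal is correct and follows essentially the same route as the paper: both compute the $(h,k)$-entry of $G$ as $\eta(h^{-1}k)$ using the unitarity $\rho_g^*=\rho_{g^{-1}}$ from Proposition~\ref{prop:rhoUnitary} and match it with the entry of $\sum_g\eta(g)\rrep{g}$ via Remark~\ref{rem:repEntries}. The only difference is cosmetic (you go from $G$ to the sum, the paper goes from the sum to $G$), so no further comparison is needed.
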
 
\begin{proof}
Let $\frep{}$ be the frame-generating group representation, which by Proposition \ref{prop:rhoUnitary} is a unitary representation of $\Gamma$.
Consider $H = \sum_{g \in \Gamma} \eta(g) R_g$ with $\eta$ as in the statement of the theorem.
We compute for $a,b \in \Gamma$ the value 
\begin{align*} H_{a,b} & = \sum_{g \in \Gamma} \eta(g) (R_g)_{a,b} \\
& = \sum_{g \in \Gamma } \eta(g) \delta_{a^{-1} b}^g = \eta(a^{-1} b)\\
 & = \langle f_{a^{-1} b}, f_e \rangle \\
 & = \langle \rho_b f_e, \rho_{a^{-1}}^* f_e \rangle \\
 & = \langle f_b, f_a \rangle = G_{a,b} \, .
 \end{align*}
 In the last identity, we have used the unitarity, $\rho_{a^{-1}} = \rho_a^*$.
 \end{proof}
 \begin{thm}[Gramians of binary Parseval frames in a group algebra imply group frame structure]
 Let $\Gamma$ be a finite group with regular representations $\lrep{}$ and $\rrep{}$, and suppose
$\Fm$ is a binary Parseval frame indexed by $g \in \Gamma$ with Gramian $G$ and analysis operator $\anMtx[]$. If $G$ is in the group algebra $\Z_2[\{\rrep{g}\}]$, then
$\rho_g := \syMtx[] \lrep{g} \anMtx[]$ defines a unitary representation of $\Gamma$ and $\{\frep{g}f_e\}_{g \in \Gamma}=\{f_g\}_{g \in \Gamma}$ is a $\Gamma$-frame.
 \end{thm}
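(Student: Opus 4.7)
The plan is to verify three items in order: (i) $g \mapsto \rho_g := \Theta^* \Lambda_g \Theta$ is a group homomorphism, (ii) each $\rho_g$ is unitary, and (iii) $\rho_g f_e = f_g$ for every $g \in \Gamma$. The engine driving the whole argument is the commutativity $\Lambda_g R_h = R_h \Lambda_g$ recorded just before Remark~\ref{rem:repEntries}, which, combined with the hypothesis $G \in \Z_2[\{R_g\}_{g\in\Gamma}]$, gives $\Lambda_g G = G \Lambda_g$ for every $g$. Throughout I freely use the Parseval identities $\Theta^*\Theta = I$ and $G = \Theta\Theta^*$.

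For (i), the computation is a short chain:
\[ \rho_g \rho_h = \Theta^* \Lambda_g (\Theta \Theta^*) \Lambda_h \Theta = \Theta^* \Lambda_g G \Lambda_h \Theta = \Theta^* G \Lambda_g \Lambda_h \Theta = \Theta^* (\Theta \Theta^*) \Lambda_{gh} \Theta = \rho_{gh}. \]
For (ii), observe $\rho_g^* = \Theta^* \Lambda_g^* \Theta = \Theta^* \Lambda_{g^{-1}} \Theta = \rho_{g^{-1}}$, so the homomorphism property yields $\rho_g \rho_g^* = \rho_e = \Theta^*\Theta = I$, and analogously $\rho_g^* \rho_g = I$.

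Item (iii) is the step requiring care. Following the proof of Theorem~\ref{thm:gramInAlgebra}, the decomposition $G = \sum_g \eta(g) R_g$ forces $\eta(g) = \langle f_g, f_e \rangle$, and the automatic symmetry $G = G^*$ yields $\eta(g) = \eta(g^{-1})$. Applying $\Theta$ to $\rho_g f_e$ and using $\Lambda G = G \Lambda$ with Parseval,
\[ \Theta \rho_g f_e = G \Lambda_g \Theta f_e = \Lambda_g G \Theta f_e = \Lambda_g \Theta f_e, \]
and coordinate-wise, $(\Lambda_g \Theta f_e)(h) = \eta(g^{-1}h) = \eta(h^{-1}g) = \langle f_g, f_h \rangle = (\Theta f_g)(h)$. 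Thus $\Theta \rho_g f_e = \Theta f_g$, and applying $\Theta^*$ together with $\Theta^* \Theta = I$ gives $\rho_g f_e = f_g$. Combined with (i), this upgrades to $\rho_g f_h = f_{gh}$, so the orbit $\{\rho_g f_e\}_{g\in\Gamma}$ coincides with $\{f_g\}_{g\in\Gamma}$, which is a frame by hypothesis.

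The main obstacle is the bookkeeping in (iii), where one must keep straight the convention $G_{i,j} = \langle f_j, f_i \rangle$, the direction of the left action $(\Lambda_g \varphi)(h) = \varphi(g^{-1}h)$, and the symmetry $\eta(g) = \eta(g^{-1})$ that finally reconciles the two sides of the coordinate-wise comparison. Once these indexing issues are aligned, the verification collapses to the manipulations above, which use nothing beyond $\Lambda G = G \Lambda$ and Parseval.
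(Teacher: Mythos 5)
Your proof is correct and follows essentially the same route as the paper: the homomorphism property and unitarity of $\rho_g=\Theta^*\Lambda_g\Theta$ are obtained exactly as in the paper's proof, from the commutation of $\Lambda_g$ with $G\in\Z_2[\{R_g\}]$ together with $\Theta^*\Theta=I$. The only difference is the final orbit identity, which the paper gets in one line from $f_e=\Theta^*\delta_e$ and $\Lambda_g\delta_e=\delta_g$, whereas you verify $\rho_g f_e=f_g$ coordinate-wise through the coefficient function $\eta$ and the symmetry of $G$ --- a slightly longer but equally valid detour.
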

 \begin{proof}
 Assume that $G=\anMtx[]\syMtx[] \in \mathbb Z_2 [ \{R_g \}]$.
Since $\Lambda_g $ and $R_{g'}$ commute for  each $g, g' \in \Gamma$, so do $\Lambda_g$ and $G$.  From $\Theta^* \Theta \Theta^* = \Theta^*$, then, we have $\Theta^* \Lambda_g \Theta \Theta^* \Lambda_h \Theta = \Theta^* \Lambda_{gh} \Theta$ for each $g, h \in \Gamma$, and since $(\Theta^* \Lambda_g \Theta)^* = \Theta^* \Lambda_{g^{-1}} \Theta$, it follows that
$\rho_g = \Theta^* \Lambda_g \Theta$ defines a unitary representation of $\Gamma$. Using these properties for $\rho_g$ then
shows that $$\rho_g f_e = \Theta^* \Lambda_g \Theta \Theta^* \delta_e = \Theta^* \Theta \Theta^* \Lambda_g \delta_e = \Theta^* \delta_g = f_g \, ,$$
so the frame vectors are obtained from the orbit under the unitaries $\{\rho_g\}_{g \in \Gamma}$. 
 \end{proof}
 
 We summarize the preceding two theorems in a characterization of
 binary Parseval group frames.
 
 \begin{cor}[Characterization of binary Parseval group frames in terms of Gramians]\label{cor:gammaFrameIFFGramInAlgebra}
 Let $\Gamma$ be a finite group with right regular representation  $\{\rrep{g}\}_{g\in\Gamma}$.
 A binary Parseval frame $\mathcal F$ indexed by $\Gamma$ is a $\Gamma$-frame if and only if
 its Gramian is in the algebra $\Z_2[\{\rrep{g}\}_{g\in\Gamma}]$.
 \end{cor}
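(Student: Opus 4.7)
The plan is to obtain this corollary as an essentially immediate consequence of the two preceding theorems, since together they already establish both directions of the biconditional. I would begin the proof with the simple observation that the statement is a reformulation and combination of Theorem~\ref{thm:gramInAlgebra} and the theorem immediately preceding the corollary, then cite each in turn.

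For the forward implication, I would invoke Theorem~\ref{thm:gramInAlgebra} directly: if $\mathcal{F} = \{f_g\}_{g \in \Gamma}$ is a binary Parseval $\Gamma$-frame, then its Gramian has the explicit expansion $G = \sum_{g \in \Gamma} \eta(g) R_g$ with $\eta(g) = \langle f_g, f_e \rangle$, which manifestly places $G$ in $\mathbb{Z}_2[\{R_g\}_{g \in \Gamma}]$. For the reverse implication, I would cite the theorem stated just before the corollary: if $\mathcal{F}$ is a binary Parseval frame indexed by $\Gamma$ whose Gramian lies in $\mathbb{Z}_2[\{R_g\}]$, then defining $\rho_g := \Theta^* \Lambda_g \Theta$ yields a unitary representation of $\Gamma$ with $\rho_g f_e = f_g$, so $\mathcal{F}$ is a $\Gamma$-frame as defined in Section~\ref{sec:Preliminaries}.

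The only real obstacle, which is quite minor, is ensuring that the notion of $\Gamma$-frame used in the corollary statement matches the one produced by the reverse-direction theorem, namely that the frame vectors are indeed the orbit of $f_e$ under a representation of $\Gamma$. Since the preceding theorem constructs precisely such a representation and verifies $\rho_g f_e = f_g$, this alignment is automatic. I would close with a one-line remark noting that the two theorems exhaust both implications, so the characterization follows without further argument.
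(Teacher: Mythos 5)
Your proposal is correct and matches the paper exactly: the paper explicitly introduces the corollary with the phrase ``We summarize the preceding two theorems,'' so its (implicit) proof is precisely your combination of Theorem~\ref{thm:gramInAlgebra} for the forward direction and the subsequent theorem for the converse, including the observation that the constructed representation $\rho_g = \Theta^*\Lambda_g\Theta$ realizes the frame as the orbit of $f_e$.
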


\subsection{Characterizing the structure of the Gramian}
In order to facilitate a catalogue of binary Parseval group frames, we identify necessary and sufficient conditions for their Gramians.

In the real or complex case, each symmetric idempotent matrix is the Gram matrix of a Parseval frame.
In the binary case, \cite[Theorem 4.1]{bgbOdd} characterizes Parseval frames with the additional requirement that
at least one row or column vector is odd. This condition is equivalent to the condition that the Gramian has at least one odd vector in its range, since the span of the column vectors of a matrix forms the range of the matrix; we use these statements interchangeably throughout this paper. 
This condition is \emph{also} equivalent to that of having at least one nonzero entry on the diagonal, since the idempotence and symmetry of a Gramian $G$ induce the identity between the dot product of a vector $G\delta_g$ with itself and the corresponding diagonal entry of the Gramian,  $ \bindotp{G\delta_g}{G\delta_g}=G_{g,g}$ for all $g\in\Gamma$. 

We next combine the results we obtained so far with the characterization of the Gramians of binary Parseval frames to characterize Gramians that belong to binary Parseval group frames.
 \begin{thm}[The structure of Gramians of binary Parseval group frames]\label{thm:GramCharacterization}
 Given a finite group $\Gamma$ with right regular representation $\rrep{}$, a map $G:\Z_2^\Gamma\to\Z_2^\Gamma$ is the Gramian of binary Parseval $\Gamma$-frame
 if and only if $G$ is symmetric and idempotent,
$G\in\Z_2[\{\rrep{g}\}]$ and the range of $G$ contains an odd vector.
\end{thm}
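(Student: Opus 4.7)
The plan is to assemble the theorem from two prior results: the general characterization of Gramians of binary Parseval frames from \cite[Theorem 4.1]{bgbOdd} (a symmetric idempotent with at least one odd vector in the range is exactly a Gramian of some binary Parseval frame, and conversely), together with Corollary \ref{cor:gammaFrameIFFGramInAlgebra}, which records that a binary Parseval frame indexed by $\Gamma$ is a $\Gamma$-frame exactly when its Gramian lies in $\Z_2[\{\rrep{g}\}]$. Nothing new needs to be proved; the task is to stitch these two characterizations together in both directions and to check that the identifications required are canonical.

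For the forward implication, I would start with the Gramian $G$ of a binary Parseval $\Gamma$-frame $\Fm=\{f_g\}_{g\in\Gamma}$. Symmetry and idempotence of $G=\anMtx\syMtx$ are immediate from Definition \ref{def:gram}. The odd-range condition follows from the identity $\bindotp{G\delta_g}{G\delta_g}=G_{g,g}$ noted just before the theorem, combined with the Parseval-frame characterization from \cite[Theorem 4.1]{bgbOdd}, which forces at least one diagonal entry of $G$ to be nonzero (equivalently, the range contains an odd vector). Membership $G\in\Z_2[\{\rrep{g}\}]$ is then precisely the content of Theorem \ref{thm:gramInAlgebra}, with the explicit expansion $G=\sum_{g\in\Gamma}\eta(g)\rrep{g}$ where $\eta(g)=\bindotp{f_g}{f_e}$.

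For the converse, suppose $G\colon\Z_2^\Gamma\to\Z_2^\Gamma$ is symmetric, idempotent, has an odd vector in its range, and lies in $\Z_2[\{\rrep{g}\}]$. By \cite[Theorem 4.1]{bgbOdd}, the first three conditions produce a binary Parseval frame $\Fm$ whose Gramian is $G$; the standard construction factors $G=\anMtx\syMtx$ and reads off the frame vectors as the columns of $\syMtx$, which are naturally labeled by the standard basis $\{\delta_g\}_{g\in\Gamma}$ of $\Z_2^\Gamma$. Thus $\Fm$ is canonically indexed by $\Gamma$, and Corollary \ref{cor:gammaFrameIFFGramInAlgebra} upgrades $\Fm$ to a binary Parseval $\Gamma$-frame, since its Gramian $G$ lies in the group algebra.

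Since both ingredients have already been established, I do not anticipate any substantive obstacle. The only minor subtlety is the indexing alignment in the converse: one must identify the index set produced by the factorization $G=\anMtx\syMtx$ with $\Gamma$ itself so that Corollary \ref{cor:gammaFrameIFFGramInAlgebra} applies verbatim. Since $G$ acts on $\Z_2^\Gamma$ and its columns are labeled by $\{\delta_g\}_{g\in\Gamma}$, this identification is canonical, and no additional argument is required.
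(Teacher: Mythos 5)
Your proposal is correct and follows the same route as the paper: both reduce the theorem to the characterization of binary Parseval frame Gramians in \cite[Theorem 4.1]{bgbOdd} combined with Corollary \ref{cor:gammaFrameIFFGramInAlgebra} (equivalently Theorem \ref{thm:gramInAlgebra} and its converse). Your write-up simply spells out the two directions and the indexing identification more explicitly than the paper's one-line argument.
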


\begin{proof}
    As noted above, \cite[Theorem 4.1]{bgbOdd} characterizes the Gram matrices of binary Parseval frames as symmetric, idempotent matrices having at least one odd column. Thus, given a finite group $\Gamma$, the characterization in the current theorem reduces to Corollary \ref{cor:gammaFrameIFFGramInAlgebra}, and is thereby proven.
\end{proof}

\subsection{Additional properties of the Gramian}
Since regular representation matrices are permutation matrices, a consequence of Theorem \ref{thm:gramInAlgebra} is that each of the rows of the Gramian of a binary Parseval group frame has the same weight. Thus, if a Gramian is assumed to be that of a binary Parseval group frame, the condition that one column is odd is equivalent to the condition that \emph{every} column is odd, which equates to the condition that every diagonal entry is a $1$, or even simply that $\eta(e)=1$. Continuing under the assumption that the Gramian may be written as $G=\Sigma_g \eta(g)\rrep{g}$, the quantity of $1$'s in a column is the quantity of elements $g\in\Gamma$ such that $\eta(g)=1$; it follows that $G$ has an odd column if and only if the sum $\Sigma_g \eta(g)=1$.

Now, suppose $\Gamma$ is a finite group of order $k$ and that we wish to exhaustively search for $\Gamma$-frames. The characterization in Theorem \ref{thm:GramCharacterization} tells us that the candidate set of Gramians is a subset of 
\begin{equation}\label{set:etaProps}
\left\{H= \sum_{g\in\Gamma}\eta(g)\rrep{g}\;
\begin{array}{|l}
\eta(e)=1\\
\eta(g)=\eta(g^{-1}) \text{ for all }g\in\Gamma\\
\sum_g\eta(g) =1
\end{array} \right\}.
\end{equation}
From a computational standpoint, the three necessary criteria are easy to check as properties of the coefficient function $\eta$; in fact, no matrix multiplication is required until we wish to check idempotence. The following proposition reduces the idempotence condition to a property of $\eta$ as well.
\begin{prop}[Idempotence  in group algebra characterized by convolution identity]\label{prop:convInvariance}
Given a finite group $\Gamma$ with right regular representation $\{\rrep{g}\}_{g\in \Gamma}$ and a binary function $\eta:\Gamma\to\Z_2$, the matrix $\sum_g \eta(g)\rrep{g}$ is idempotent if and only if $\eta$ is invariant under convolution with itself; that is, if and only if $\eta(h)=\eta\convl \eta(h):=\sum_g \eta(g)\eta(g^{-1}h)$ for each $h\in\Gamma$.
\end{prop}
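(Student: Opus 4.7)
The plan is to exploit the fact that the right regular representation is a group homomorphism, turning matrix multiplication in $\Z_2[\{R_g\}]$ into group multiplication, which will translate squaring into convolution.

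First, I would directly compute the square, using that $R_g R_{g'}\varphi(x) = R_{g'}\varphi(xg) = \varphi(xgg') = R_{gg'}\varphi(x)$, so $R_g R_{g'} = R_{gg'}$:
\[
\Bigl(\sum_{g \in \Gamma} \eta(g) R_g\Bigr)^2 = \sum_{g, g' \in \Gamma} \eta(g)\eta(g') R_{g g'}.
\]
Then I would reindex the double sum by setting $h = gg'$, so that $g' = g^{-1} h$, which gives
\[
\Bigl(\sum_{g \in \Gamma} \eta(g) R_g\Bigr)^2 = \sum_{h \in \Gamma} \Bigl( \sum_{g \in \Gamma} \eta(g)\eta(g^{-1} h) \Bigr) R_h = \sum_{h \in \Gamma} (\eta \convl \eta)(h)\, R_h.
\]

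The idempotence condition $\bigl(\sum_g \eta(g) R_g\bigr)^2 = \sum_g \eta(g) R_g$ then becomes the equality $\sum_h (\eta\convl\eta)(h) R_h = \sum_h \eta(h) R_h$ of elements in the group algebra. At this point the remaining step is to pass from this matrix identity to an identity of coefficient functions, which requires the linear independence of $\{R_g\}_{g \in \Gamma}$ over $\Z_2$.

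This linear independence is the one point that needs a word of justification, and it is immediate from Remark~\ref{rem:repEntries}: since $(R_g)_{\alpha,\beta} = \delta^g_{\alpha^{-1}\beta}$, the matrix $R_g$ is nonzero in entry $(\alpha,\beta)$ precisely when $\alpha^{-1}\beta = g$, so distinct $R_g$ have disjoint supports as matrices. Consequently, any $\Z_2$-linear combination $\sum_h c_h R_h$ is uniquely determined by its coefficient function $h \mapsto c_h$. Applying this to both sides yields $(\eta\convl\eta)(h) = \eta(h)$ for every $h \in \Gamma$, and the reverse implication follows by reading the same computation in the opposite direction. I do not anticipate any serious obstacle here; the whole argument is a one-line translation once one notices that squaring in the group algebra is convolution of coefficient functions.
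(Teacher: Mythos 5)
Your proof is correct and takes essentially the same route as the paper's: square the sum using $R_g R_{g'} = R_{gg'}$, reindex the double sum to exhibit the convolution $\eta \convl \eta$ as the coefficient function of the square, and compare coefficients in both directions. The only difference is that you explicitly justify passing from the matrix identity to the identity of coefficient functions via the linear independence of $\{\rrep{g}\}_{g\in\Gamma}$ (disjoint supports, per Remark~\ref{rem:repEntries}), a step the paper leaves implicit.
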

\begin{proof}

Let $\Gamma$ and $\{\rrep{g}\}_{g\in \Gamma}$ be as given above and $\eta:\Gamma\to\Z_2$ be a binary function. We note that
\begin{equation}\label{eq:squareConv}
    \left(\vphantom{\sum}\right.\sum_{g\in \Gamma} \eta(g)\rrep{g}\left.\vphantom{\sum}\right)^2
        =\sum_{g_1,g_2\in \Gamma} \eta(g_1)\eta(g_2)\rrep{g_1g_2}
        =\sum_{g,h\in \Gamma} \eta(g)\eta(g^{-1}h)\rrep{h};
\end{equation}
it follows that $\sum \eta(g)\rrep{g}=\big(\sum \eta(g)\rrep{g}\big)^2$ implies $\eta(h)=\sum_g \eta(g)\eta(g^{-1}h)$ for each $h\in \Gamma$.
On the other hand, suppose $\eta:\Gamma\to\Z_2$ is convolution invariant. Then
\[
\sum_{h\in \Gamma} \eta(h)\rrep{h} 
    =\sum_{h\in \Gamma}\lrphantombrackets{[}{]}{\sum}{\sum_{g\in \Gamma} \eta(g)\eta(g^{-1}h)}\rrep{h}
    =\sum_{g,h\in \Gamma} \eta(g)\eta(g^{-1}h)\rrep{h},
\]
which by equation \eqref{eq:squareConv} is equal to $\big(\sum \eta(g)\rrep{g}\big)^2$, and the proof is complete.
\end{proof}

\begin{ex}
Consider $D_3$, the dihedral group of order 6, described $\langle a,b:a^3=1,b^2=1,b^{-1}ab=a^{-1}\rangle$; ordering the elements $1,a,a^2,b,ab,a^2b$, then the right regular representation matrices of $D_3$ are given by 
\[\underset{\rrep{1}}{\left[
\begin{smallmatrix}
1& \lite{0}& \lite{0}& \lite{0}& \lite{0}& \lite{0}\\
\lite{0}& 1& \lite{0}& \lite{0}& \lite{0}& \lite{0}\\
\lite{0}& \lite{0}& 1& \lite{0}& \lite{0}& \lite{0}\\
\lite{0}& \lite{0}& \lite{0}& 1& \lite{0}& \lite{0}\\
\lite{0}& \lite{0}& \lite{0}& \lite{0}& 1& \lite{0}\\
\lite{0}& \lite{0}& \lite{0}& \lite{0}& \lite{0}& 1  
\end{smallmatrix}
\right]},\underset{\rrep{a}}{
\left[
\begin{smallmatrix}
\lite{0}& 1& \lite{0}& \lite{0}& \lite{0}& \lite{0}\\
\lite{0}& \lite{0}& 1& \lite{0}& \lite{0}& \lite{0}\\
1& \lite{0}& \lite{0}& \lite{0}& \lite{0}& \lite{0}\\
\lite{0}& \lite{0}& \lite{0}& \lite{0}& \lite{0}& 1\\
\lite{0}& \lite{0}& \lite{0}& 1& \lite{0}& \lite{0}\\
\lite{0}& \lite{0}& \lite{0}& \lite{0}& 1& \lite{0}  
\end{smallmatrix}
\right]},\underset{\rrep{a^2}}{
\left[
\begin{smallmatrix}
\lite{0}& \lite{0}& 1& \lite{0}& \lite{0}& \lite{0}\\
1& \lite{0}& \lite{0}& \lite{0}& \lite{0}& \lite{0}\\
\lite{0}& 1& \lite{0}& \lite{0}& \lite{0}& \lite{0}\\
\lite{0}& \lite{0}& \lite{0}& \lite{0}& 1& \lite{0}\\
\lite{0}& \lite{0}& \lite{0}& \lite{0}& \lite{0}& 1\\
\lite{0}& \lite{0}& \lite{0}& 1& \lite{0}& \lite{0}  
\end{smallmatrix}
\right]},\underset{\rrep{b}}{
\left[
\begin{smallmatrix}
\lite{0}& \lite{0}& \lite{0}& 1& \lite{0}& \lite{0}\\
\lite{0}& \lite{0}& \lite{0}& \lite{0}& 1& \lite{0}\\
\lite{0}& \lite{0}& \lite{0}& \lite{0}& \lite{0}& 1\\
1& \lite{0}& \lite{0}& \lite{0}& \lite{0}& \lite{0}\\
\lite{0}& 1& \lite{0}& \lite{0}& \lite{0}& \lite{0}\\
\lite{0}& \lite{0}& 1& \lite{0}& \lite{0}& \lite{0}  
\end{smallmatrix}
\right]},\underset{\rrep{ab}}{
\left[
\begin{smallmatrix}
\lite{0}& \lite{0}& \lite{0}& \lite{0}& 1& \lite{0}\\
\lite{0}& \lite{0}& \lite{0}& \lite{0}& \lite{0}& 1\\
\lite{0}& \lite{0}& \lite{0}& 1& \lite{0}& \lite{0}\\
\lite{0}& \lite{0}& 1& \lite{0}& \lite{0}& \lite{0}\\
1& \lite{0}& \lite{0}& \lite{0}& \lite{0}& \lite{0}\\
\lite{0}& 1& \lite{0}& \lite{0}& \lite{0}& \lite{0}  
\end{smallmatrix}
\right]},\underset{\rrep{a^2b}}{
\left[
\begin{smallmatrix}
\lite{0}& \lite{0}& \lite{0}& \lite{0}& \lite{0}& 1\\
\lite{0}& \lite{0}& \lite{0}& 1& \lite{0}& \lite{0}\\
\lite{0}& \lite{0}& \lite{0}& \lite{0}& 1& \lite{0}\\
\lite{0}& 1& \lite{0}& \lite{0}& \lite{0}& \lite{0}\\
\lite{0}& \lite{0}& 1& \lite{0}& \lite{0}& \lite{0}\\
1& \lite{0}& \lite{0}& \lite{0}& \lite{0}& \lite{0}  
\end{smallmatrix}
\right]}.
\]
A quick check for convolution invariance among the twelve coefficient functions satisfying the conditions in \eqref{set:etaProps} shows that only $I_6$ and $G_1:=\rrep{1}+\rrep{a}+\rrep{a^2}$ give suitable Gramians. Synthesis matrices for the two classes are given by $\syMtx[G_1]:=\left[\begin{smallmatrix}1&1&1&0&0&0\\0&0&0&1&1&1 \end{smallmatrix}\right]$ and $\syMtx[I_6]=I_6$.  
\end{ex}

It follows from Proposition \ref{prop:convInvariance} that the coefficient function of the Gramian of a binary Parseval group frame is always convolution invariant, but convolution invariance of such a function does not ensure matrix symmetry:
\begin{ex}
Let $\{\rrep{j}\}_{j=0}^6$ be the right regular representation matrices for the group $\Z_7$, noting that $\rrep{j}^*=\rrep{j}^{-1}=\rrep{-j}$. Consider the coefficient function given by
\[
    \eta(x)=\begin{cases} 
   1 & \text{if } x\in\{0,1,2,4\}  \\
   0       & \text{if } x\in\{3,5,6\}
  \end{cases},
\]
which is easily verified to satisfy $\eta=\eta\convl\eta$. It is clear, however, that the matrix $G=\sum_{j=0}^6 \eta(j)\rrep{j}$ is not symmetric (since $\eta(1)\neq \eta(6)$, for example), so $G$ is not the Gramian of any frame.
\end{ex}

Adding idempotence under convolution to the conditions in \eqref{set:etaProps} removes the need to require that the coefficient function sums to $1$,
which is then implicit in $\eta(e)=1$.
We conclude a characterization of the coefficient functions of binary Parseval group frames.

\begin{thm}[Gramians of binary Parseval $\Gamma$-frames characterized by $\eta$]\label{thm:coefficientChar}
Given a finite group $\Gamma$ with right regular representation matrices $\{\rrep{g}\}_{g\in \Gamma}$
and  $G=\sum_g \eta(g)\rrep{g}$, then $G$ is the Gramian of a binary Parseval $\Gamma$-frame if and only if $\eta(e)=1$, $\eta$ is symmetric under inversion
of its argument and idempotent under convolution.
\end{thm}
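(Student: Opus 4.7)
The plan is to use Theorem \ref{thm:GramCharacterization} as a stepping stone and translate each of its four conditions for the Gramian of a binary Parseval $\Gamma$-frame into a condition on the coefficient function $\eta$. Writing $G = \sum_g \eta(g) \rrep{g}$ immediately places $G$ in the group algebra $\Z_2[\{\rrep{g}\}]$, so the group-algebra condition is free; I would then treat symmetry, idempotence, and the odd-range property in turn.

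For symmetry, I would use Remark \ref{rem:repEntries} to observe that $\rrep{g}^* = \rrep{g^{-1}}$, which gives $G^* = \sum_g \eta(g^{-1}) \rrep{g}$. The set $\{\rrep{g}\}_{g \in \Gamma}$ is linearly independent over $\Z_2$ because distinct group elements yield permutation matrices with disjoint supports, so $G = G^*$ is equivalent to $\eta(g) = \eta(g^{-1})$ for all $g \in \Gamma$. Idempotence is handled by Proposition \ref{prop:convInvariance}, which states that $G^2 = G$ if and only if $\eta \convl \eta = \eta$.

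For the odd-range condition, I would first use Remark \ref{rem:repEntries} to compute the diagonal entries: $G_{a,a} = \sum_g \eta(g)(\rrep{g})_{a,a} = \eta(e)$ for every $a \in \Gamma$. Once $G$ is symmetric and idempotent, the identity $\bindotp{G\delta_a}{G\delta_a} = G_{a,a}$ recalled in the discussion preceding Theorem \ref{thm:GramCharacterization} shows that the range of $G$ contains an odd vector if and only if some diagonal entry is nonzero, which in this group-algebra setting is precisely $\eta(e) = 1$. Combining these observations with Theorem \ref{thm:GramCharacterization} gives both directions of the stated equivalence.

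The main obstacle I anticipate is purely organizational: keeping track of the fact that the symmetry and idempotence hypotheses must be invoked before applying the odd-column-equals-nonzero-diagonal equivalence for the range condition. As a sanity check, I would note that the paper's preceding remark, that convolution idempotence renders the sum condition $\sum_g \eta(g) = 1$ redundant, is recovered by evaluating $\eta = \eta \convl \eta$ at $e$ and using symmetry to obtain $\eta(e) = \sum_g \eta(g)\eta(g^{-1}) = \sum_g \eta(g)^2 = \sum_g \eta(g)$ in $\Z_2$; this would be mentioned but not relied upon in the core argument.
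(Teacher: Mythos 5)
Your proposal is correct and follows essentially the same route as the paper: reduce to Theorem~\ref{thm:GramCharacterization}, use $\rrep{g}^*=\rrep{g^{-1}}$ for the symmetry condition, Proposition~\ref{prop:convInvariance} for idempotence, and the diagonal-entry identity to convert the odd-range condition into $\eta(e)=1$. Your explicit remarks on the linear independence of $\{\rrep{g}\}$ and on invoking symmetry and idempotence before the diagonal argument only make precise steps the paper leaves implicit.
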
 

\begin{proof}
Since $G=\sum_g \eta(g)\rrep{g}$ and $\rrep{g}^*=\rrep{g^{-1}}$ for all $g\in\Gamma$, it follows that $G$ is symmetric if and only if $\eta$ is. Further, Proposition \ref{prop:convInvariance} equates the idempotence of $G$ with that of $\eta$.  Now, $\eta(e)=1$ if and only if $\eta(g)=1$ for all $g\in\Gamma$, if and only if $G$ has at least one odd column.

Theorem \ref{thm:GramCharacterization} provides four conditions which characterize the Gramians of binary Parseval group frames, three of which we have just demonstrated are equivalent to conditions on $\eta$. Since $G$ automatically satisfies the remaining condition as an element of the group algebra $\Z_2[\{\rrep{g}\}]$, it follows that $G=\sum_g \eta(g)\rrep{g}$ is the Gramian of a binary Parseval $\Gamma$-frame if and only if $\eta(e)=1$, $\eta=\eta*\eta$, and $\eta(g)=\eta(g^{-1})$ for all $g\in\Gamma$.
\end{proof}

In light of the last theorem, we can replace the necessary conditions (\ref{set:etaProps}) with necessary and sufficient conditions
for G being the Gramian of a binary Parseval $\Gamma$-frame $\mathcal F$,
\begin{equation}\label{set:etaPropsB}
G\in
\left\{\sum_{g\in\Gamma}\eta(g)\rrep{g}\;
\begin{array}{|l}
\eta(e)=1\\
\eta(g)=\eta(g^{-1}) \text{ for all }g\in\Gamma\\
\eta=\eta\convl\eta
\end{array} \right\},
\end{equation}
where $\eta$ is assumed to be a $\Z_2$-valued function on $\Gamma$.

\subsection{Binary Parseval frames from orbits of abelian groups}Next, we focus on the special case of abelian groups.

\begin{lemma}[Idempotence from square root condition for abelian groups]\label{lem:convInvariant}
Given a finite abelian group $\Gamma$ 
and function $\eta: \Gamma \to \Z_2$, $\eta$ is idempotent under convolution if and only if 
\[
\eta(g)=\sum_{h^2=g}\eta(h)\textSpace{for all}g\in\Gamma.
\]
\end{lemma}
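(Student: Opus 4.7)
The plan is to expand the convolution $\eta \convl \eta(g) = \sum_{h \in \Gamma} \eta(h)\eta(h^{-1}g)$ and exploit the fact that we are working over $\Z_2$, combined with the abelian group structure, to collapse the sum to a sum over square roots of $g$.

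The key observation is that the map $h \mapsto h^{-1}g$ is an involution on $\Gamma$, since $(h^{-1}g)^{-1}g = g^{-1}h g = h$ where the last equality uses commutativity. The fixed points of this involution are exactly the square roots of $g$, i.e.\ the elements $h$ with $h^2 = g$ (equivalently, $h = h^{-1}g$).

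First I would split the sum $\eta \convl \eta(g) = \sum_{h \in \Gamma} \eta(h)\eta(h^{-1}g)$ into two parts: the contribution from $h$ with $h^2 = g$ and the contribution from the remaining elements. For the non-fixed points, pair each $h$ with its partner $h^{-1}g \neq h$; each such pair contributes $\eta(h)\eta(h^{-1}g) + \eta(h^{-1}g)\eta(h) = 2\eta(h)\eta(h^{-1}g) = 0$ in $\Z_2$. Thus the entire non-fixed-point contribution vanishes. For the fixed points, $h^{-1}g = h$ gives $\eta(h)\eta(h^{-1}g) = \eta(h)^2 = \eta(h)$, since $\Z_2$ is of characteristic $2$ (so $a^2 = a$). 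Assembling the two contributions yields
\[
\eta \convl \eta(g) \;=\; \sum_{h^2 = g} \eta(h).
\]
With this identity in hand, $\eta \convl \eta = \eta$ is manifestly equivalent to $\eta(g) = \sum_{h^2 = g} \eta(h)$ for all $g \in \Gamma$, proving both directions simultaneously.

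There is no serious obstacle: the only subtlety is verifying carefully that the pairing $h \leftrightarrow h^{-1}g$ is genuinely an involution (which needs commutativity) and that its fixed points are precisely the square roots of $g$. Both are short checks. The commutativity hypothesis is essential here; without it, $(h^{-1}g)^{-1}g = g^{-1}h g \neq h$ in general, and the pairing argument breaks down.
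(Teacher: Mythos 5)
Your proof is correct and follows essentially the same route as the paper: the paper's partition of $\Gamma$ into the square roots $K_g$ and the paired-off sets $B_1, B_2$ is exactly your involution $h \mapsto h^{-1}g$ argument, with the same use of $z^2 = z$ and $2z = 0$ in $\Z_2$ to collapse the convolution to $\sum_{h^2=g}\eta(h)$.
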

\begin{proof}
 Fix $g\in\Gamma$ and partition $\Gamma$ into $K_g:=\{h\in\Gamma:h^2=g\}$ and $B:=\Gamma\backslash K_g$. Since $\Gamma$ is abelian and by the definition of $B$, we have that for each element $x\in B$ there is a unique element $x^{-1}g=gx^{-1}\in B$, and $x\neq x^{-1}g$.  We refine our partition on $\Gamma$ by separating $B$ into disjoint sets $B_1$ and $B_2$ such that no two elements  $x,y\in B_i$ multiply to $g$, arbitrarily assigning one element from each pair $\{x,x^{-1}g\}$ to $B_1$ and the other to $B_2$. 
 
 The idempotence under convolution is thus expressed

 \begin{align*}
 \eta(g)= &  \sum_{h\in \Gamma} \eta(h)\eta(h^{-1}g)\\
  = &\sum_{h\in K_g} \eta(h)\eta(\underbrace{h^{-1}g}_{=h}) + \sum_{x\in B_1} \eta(x)\eta(x^{-1}g)+\sum_{y\in B_2}\eta(y)\eta(y^{-1}g)\\
    =&\sum_{h\in K_g} \eta(h)\eta(h) + \sum_{x\in B_1}\big[ \eta(x)\eta(x^{-1}g)+\eta(x^{-1}g)\eta(\underbrace{x}_{\mathclap{=(x^{-1}g)^{-1}g}})\big]\\
    =&\sum_{h\in K_g} \eta(h)+2\sum_{x\in B_1} \eta(x)\eta(x^{-1}g)\\
    =&\sum_{h\in K_g} \eta(h),
 \end{align*}
where the last two identities follow from noting $z^2=z$ and $2z=0$ for all $z\in\Z_2$.
\end{proof}
\begin{ex}[Binary Parseval group frames of $\Z_6$]\label{ex:bpgfZ6}
We use the preceding lemma to classify the binary Parseval group frames generated by the (abelian) additive group $\Gamma:=\Z_6$. Suppose $G=\sum \eta(g)\rrep{g}$ is the Gramian of a binary Parseval $\Z_6$-frame $\Fm$; in the notation of the proof of Lemma \ref{lem:convInvariant}, we have $K_1=K_3=K_5=\emptyset$, for which the ``square root condition'' asserts $\eta(1)=\eta(3)=\eta(5)=0$. By the coefficient function characterization of the Gramian, $\eta(0)=1$, and since $2+2=4$, we have that either $\eta(2)=\eta(4)=1$ or $G$ is the identity matrix. It follows, noting that both options induce idempotent matrices, that any binary Parseval $\Z_6$-frame has a Gram matrix that is either $I_6$ or $G:=\rrep{0}+\rrep{2}+\rrep{4}$, 
\[
    \underset{G}{
    \left[\begin{smallmatrix}         
1&\lite{0}&1&\lite{0}&1&\lite{0}\\
\lite{0}&1&\lite{0}&1&\lite{0}&1\\
1&\lite{0}&1&\lite{0}&1&\lite{0}\\
\lite{0}&1&\lite{0}&1&\lite{0}&1\\
1&\lite{0}&1&\lite{0}&1&\lite{0}\\
\lite{0}&1&\lite{0}&1&\lite{0}&1  
    \end{smallmatrix}\right]
    }
    =
\underset{\rrep{0}}{
    \left[\begin{smallmatrix}
    1&\lite{0}&\lite{0}&\lite{0}&\lite{0}&\lite{0}\\
\lite{0}&1&\lite{0}&\lite{0}&\lite{0}&\lite{0}\\
\lite{0}&\lite{0}&1&\lite{0}&\lite{0}&\lite{0}\\
\lite{0}&\lite{0}&\lite{0}&1&\lite{0}&\lite{0}\\
\lite{0}&\lite{0}&\lite{0}&\lite{0}&1&\lite{0}\\
\lite{0}&\lite{0}&\lite{0}&\lite{0}&\lite{0}&1  
\end{smallmatrix}\right]
}
+
\underset{\rrep{2}}{
    \left[\begin{smallmatrix}        
\lite{0}&\lite{0}&1&\lite{0}&\lite{0}&\lite{0}\\
\lite{0}&\lite{0}&\lite{0}&1&\lite{0}&\lite{0}\\
\lite{0}&\lite{0}&\lite{0}&\lite{0}&1&\lite{0}\\
\lite{0}&\lite{0}&\lite{0}&\lite{0}&\lite{0}&1\\
1&\lite{0}&\lite{0}&\lite{0}&\lite{0}&\lite{0}\\
\lite{0}&1&\lite{0}&\lite{0}&\lite{0}&\lite{0}  
    \end{smallmatrix}\right]
     }
     +
     \underset{\rrep{4}}{
    \left[\begin{smallmatrix}
\lite{0}&\lite{0}&\lite{0}&\lite{0}&1&\lite{0}\\
\lite{0}&\lite{0}&\lite{0}&\lite{0}&\lite{0}&1\\
1&\lite{0}&\lite{0}&\lite{0}&\lite{0}&\lite{0}\\
\lite{0}&1&\lite{0}&\lite{0}&\lite{0}&\lite{0}\\
\lite{0}&\lite{0}&1&\lite{0}&\lite{0}&\lite{0}\\
\lite{0}&\lite{0}&\lite{0}&1&\lite{0}&\lite{0}  
     \end{smallmatrix}\right]
    }.
\]
To complete the classification, we note that $G$ and $I_6$ represent distinct classes, since the Gramians of switching equivalent binary Parseval frames have the same number of nonzero entries. Synthesis matrices for the two classes are given by $\syMtx[G]:=\left[\begin{smallmatrix}1&0&1&0&1&0\\0&1&0&1&0&1 \end{smallmatrix}\right]$ and $\syMtx[I_6]=I_6$.
\end{ex}

In the special case that every element in a group has exactly one square root, an even stronger consequence holds for $\eta$.  This ``unique square root'' property is determined solely by the parity of a group's order (see, for example, Proposition 2.1 in \cite{uniqueSqRts}), and we recall part of this characterization in the following lemma.

\begin{lemma}
A finite group of odd order has unique square roots.  
\end{lemma}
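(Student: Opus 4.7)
The plan is to show that, for a finite group $\Gamma$ of odd order, the squaring map $g \mapsto g^2$ is a bijection from $\Gamma$ to itself, which is exactly the statement that every element has a unique square root.

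First I would invoke Lagrange's theorem: writing $|\Gamma| = 2k+1$, the order of every element divides $2k+1$, and in particular $g^{2k+1} = e$ for every $g \in \Gamma$. This is the only group-theoretic fact I expect to need.

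Given this, I would prove both existence and uniqueness of a square root of an arbitrary $g \in \Gamma$ simultaneously by exhibiting the explicit formula $g^{1/2} = g^{k+1}$. For existence, $(g^{k+1})^2 = g^{2k+2} = g^{2k+1} \cdot g = e \cdot g = g$. For uniqueness, suppose $h \in \Gamma$ satisfies $h^2 = g$; then
\[
h = h \cdot e = h \cdot h^{2k+1} = h^{2k+2} = (h^2)^{k+1} = g^{k+1},
\]
so $h$ is forced to coincide with the element constructed above. Note that this argument does not require $\Gamma$ to be abelian, since each step involves only powers of a single element.

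There is really no substantive obstacle here; the only mild subtlety worth flagging is that the manipulation $h^{2k+2} = (h^2)^{k+1}$ is valid within the cyclic subgroup $\langle h \rangle$ regardless of whether $\Gamma$ itself is abelian, so the proof applies uniformly to all finite groups of odd order, even though the lemma is later applied in the excerpt in the abelian setting.
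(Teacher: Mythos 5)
Your proof is correct and follows essentially the same route as the paper: both exploit the identity $x^{|\Gamma|}=e$ for odd $|\Gamma|$ to manipulate powers of a single element. The only difference is cosmetic — the paper proves injectivity of squaring ($a^2=b^2\Rightarrow a=a^{2n}=b^{2n}=b$) and leaves existence to finiteness, while you exhibit the explicit square root $g^{k+1}$, which makes existence immediate; your remark that no commutativity is needed applies equally to the paper's argument.
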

\begin{proof}
Let $\Gamma$ be a group such that $\abs{\Gamma}=2n-1$ for some integer $n\geq2$, and suppose $a^2=b^2$ for some $a,b\in\Gamma$.  Then $a^{2n-1}=e$, so $a=a\cdot a^{2n-1}=a^{2n}=b^{2n}=b$. 
\end{proof}

\begin{thm}[Odd-ordered abelian groups and $\eta$]\label{thm:uniqueRoots}
Let $\Gamma$ be a finite abelian group of odd order. Then the map $g\mapsto\{g'\in\Gamma:g^{2^m}=g'\text{ for some }m\in\N\}$ partitions $\Gamma$, and a function $\eta: \Gamma \to \Z_2$ is idempotent under convolution if and only if $\eta$ is constant on these sets.
\end{thm}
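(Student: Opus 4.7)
The plan is to reduce this to an application of Lemma \ref{lem:convInvariant} once we understand the squaring map on $\Gamma$. Since $|\Gamma|$ is odd, the preceding lemma gives that squaring $\sigma \colon g \mapsto g^2$ is injective on $\Gamma$, hence a bijection. The sets described in the statement are precisely the orbits of $\sigma$ acting on $\Gamma$: starting at $g$, the forward orbit $\{g, g^2, g^4, \ldots\}$ must cycle (by finiteness), and because $\sigma$ is a bijection, the backward iterates all lie inside this same cycle. Since orbits of a group action partition the underlying set, this establishes the partition claim.

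Next, I would translate the convolution identity into a statement about $\sigma$. By Lemma \ref{lem:convInvariant}, $\eta$ is idempotent under convolution if and only if
\[
\eta(g) \;=\; \sum_{h^2 = g} \eta(h) \qquad \text{for all } g \in \Gamma.
\]
Because $\sigma$ is a bijection, for each $g$ there is exactly one $h \in \Gamma$ with $h^2 = g$, namely $h = \sigma^{-1}(g)$. The sum on the right collapses to a single term, and the condition becomes $\eta(g) = \eta(\sigma^{-1}(g))$ for all $g$, equivalently $\eta(g^2) = \eta(g)$ for all $g \in \Gamma$.

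Finally, I would iterate this identity to conclude that $\eta(g) = \eta(g^{2^m})$ for every $m \in \mathbb{N}$, so $\eta$ is constant on the orbit of $g$ under $\sigma$. Conversely, if $\eta$ is constant on each such orbit, then $\eta(\sigma^{-1}(g)) = \eta(g)$ holds (since $g$ and $\sigma^{-1}(g)$ lie in the same orbit), so the collapsed convolution condition is satisfied and $\eta = \eta \convl \eta$ by Lemma \ref{lem:convInvariant}.

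I do not anticipate any serious obstacle: the content is essentially that the unique-square-roots hypothesis trivializes the sum in Lemma \ref{lem:convInvariant} to the pullback along $\sigma^{-1}$. The only point worth stating carefully is that the sets in the theorem really are the $\sigma$-orbits (rather than just forward iterates), which follows from $\sigma$ being a bijection on a finite set.
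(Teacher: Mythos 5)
Your proposal is correct and follows essentially the same route as the paper: it invokes Lemma \ref{lem:convInvariant}, uses the unique-square-root property of odd-order groups to collapse the sum to the single condition $\eta(g^2)=\eta(g)$, and then shows the sets in the statement are the cycles of the squaring map (which is a bijection on a finite set by injectivity), hence partition $\Gamma$. The paper verifies the partition claim by a slightly more hands-on comparison of the sequences $g^{2^j}$ and $h^{2^j}$, but this is the same underlying argument.
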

\begin{proof}
Since $\Gamma$ has odd order, the unique square root property reduces the condition
\[
\eta(g)=\sum_{h^2=g}\eta(h)\textSpace{for all}g\in\Gamma
\]
to
\[
\eta(g^2)=\eta(g)\textSpace{for all}g\in\Gamma;
\]
thus, it remains only to show that the map defined in the hypothesis partitions $\Gamma$.
Let $g\in\Gamma$, and for $j\in\N$, define $\gamma_j:=g^{2^j}$. Since $\Gamma$ finite, we may take $N$ to be the least positive integer such that $\gamma_{N+1}\in\{\gamma_j\}_{j=1}^{N}$. By $\Gamma$'s unique square roots, it follows that $\gamma_{N+1}=\gamma_1$, or $g=g^{2^{N}}$. 
Now, let $h\in\Gamma$ be distinct from $g$, and similarly define a sequence by $\hat{\gamma}_j:=h^{2^j}$, with minimal $M$ such that $h=h^{2^{M}}$.  It follows that either $\{\gamma_j\}_{j\in\N}=\{\hat{\gamma_j}\}_{j\in\N}$ or $\{\gamma_j\}_{j\in\N}\cap\{\hat{\gamma_j}\}_{j\in\N}=\emptyset$, and the claim is shown.
\end{proof}

\begin{ex}[Classes of $\Z_{17}$-frames]
Suppose $G=\sum_g\eta(g)\rrep{g}\in\Z_2[\Z_{17}]$ is the Gramian of a binary Parseval $\Z_{17}$-frame. $\Z_{17}$ satisfies the conditions of Theorem \ref{thm:uniqueRoots}, so we know that $\eta$ is constant on each of the sets $\Delta_1:=\{1,2,4,8,16,15,13,9\}$ and $\Delta_3:=\{3,6,12,7,14,11,5,10\}$, which are closed under inversion.  Thus, $G$ is one of exactly four operators, given by 
\[
\begin{array}{cccc}
I_{17},&
I_{17}+\sum_{j\in\Delta_1}\rrep{j},&
I_{17}+\sum_{j\in\Delta_3}\rrep{j},&
\text{ and }\sum_{j\in\Z_{17}}\rrep{j}.
\end{array}
\]
\end{ex}

In illustrating an application of Theorem \ref{thm:uniqueRoots}, this example also motivates us to introduce some additional notation.

\begin{defn}[Symmetric doubling orbit, symmetric doubling orbit partition, $\rrep{\sdo{g}}$]\label{def:sdo}
Let $\Gamma$ be a finite abelian group having unique square roots. For any element $g\in\Gamma$, the \emph{symmetric doubling orbit} of $g$ is the set
\[
    \sdo{g}:=\{h\in\Gamma: g^{2^m}=h \text{ for some }m\in\N\}\cup\{h\in\Gamma: (g^{-1})^{2^m}=h \text{ for some }m\in\N\}.
\]
We define 
\[
    \rrep{\sdo{g}}:=\sum_{h\in\sdo{g}}\rrep{h}
\]
and say that the collection $\Gamma'=\{\sdo{g}\}_{g\in J}$ is the \emph{symmetric doubling orbit partition} of $\Gamma$ (indexed by representatives $J\subset \Gamma$) if $\bigcup_{g\in J}\sdo{g}=\Gamma$ and for distinct $g,h\in J$ we have $\sdo{g}\neq\sdo{h}$.
\end{defn}
\begin{rem}
We comment on our terminology.
Since $\Gamma$ is abelian, let us momentarily consider it as an additive group and express it as $\Gamma\cong\bigoplus_{i=1}^k\Z_{p_i}$.
Modifying the notation in Definition \ref{def:sdo} accordingly, we have
\[
\sdo{g}:=
    \{h\in\Gamma: 2^mg=h \text{ for some }m\in\N\}\cup\{h\in\Gamma: 2^m(-g)=h \text{ for some }m\in\N\},
\]
which is equivalent to
$\left\{\rho_m g\right\}_{m=1}^L\cup\left\{\rho_m(-g)\right\}_{m=1}^L$ for some $L\in\N$, where $\rho_m:=2^m I_k$.

It is easy to verify that the matrices $\{2^m I_k\}_{m=1}^L$ are representation matrices for the multiplicative subgroup generated by $2$ in $\Z_L$, 
which motivates the ``doubling orbit'' part of the name \emph{symmetric doubling orbit}: $\left\{\rho_m g\right\}_{m=1}^L$ is, in fact, the orbit of $g$ under the action of $\left\langle 2\right\rangle^{\times}_{\Z_L}$.
\end{rem}
We proceed with two results making use of the new notation. The first may be considered a corollary of Theorems \ref{thm:coefficientChar} and \ref{thm:uniqueRoots}, and the second uses the symmetric doubling orbit partitioning to provide a count of the binary Parseval $\Gamma$-frame unitary equivalence classes for our specified groups $\Gamma$.

\begin{thm}[Characterization of binary Parseval $\Gamma$-frames for odd order, abelian $\Gamma$]\label{thm:charBySDOsums}
Let  $\Gamma$ be an odd-ordered abelian group  with  right regular representation $\rrep{}$ and symmetric doubling orbit partition $\{\sdo{g}\}_{g\in J}$. Let $G$ be a linear map $G:\Z_2^{\Gamma}\to\Z_2^{\Gamma}$, then $G$ is the Gramian of a binary Parseval $\Gamma$-frame if and only if $G=\sum_{g\in J}
\nu(\sdo g)\rrep{\sdo{g}}$ for some $\nu:\Gamma'\to\Z_2$ with $\nu([e])=1$.
\end{thm}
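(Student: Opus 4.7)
The plan is to derive this characterization as a straightforward reassembly of Theorem~\ref{thm:coefficientChar} and Theorem~\ref{thm:uniqueRoots}, translating the conditions on the coefficient function $\eta:\Gamma\to\Z_2$ into the single data of a function $\nu$ defined on the symmetric doubling orbit partition.

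For the forward implication, I would start by invoking Theorem~\ref{thm:coefficientChar} to write $G=\sum_{g\in\Gamma}\eta(g)\rrep{g}$ for a function $\eta$ satisfying $\eta(e)=1$, $\eta(g)=\eta(g^{-1})$ for all $g\in\Gamma$, and $\eta=\eta\convl\eta$. Since $\Gamma$ is abelian of odd order, Theorem~\ref{thm:uniqueRoots} implies that $\eta$ is constant on each orbit $\{g,g^2,g^{2^2},\ldots\}$. Combining this with the inversion symmetry of $\eta$ shows that $\eta$ is constant on each symmetric doubling orbit $\sdo{g}=\{g^{2^m}\}_{m\in\N}\cup\{(g^{-1})^{2^m}\}_{m\in\N}$, so defining $\nu(\sdo{g}):=\eta(g)$ is unambiguous. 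Splitting the sum over $\Gamma$ according to the partition $\{\sdo{g}\}_{g\in J}$ then yields
\[
G=\sum_{g\in J}\sum_{h\in\sdo{g}}\eta(h)\rrep{h}=\sum_{g\in J}\nu(\sdo{g})\sum_{h\in\sdo{g}}\rrep{h}=\sum_{g\in J}\nu(\sdo{g})\rrep{\sdo{g}},
\]
and $\nu([e])=\eta(e)=1$ from the properties of $\eta$.

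For the reverse implication, given $\nu:\Gamma'\to\Z_2$ with $\nu([e])=1$, I would define $\eta:\Gamma\to\Z_2$ by $\eta(h):=\nu(\sdo{h})$, so that again
\[
\sum_{g\in J}\nu(\sdo{g})\rrep{\sdo{g}}=\sum_{g\in J}\sum_{h\in\sdo{g}}\nu(\sdo{h})\rrep{h}=\sum_{h\in\Gamma}\eta(h)\rrep{h}=G.
\]
It remains only to check the three hypotheses of Theorem~\ref{thm:coefficientChar} for $\eta$: (i) $\eta(e)=\nu([e])=1$; (ii) since $h^{-1}\in\sdo{h}$ by construction (the symmetric doubling orbit is closed under inversion), $\eta(h^{-1})=\nu(\sdo{h^{-1}})=\nu(\sdo{h})=\eta(h)$; and (iii) since $\eta$ is constant on each symmetric doubling orbit, it is in particular constant on each ordinary doubling orbit $\{h^{2^m}\}_{m\in\N}$, so Theorem~\ref{thm:uniqueRoots} delivers $\eta=\eta\convl\eta$. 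Theorem~\ref{thm:coefficientChar} then declares $G$ to be the Gramian of a binary Parseval $\Gamma$-frame.

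I do not foresee any real obstacle beyond bookkeeping; the crux is merely the observation that a function is constant on symmetric doubling orbits precisely when it is both inversion-symmetric and constant on ordinary doubling orbits, which is immediate from $\sdo{g}=[g]\cup[g^{-1}]$ in the notation of Theorem~\ref{thm:uniqueRoots}. The proof is therefore a clean two-way translation between $\eta$ and $\nu$.
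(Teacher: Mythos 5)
Your proof is correct and takes essentially the same route as the paper's: both directions are a translation between the coefficient function $\eta$ of Theorem~\ref{thm:coefficientChar} and the orbit function $\nu$, using Theorem~\ref{thm:uniqueRoots} to pass between convolution idempotence and constancy on (symmetric) doubling orbits. If anything, you are slightly more explicit than the paper in noting that inversion symmetry of $\eta$ is what upgrades constancy on ordinary doubling orbits to constancy on symmetric ones, a point the paper's proof leaves implicit.
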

\begin{proof}
Assume $G$ is the Gramian of a binary Parseval $\Gamma$-frame, and let $G=\sum_{g\in\Gamma}\eta(g)\rrep{g}$; then $\eta$ is idempotent under convolution (by Theorem \ref{thm:coefficientChar}) and thus constant on symmetric doubling orbits (by Theorem \ref{thm:uniqueRoots}).  It follows that $\nu(\sdo{g}):= \eta(g)$
is well defined and
satisfies $G=\sum_{\sdo{g}\in J}\nu(\sdo{g})\rrep{\sdo{g}}$ and $\nu(\sdo e)=1$.

Conversely, assume $G=\sum_{g\in J}\nu(\sdo g)\rrep{\sdo{g}}$ for some $\nu$ such that $\nu(\sdo e)=1$, and define $\eta:\Gamma\to\Z_2$ by assigning 
$\eta(g) = \nu(\sdo g)$, then the conditions of Theorem~\ref{thm:uniqueRoots} are met and $\eta$ is idempotent under convolution. Noting that $\eta(e)=1$, the conditions of Theorem~\ref{thm:coefficientChar} hold as well, and $G$ is thereby the Gramian of a binary Parseval $\Gamma$-frame.
\end{proof}
\begin{cor}[Enumerating unitary equivalence classes of binary Parseval $\Gamma$-frames]\label{cor:qtyGrams}
Let the \linebreak group $\Gamma$ and the set $\Gamma'$ 
be as above and define $k:=\abs{\Gamma}$, $k':=\abs{\Gamma'}$, then the number of Gramians of unitarily inequivalent binary Parseval $\Gamma$-frames is $2^{k'-1}\leq2^{\frac{1}{2}(k-1)}$.
\end{cor}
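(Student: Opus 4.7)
The plan is to split the claim into two independent pieces: first the equality, counting the admissible coefficient functions directly via Theorem~\ref{thm:charBySDOsums}, and then the inequality, bounding $k'$ in terms of $k$ using the parity hypothesis on $\Gamma$.

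For the equality, I would invoke Theorem~\ref{thm:charBySDOsums} to identify the Gramians of binary Parseval $\Gamma$-frames with functions $\nu:\Gamma'\to\Z_2$ satisfying $\nu([e])=1$. Since two frames with the same Gramian are unitarily equivalent (the Gramian determines the frame up to unitary equivalence because one can reconstruct the synthesis operator from the Gramian by restricting to a basis of the range) and conversely unitarily equivalent frames have the same Gramian, counting unitary equivalence classes reduces to counting such $\nu$. The value $\nu([e])$ is fixed at $1$, while the remaining $k'-1$ values are free, giving exactly $2^{k'-1}$ Gramians.

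For the inequality, I would show $2(k'-1)\le k-1$, which is equivalent to $2^{k'-1}\le 2^{(k-1)/2}$. The key observation is that odd order forces every nontrivial symmetric doubling orbit to have at least two elements. Indeed, if $g\in\Gamma$ satisfies $g=g^{-1}$, then $g^2=e$, and since $\Gamma$ has odd order, no element of order $2$ exists, so $g=e$. Consequently for any $g\neq e$ we have $g^{-1}\neq g$, and both $g$ and $g^{-1}$ lie in $\sdo{g}$ by definition, so $|\sdo{g}|\ge 2$. Since $\sdo{e}=\{e\}$ and $\Gamma$ is the disjoint union of its symmetric doubling orbits,
\[
k=|\Gamma|=|\sdo{e}|+\sum_{\sdo{g}\neq\sdo{e}}|\sdo{g}|\ge 1+2(k'-1),
\]
which rearranges to $k'-1\le (k-1)/2$, completing the chain $2^{k'-1}\le 2^{(k-1)/2}$.

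I do not anticipate any serious obstacle; the only subtlety is verifying that the identity is the unique self-inverse element under the odd order hypothesis, which follows from elementary group theory. The rest is bookkeeping built on Theorem~\ref{thm:charBySDOsums}.
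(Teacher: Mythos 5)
Your proposal is correct and follows essentially the same route as the paper: identify the Gramians with functions $\nu:\Gamma'\to\Z_2$ fixed to $1$ on $\sdo{e}$ via Theorem~\ref{thm:charBySDOsums}, giving $2^{k'-1}$, and then obtain the bound by showing every nontrivial symmetric doubling orbit has at least two elements. The only cosmetic difference is your justification of that last step (no nonidentity element of an odd-order group equals its inverse) versus the paper's (a singleton orbit would force $h^2=h$, impossible for $h\neq e$); both yield the same counting inequality $k'-1\le\tfrac{1}{2}(k-1)$.
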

\begin{proof}
The value $2^{\abs{\Gamma'}-1}$ is the number of functions $\nu:\Gamma'\to\Z_2$ having the property that 
$\nu(\sdo e)=1$, thus enumerating the functions delineated in Theorem \ref{thm:charBySDOsums}. The quantity $2^{\frac{1}{2}(k-1)}$ is achieved if $\Gamma=\Z_3^2$, as well as any other case such that $\abs{\sdo{g}}=2$ for all $g\in\Gamma\backslash\{e\}$. Exceeding this bound implies the existence of $h\in\Gamma\backslash\{e\}$ such that $\abs{\sdo{h}}=1$, which implies $h=h^2$.  Since the only idempotent element of a group is the identity element,
such an $h$ does not exist.
\end{proof}

Results in \cite{BodmannCampMahoney2014} justify the use of Gramians as class representatives of binary Parseval frames. For a group of size $k$, the naive upper bound of $2^{k^2}$ binary matrices thereby drops to $2^{\frac{1}{2}(k^2-1)}$ symmetric binary matrices with at least one odd column. Theorem~\ref{thm:gramInAlgebra} in this paper puts our Gramians in $\Z_2[\{R_g\}]$, a set of order $2^k$. In the case of abelian $\Gamma$ with unique square roots, Corollary \ref{cor:qtyGrams} gives the number of distinct Gramians of binary Parseval $\Gamma$-frames exactly as $2^{\abs{\Gamma'}-1}$, where $\abs{\Gamma'}\leq\frac{1}{2}(k+1)$ is the quantity of symmetric doubling orbits of $\Gamma$.
     Thus, for a given abelian group $\Gamma$ of odd order $k$, the unitary equivalence classes of binary Parseval frames are classified by computing the ranks of $2^{\abs{\Gamma'}-1}\leq2^{\frac{1}{2}(k-1)}$ Gramians.

Writing the elements of $\Z_p^q$ as vectors suggests plotting subsets of the group for visualization purposes.  Noting that inverse elements are obtained by multiplying by $-1\mod p$, the fact that each symmetric doubling orbit is a collection of scalar multiples of a single element puts each of the points of a given symmetric doubling orbit on a line in $\Z_p^q$ containing the origin.

For many odd-prime/natural-number pairs $p,q$, in fact, the nontrivial symmetric doubling orbits of $\Z_p^q$ are each identical to that line, minus the origin; this property holds any time the multiplicative subgroup of $\Z_p$ generated by 2 is $\Z_p\backslash\{0\}$, as in the cases of $p\in\{3,5,11,13\}$.  It also occurs when $\abs{\langle2\rangle_{\Z_p}^{\times}}=\frac{1}{2}(p-1)$ and $(-1)\notin\langle2\rangle_{\Z_p}^{\times}$, since the \emph{symmetric} part completes the set; the smallest $p$ for which this occurs is 7.  

The work in this paper shows that any Gramian in the group algebra of the regular representations 
yields a binary Parseval $\Z_p^q$-frame for $q\in\N$ and odd prime $p$ if the group elements represented in the sum are the union of a collection of these linear subspaces.  However, the converse of this statement is not true,
as each Mersenne prime (that is, having the form $2^n-1$) greater than 7 provides a counter example, as does every Fermat prime (i.e., of the form $2^n+1$) greater than 5. We illustrate this in Figure~\ref{fig:Z17sq} with plots of the symmetric doubling orbits of $\Z_p^2$ for the smallest value that demonstrates this behavior, $p=17$. Each plot shows a pair of orbits (one in red, one in black) that partition a line into two subsets. Any of the $2^{36}$ linear combinations of coefficients that are constant on these symmetric doubling orbits 
represents a distinct Gramian of a binary Parseval $\Z_{17}^2$-frame.

\begin{figure}[h!]
\includegraphics[trim = 15mm 10mm 10mm 10mm,clip,width=0.75\paperwidth]{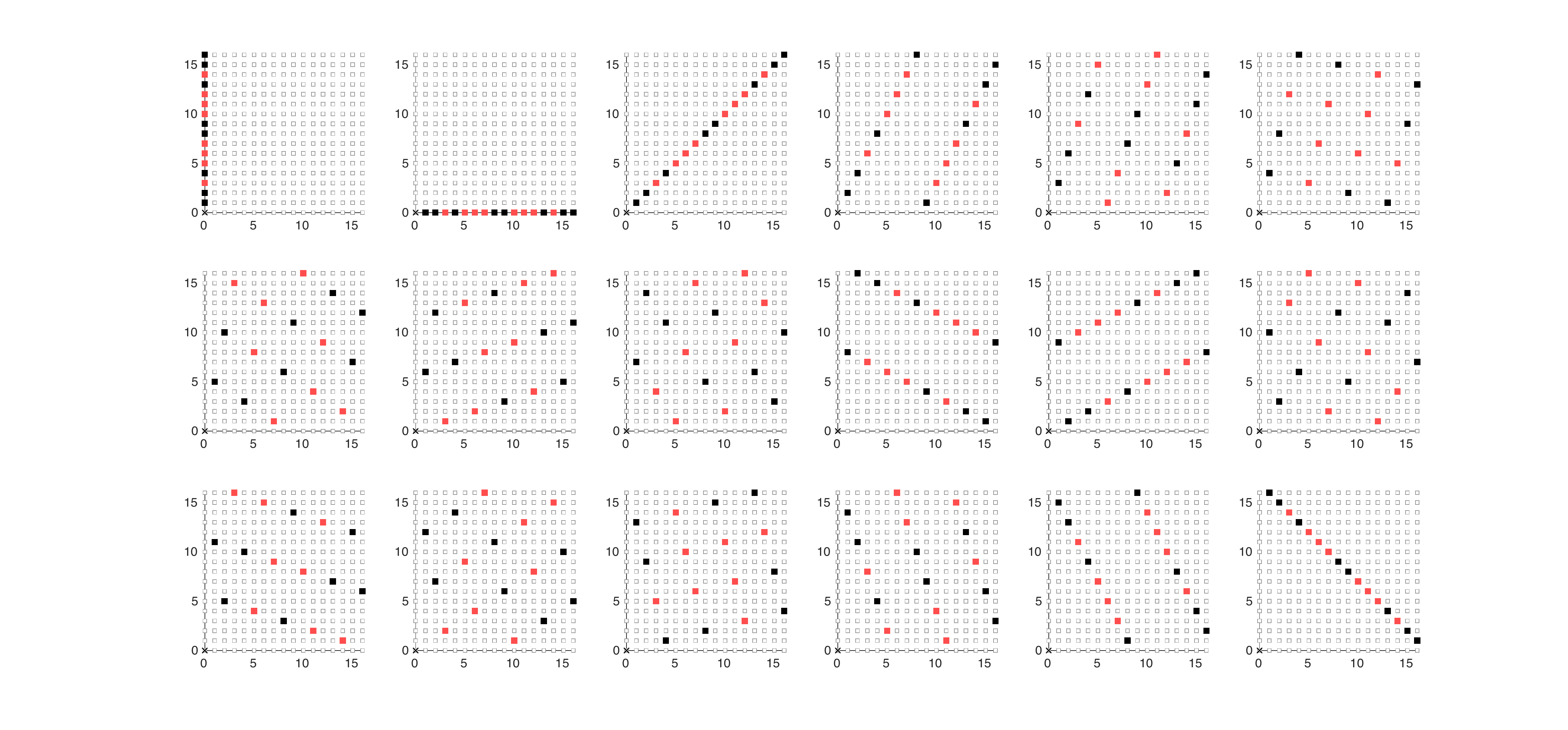}
\caption{Symmetric doubling orbits of $\Z_{17}^2$, plotted in pairs that are complements in one-dimensional subspaces of $\Z_{17}^2$.}\label{fig:Z17sq}
\end{figure}

	\subsection{An algorithm for classifying binary Parseval $\Gamma$-frames for abelian $\Gamma$ of odd order}\label{sec:methodsThy}\label{sec:application}
	
	For groups of smallest order, the unitary equivalence classes are manageable. 
	However, even for $\Z_3^3$
	the enumeration of Parseval frames becomes too tedious to do by hand. 
	One reason is that group automorphisms may lead to
	different Gramians. The resulting set could be reduced to one representative without losing structural information. 
	We recall that switching offers
	a coarser equivalence relation that is suitable for removing copies obtained by group automorphisms.
	
	\begin{prop}[Automorphisms on $\Gamma$ and automorphic switching equivalence]\label{prop:permutationEquivGrams}
		Let $\Gamma$ be a finite group with right regular repesentation matrices $\{\rrep{g}\}_{g\in\Gamma}$, and let $\Fm:=\{f_g\}_{g\in\Gamma}$ be a binary Parseval $\Gamma$-frame with Gramian $G:=\sum_g\eta(g)\rrep{g}$,
		{then an operator $H$ is the Gramian of a binary Parseval $\Gamma$-frame that is automorphically switching equivalent to $\Fm$ if and only if $H=\sum_g\eta(\sigma(g))\rrep{g}$ for some $\sigma\in\Aut{\Gamma}$.}
	\end{prop}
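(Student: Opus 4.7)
The plan is to combine the entrywise description of the Gramian from Theorem~\ref{thm:gramInAlgebra} (namely $G_{a,b} = \eta(a^{-1}b)$, which uses only the unitarity of $\rho$ and the relation $f_g = \rho_g f_e$) with the straightforward fact that unitaries preserve the dot product on $\Z_2^n$. Both directions will then reduce to an accounting exercise keeping track of a single automorphism.

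For the forward direction, I would assume $\Fm'=\{f'_g\}_{g\in\Gamma}$ is automorphically switching equivalent to $\Fm$, so that $f_g = U f'_{\sigma(g)}$ for some unitary $U$ and $\sigma\in\Aut(\Gamma)$. Rewriting as $f'_g = U^{-1} f_{\sigma^{-1}(g)}$ and using $\langle Ux,Uy\rangle=\langle x,y\rangle$ gives
\[
H_{g,h} = \langle f'_g,f'_h\rangle = \langle f_{\sigma^{-1}(g)},f_{\sigma^{-1}(h)}\rangle = \eta\bigl(\sigma^{-1}(g)^{-1}\sigma^{-1}(h)\bigr) = \eta\bigl(\sigma^{-1}(g^{-1}h)\bigr),
\]
where the last step uses that $\sigma^{-1}$ is a homomorphism. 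Since $H\in\Z_2[\{R_g\}]$ with $H_{g,h}$ depending only on $g^{-1}h$, we can read off $H=\sum_k \eta(\sigma^{-1}(k))R_k$. Replacing $\sigma$ by $\sigma^{-1}\in\Aut(\Gamma)$ yields exactly the form stated in the proposition.

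For the converse, given $\sigma\in\Aut(\Gamma)$ I would simply define $\Fm'=\{f'_g\}_{g\in\Gamma}$ by the relabeling $f'_g:=f_{\sigma^{-1}(g)}$. Then with $U=I$ the identity $f_g = U f'_{\sigma(g)}$ holds, so $\Fm'\auteq\Fm$. The frame $\Fm'$ is again Parseval (since the reconstruction identity is invariant under reindexing), and computing its Gramian entries via the same calculation as above shows its coefficient function is $g\mapsto \eta(\sigma(g))$, hence its Gramian equals $H$. To see that $\Fm'$ is genuinely a $\Gamma$-frame, I can either invoke Corollary~\ref{cor:gammaFrameIFFGramInAlgebra} (since $H$ lies in the group algebra) or directly verify that $\rho'_g:=\rho_{\sigma(g)}$ defines a unitary representation of $\Gamma$ with $\rho'_g f'_e = f'_g$, using that $\sigma$ is an automorphism.

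Neither direction involves a real obstacle; the only thing requiring care is aligning the conventions so that the automorphism appearing in the Gramian is $\sigma$ rather than $\sigma^{-1}$. Since $\Aut(\Gamma)$ is closed under inversion, this is a cosmetic matter and does not affect the content of the statement.
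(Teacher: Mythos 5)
Your proposal is correct and takes essentially the same route as the paper: both arguments rest on the entrywise identity $G_{a,b}=\eta(a^{-1}b)$ from Theorem~\ref{thm:gramInAlgebra}, the invariance of the dot product under unitaries, and the homomorphism property of $\sigma$, with the paper realizing the converse by the orbit of $f_e$ under the composed representation $\rho\circ\sigma$ rather than your relabeling. The only blemish is the one you flagged yourself: with $f'_g:=f_{\sigma^{-1}(g)}$ the coefficient function comes out as $\eta\circ\sigma^{-1}$ (and the intertwining representation is $\rho'_g=\rho_{\sigma^{-1}(g)}$, or else take $f'_g:=f_{\sigma(g)}$), which is harmless because the statement is existential over $\Aut{\Gamma}$, closed under inversion.
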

\begin{proof}
	{Let $\rho$ be the group representation that induces $\Fm$; we first show that the composition of the coefficent function $\eta$ with an automorphism induces the Gramian of an automorphically  switching equivalent  frame.
	
	Let $\sigma\in\Aut{\Gamma}$ and define $H:=\sum_g\eta(\sigma(g))\rrep{g}$. From
		}
		$\rho\circ\sigma$ being a group homomorphism, it follows that $\{\rho_{\sigma(g)}f_e\}_{g\in\Gamma}$ is a binary Parseval $\Gamma$-frame that is 
		automorphically switching equivalent to $\Fm$.  By Corollary \ref{cor:gammaFrameIFFGramInAlgebra}, the Gramian $G'$ of $\{\rho_{\sigma(g)}f_e\}_{g\in\Gamma}$ admits a coefficient function $\nu$ such that $G'=\sum_g\nu(g)\rrep{g}$. It remains only to prove that $\nu=\eta\circ\sigma$, so that $G'=H$.
		
	Recall from the proof of Theorem \ref{thm:gramInAlgebra} that for $a,b\in\Gamma$, $G_{a,b}=\eta(a^{-1}b)$ and $G'_{a,b}=\nu(a^{-1}b)$. We conclude
	\begin{align*}
		\nu(a^{-1}b)
		&=\bindotp{\rho_{\sigma(b)}f_e}{\rho_{\sigma(a)}f_e}\\
		&=G_{\sigma(a),\sigma(b)}\\
		&=\eta\left(\sigma(a)^{-1}\sigma(b)\right)\\
		&=\eta\left(\sigma(a^{-1}b)\right),
	\end{align*}
	this last identity following from the fact that $\sigma$ is an automorphism.
	
	{
	Conversely, suppose $\Fm':=\{f'_g\}_{g\in\Gamma}$ is a $\Gamma$-frame that is automorphically switching equivalent to a frame $\Fm$ induced by a representation $\rho$. Let the unitary $U$ and $\sigma\in\Aut{\Gamma}$ give $f_g'=Uf_{\sigma(g)}=U\rho_{\sigma(g)}f_e$ for all $g\in\Gamma$. 
	Let the Gramians of $\Fm$ and $\Fm'$ be $G= \sum_g\eta(g)\rrep{g}$ and $H=\sum_g\nu(g)\rrep{g}$, respectively. We equate
		\begin{align*}
		\nu(a^{-1}b)
		&=\bindotp{f_b'}{f_a'}\\
		&=\bindotp{U\rho_{\sigma(b)}f_e}{U\rho_{\sigma(a)}f_e}\\
		&=\bindotp{\rho_{\sigma(b)}f_e}{\rho_{\sigma(a)}f_e}\\
		&=\eta\left(\sigma(a^{-1}b)\right),
	\end{align*}
	and we see that $H=\sum_g\eta(\sigma(g))\rrep{g}$ has the claimed form.}
\end{proof}

Next, we study how symmetric doubling orbits  behave under automorphisms.  Let $g,h\in\Gamma$ and $a\in\N$ such that $g=h^{2^a}$. Under an
automorphism $\sigma\in\Aut{\Gamma}$, we identify $\sigma(g)=\sigma(h^{2^a})=\sigma(h)^{2^a}$. Consequently, if $g \in \sdo h$, then $\sigma(g) \in \sdo{\sigma(h)}$.
This means the action of $\sigma$ on $\Gamma$ passes to an action on the symmetric doubling orbits.

\begin{defn}For a finite abelian group $\Gamma$ partitioned into symmetric doubling orbits $\Gamma'=\{\sdo g \}_{\sdo{g} \in J}$ and an automorphism $\sigma$,
we let $\tilde \sigma$ be the associated bijection on $\Gamma'$ such that $\tilde \sigma(\sdo g) = \sdo{\sigma(g)}$.
\end{defn}

\begin{cor}[Automorphisms on $\Gamma$ and symmetric doubling orbits]\label{cor:automorphismsPreserveSDOs}
	Let $\Gamma$, $\{R_g\}$, $\Fm$, $G$ and $\eta$ be as above, and suppose $\Gamma$ is abelian of odd order. Let $\Gamma'$ be the symmetric doubling orbit partition of $\Gamma$, and $G=\sum_{ \sdo g\in \Gamma'}\tilde \eta(\sdo g)\rrep{\sdo{g}}$ with $\tilde \eta: \Gamma' \to \mathbb{Z}_2$, then
	{an operator $H$ is the Gramian of a binary Parseval $\Gamma$-frame that is automorphically switching equivalent to $\Fm$ if and only if 
	$H=\sum_{\sdo g\in\Gamma'}\tilde \eta(\tilde \sigma(\sdo g))R_{\sdo{g}}$ for some $\sigma\in\Aut{\Gamma}$.}
\end{cor}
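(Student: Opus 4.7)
The plan is to reduce the statement directly to Proposition~\ref{prop:permutationEquivGrams} by translating between the coefficient function $\eta : \Gamma \to \Z_2$ and its factorization $\tilde{\eta} : \Gamma' \to \Z_2$ through the quotient map $\pi : \Gamma \to \Gamma'$ that sends $g$ to $\sdo{g}$. Since $\Gamma$ is abelian of odd order and $\Fm$ is a binary Parseval $\Gamma$-frame, Theorem~\ref{thm:charBySDOsums} guarantees that $\eta$ is constant on symmetric doubling orbits, so $\eta = \tilde{\eta} \circ \pi$.

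Next I would record the compatibility between $\pi$ and any $\sigma \in \Aut{\Gamma}$. From the identity $\sigma(h^{2^a}) = \sigma(h)^{2^a}$ noted just before the corollary, together with $\sigma(h^{-1}) = \sigma(h)^{-1}$, the image $\sigma(\sdo{h})$ is contained in $\sdo{\sigma(h)}$; applying the same observation to $\sigma^{-1}$ shows equality, so $\tilde{\sigma}$ is a well-defined bijection on $\Gamma'$ and $\pi \circ \sigma = \tilde{\sigma} \circ \pi$. This is the main (though mild) technical step, and nothing else requires real work.

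Combining these two ingredients, I would rewrite the expression appearing in Proposition~\ref{prop:permutationEquivGrams}. For any $\sigma \in \Aut{\Gamma}$,
\[
\sum_{g \in \Gamma} \eta(\sigma(g))\, \rrep{g}
= \sum_{g \in \Gamma} \tilde{\eta}(\pi(\sigma(g)))\, \rrep{g}
= \sum_{g \in \Gamma} \tilde{\eta}(\tilde{\sigma}(\sdo{g}))\, \rrep{g}
= \sum_{\sdo{g} \in \Gamma'} \tilde{\eta}(\tilde{\sigma}(\sdo{g}))\, \rrep{\sdo{g}},
\]
where the final equality groups the sum by symmetric doubling orbits, uses the definition $\rrep{\sdo{g}} = \sum_{h \in \sdo{g}} \rrep{h}$, and uses that $\tilde{\eta} \circ \tilde{\sigma}$ is constant on each orbit.

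The forward direction of the corollary then follows immediately from Proposition~\ref{prop:permutationEquivGrams}: if $\Fm'$ is automorphically switching equivalent to $\Fm$ via $\sigma$, its Gramian equals $\sum_g \eta(\sigma(g))\rrep{g}$, which by the display equals $\sum_{\sdo g \in \Gamma'} \tilde{\eta}(\tilde{\sigma}(\sdo g))\rrep{\sdo g}$. For the converse, given any $H$ of the latter form, reading the display in reverse presents $H$ as $\sum_g \eta(\sigma(g))\rrep{g}$, at which point Proposition~\ref{prop:permutationEquivGrams} produces the desired automorphically switching equivalent binary Parseval $\Gamma$-frame. No obstacle beyond the bookkeeping of the bijection $\tilde{\sigma}$ is anticipated.
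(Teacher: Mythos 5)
Your proposal is correct and follows essentially the same route as the paper: identify $\eta$ with $\tilde\eta\circ\pi$, use the compatibility $\pi\circ\sigma=\tilde\sigma\circ\pi$ (which the paper establishes in the discussion preceding the corollary) to rewrite $\sum_{g\in\Gamma}\eta(\sigma(g))\rrep{g}$ as $\sum_{\sdo g\in\Gamma'}\tilde\eta(\tilde\sigma(\sdo g))\rrep{\sdo g}$, and then invoke Proposition~\ref{prop:permutationEquivGrams} in both directions. Your version is merely a bit more explicit (e.g., checking that $\tilde\sigma$ is a bijection via $\sigma^{-1}$), but no new idea or different decomposition is involved.
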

\begin{proof}
	%
	{Let $\sigma\in\Aut{\Gamma}$, and
	$\tilde \sigma$ the associated bijection on $\Gamma'$. Let $\eta(g) = \tilde \eta(\sdo g)$, for any $g \in \Gamma$.
	}
	Consequently,
	$\sum_{\sdo g\in\Gamma'}\eta(\tilde \sigma(\sdo g))R_{\sdo{g}}=\sum_{g\in\Gamma}\eta(\sigma(g))R_{g}$. Applying Proposition \ref{prop:permutationEquivGrams} completes the proof.
\end{proof}

	By identifying Gramians in the group algebra with functions on the group, Corollary \ref{cor:gammaFrameIFFGramInAlgebra} reduces the search for Gram matrices associated with a given $\Gamma$ to a search over a subset of $\Z_2$-valued coefficient functions on $\Gamma$; Theorem \ref{thm:coefficientChar} specifies that subset. Proposition \ref{prop:permutationEquivGrams} allows a classification of the valid coefficient functions in terms of the automorphism group on $\Gamma$. Specialized results for abelian groups summarized in Corollary \ref{cor:automorphismsPreserveSDOs} provide us with a concrete method for obtaining all binary Parseval group frames for abelian, odd-ordered groups.
	The following result provides theoretical justification for an algorithm guaranteed to produce a list of Gram matrices
	that contains exactly one representative from each automorphic switching equivalence class. 
\begin{thm} \label{prop:multiplicationTableJustification}
Given an odd-ordered abelian group $\Gamma$ and a set $\cl{M}$ which generates the automorphism group of $\Gamma$, the algorithm described below partitions the Gramians of binary Parseval $\Gamma$-frames under automorphic switching equivalence.
\end{thm}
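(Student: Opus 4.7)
The plan is to combine Theorem~\ref{thm:charBySDOsums} with Corollary~\ref{cor:automorphismsPreserveSDOs} to reduce the statement to a standard orbit computation under a finite group action, where the action is determined by the generating set $\mathcal{M}$. First, Theorem~\ref{thm:charBySDOsums} places the Gramians of binary Parseval $\Gamma$-frames in explicit bijection with the finite set
\[
\mathcal{N} := \{\nu : \Gamma' \to \Z_2 \mid \nu(\sdo{e}) = 1\},
\]
via $G = \sum_{\sdo{g} \in \Gamma'} \nu(\sdo{g}) R_{\sdo{g}}$. So I would regard the algorithm's outer loop as enumerating $\mathcal{N}$; each Gramian is represented exactly once on that list.

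Next, Corollary~\ref{cor:automorphismsPreserveSDOs} characterizes automorphic switching equivalence of the Gramians in terms of the induced action on $\Gamma'$: two functions $\nu,\nu' \in \mathcal{N}$ correspond to automorphically switching equivalent frames if and only if $\nu' = \nu \circ \tilde\sigma$ for some $\sigma \in \Aut(\Gamma)$. The short calculation $\sigma(\tau(g)^{2^a}) = (\sigma\tau)(g)^{2^a}$ shows $\widetilde{\sigma\tau} = \tilde\sigma \circ \tilde\tau$, so $\sigma \mapsto \tilde\sigma$ is a group homomorphism and composition with $\tilde\sigma$ gives an honest action of $\Aut(\Gamma)$ on $\mathcal{N}$. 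Hence the automorphic switching equivalence classes of Gramians are exactly the orbits of this action.

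Because $\mathcal{M}$ generates $\Aut(\Gamma)$, the orbit of any $\nu \in \mathcal{N}$ is produced by starting at $\nu$ and repeatedly applying $\{\tilde\sigma \colon \sigma \in \mathcal{M}\}$ until no new elements appear---a process that terminates because $\mathcal{N}$ is finite. The algorithm would then partition $\mathcal{N}$ by marking every element unprocessed, picking an unprocessed $\nu$, closing under the generators by breadth-first application, marking the resulting orbit as processed and outputting $\nu$ as its representative, and repeating until the marking is complete. Correctness of the partition follows from the two equivalences above, and termination is immediate from the finiteness of $\mathcal{N}$. The only step that requires any real verification is the homomorphism property $\widetilde{\sigma\tau} = \tilde\sigma \circ \tilde\tau$, since this is what guarantees that iterating through $\mathcal{M}$ (rather than over all of $\Aut(\Gamma)$) still recovers the full orbit; everything else is bookkeeping.
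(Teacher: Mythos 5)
Your proposal is correct and follows essentially the same route as the paper: it reduces the problem via Corollary~\ref{cor:automorphismsPreserveSDOs} (with Theorem~\ref{thm:charBySDOsums}) to the action of $\Aut(\Gamma)$ on coefficient functions over the symmetric doubling orbits, and then argues that iterating the generators in $\mathcal{M}$ until stabilization recovers each full orbit, exactly as the paper does with its increasing sets $\accSet{j}$ and the stabilization argument. The only stylistic difference is that you phrase the closure step abstractly as a breadth-first orbit computation under the induced action $\sigma\mapsto\tilde\sigma$, whereas the paper spells out the same stabilization explicitly.
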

\begin{proof}
Let $\Gamma'$ be the symmetric doubling orbit partitioning of $\Gamma$. Corollary \ref{cor:automorphismsPreserveSDOs} reduces the theorem's partitioning to the comparison of symmetric doubling orbit coefficient functions. In particular, two binary Parseval $\Gamma$-frames are automorphically switching equivalent  if and only if their Gramians $\sum_{\sdo g\in\Gamma'}\eta(\sdo g)R_{\sdo g}$ and 
$\sum_{\sdo g\in\Gamma'}\nu(\sdo g)R_{\sdo g}$ have the property that $\eta(\sdo g)=\nu(\tilde \sigma(\sdo g))$ for all $g\in\Gamma$ and $\tilde \sigma$ determined by the action of some $\sigma\in\Aut{\Gamma}$ on the symmetric doubling orbits.  Given a coefficient function $\eta$, the algorithm does one of two things each time it accesses the multiplication table: it either identifies another coefficient function belonging to the same partition as $\eta$, or it terminates the search for coefficient functions in that partition. It thus remains to show that the algorithm exhausts the partition for any such $\eta$.

Let $\eta:\Gamma\to\Z_2$ be constant on symmetric doubling orbits. Enumerate $\cl{M}=\{M_i\}_{i=1}^N$ and define $M_0:=Id\in\aut(\Gamma)$, and let $\accSet{0}:=\{S_{\eta}\}$, where $S_{\eta}:=\eta^{-1}(1)$. 
For $j\in\N$, define the set collection
\[
    \accSet{j}:=\{M_i(S):i=1,2,\ldots,N\text{ and } S\in\accSet{j-1}\}.
\]
Note that $\accSet{j-1}\subseteq\accSet{j}$ for all $j\in\N$, since $S\in\accSet{j-1}$ implies that $M_0(S)\in\accSet{j}$. The algorithm produces each $\accSet{j}$ sequentially and terminates the search for elements in $\eta$'s partition at the end of identifying the elements of $\accSet{j}$ if $\accSet{j}=\accSet{j-1}$.  Now, if $\nu(g)=\eta(\sigma(g))$ for all $g\in\Gamma$ and some $\sigma\in\Aut{\Gamma}$, then there is a finite sequence $l_1,l_2,\ldots,l_k$ such that $\sigma=M_{l_k}M_{l_{k-1}}\cdots M_{l_1}$. It follows that the partition reprepresented by $\eta$ is the set $\accSet{L}$ for some $L\in\N$; thus, the algorithm produces the partition of $\eta$ if and only if there is an integer $j_{\eta}$ such that 
\begin{equation}\label{eq:accSetContainment}
    \accSet{1}\subsetneq\accSet{2}\subsetneq\cdots\subsetneq\accSet{j_{\eta}}=\accSet{j_{\eta}+i}\textSpaceHalf{for all }i\in\N.
\end{equation}
Let $j_0\in\N$ be such that $\accSet{j_0-1}=\accSet{j_0}$; existence follows from the finiteness of $\Z_2[\Gamma]$.
To prove that such $j_{\eta}$ exists, it is enough to show that the equality $\accSet{j_0}=\accSet{j_0-1}$ implies $\accSet{j_0}=\accSet{j_0+i}$ for all $i\in\N$.

Let $S'\in\accSet{j_0+1}$. By the inclusion $\accSet{j_0}\subseteq\accSet{j_0+1}$, it is left to show that $S'\in\accSet{j_0}$.  By the definition of $\accSet{j_0+1}$, we have $S'=M_i(S)$ for some $i\in\{0,1,\ldots,N\}$ and some $S\in\accSet{j_0}=\accSet{j_0-1}$. Since $S\in\accSet{j_0-1}$, it follows that $S'=M_i(S)\in\accSet{j_0}$, and the proof is complete.
\end{proof}

    \smallskip
  
\noindent%
\setlength{\fboxsep}{6pt}   
\begin{minipage}{.8\paperwidth} 
\itemindent0pt
\framebox[1.1\width][c]{\begin{minipage}{.69\paperwidth}
\centering{\textbf{A Practitioner's Guide to Generating Gramians of Binary Parseval $\Gamma$-Frames for abelian $\Gamma$ of Odd Order}}
\begin{enumerate}[leftmargin=.1in]
    \item Produce a set $J$ so that $\{e\}\cup J$ indexes the symmetric doubling orbit partition $\Gamma'$ of $\Gamma$. 
    \item Select $\mathcal{M}\subset\Aut{\Gamma}$ to seed a multiplication table. If $\mathcal{M}$ generates $\Aut{\Gamma}$, this algorithm provides 
    a partition of $\Gamma$-frames into automorphic switching equivalence classes. \emph{(See Remark \ref{rem:autChoice})}
    \item Produce the automorphism multiplication table containing a row for each $M_i \in \mathcal M$, with entry $(i,j)$ giving $M_i(\sdo{g_j})$.
    \item For each $m \leq\frac{1}{2}\abs{\Gamma'}$, apply the method described in Example \ref{ex:classifyZ3up2} to partition 
    subsets of 
    the collection 
    $\{\bigcup_{g\in K}\sdo{g}:K\subset J,\abs{K}=m\}$. For $m>\frac{1}{2}\abs{\Gamma'}$, use the fact that for given indexing sets $K,K'$, the sets $\bigcup_{g\in K}\sdo{g}$ and $\bigcup_{g\in K'}\sdo{g}$ represent the same class if and only if $\bigcup_{g \in J\setminus K}\sdo{g}$ and $\bigcup_{g \in J \setminus K'}\sdo{g}$ do.
\end{enumerate}
\end{minipage}}
\end{minipage}

\begin{rem}[Sampling $\Aut{\Gamma}$]\label{rem:autChoice}
Choosing $\mathcal{M}=\Aut{\Gamma}$ guarantees accurate partitioning, although $\Aut{\Gamma}$ may be difficult to calculate.  
Theorem~\ref{prop:multiplicationTableJustification} tells us that we can obtain this partitioning as long as $\mathcal{M}$ is a generating set for $\Aut{\Gamma}$.  If $\mathcal{M}$ is not known to generate $\Aut{\Gamma}$, the potential undersampling of the automorphism group may simply lead to the case that some classes are represented multiple times; the number of Gramians is still smaller than $2^{\abs{\Gamma'}-1}$.
\end{rem}

		The following example demonstrates how the algorithm works.
	
	\begin{ex}[Classifying binary Parseval $\Z_3^2$-frames]\label{ex:classifyZ3up2}

Let the symmetric doubling orbit partition of $\Z_3^2$ given by set $\Gamma'=\{\sdo g: g \in \{ e \} \cup J\}$ with
 $J:=\{\nCksm{1}{0},\nCksm{1}{1},\nCksm{0}{1},\nCksm{1}{2}\}$. We shall classify binary Parseval $\Z_3^2$-frames up to automorphic switching equivalence by identifying suitable Gramian representatives for each class. These Gramians have the form $I+\sum_{i=1}^m\rrep{\sdo{g_i}}$ for some $m\in\{0,1,2,3,4\}$ and distinct $g_i$'s, and we proceed by considering one value of $m$ at a time. We make use of the fact that for any finite vector space $V$, $\Aut{V}\equiv\GL{V}$. 

$\mathbf{m=0:}$ The cases of $m=0$ and $m=4$ are trivial and listed in the summary.

$\mathbf{m=1:}$ The matrix $\left[\begin{smallmatrix}1&1\\1&0\end{smallmatrix}\right]\in\GL{\Z_3^2}$ gives
\[
\left[\begin{smallmatrix}1&1\\1&0\end{smallmatrix}\right]\sdo{\nCksm{1}{0}}
	=\left\{
	    \left[\begin{smallmatrix}1&1\\1&0\end{smallmatrix}\right]\nCksm{1}{0},
	    \left[\begin{smallmatrix}1&1\\1&0\end{smallmatrix}\right]\nCksm{2}{0}
        \right\}
    =\left\{\nCksm{1}{1},\nCksm{2}{2} \right\}
    =\sdo{\nCksm{1}{1}};
\]
applying the preceding corollary, $I+\rrep{\sdo{\nCksm{1}{0}}}$ and $I+\rrep{\sdo{\nCksm{1}{1}}}$ are thus Gramians of automorphically switching equivalent binary Parseval $\Z_3^2$-frames.  With this in mind, consider the multiplication table given in Table~\ref{fig:Z3up2_MT}. 
\begin{table}[h]
    \centering
\[\begin{array}{r|cccc}
	    &	\sdo{\nCksm{1}{0}}&	\sdo{\nCksm{1}{1}}&	\sdo{\nCksm{1}{2}}&	\sdo{\nCksm{0}{1}}\\
	\hline			
\left[\begin{smallmatrix}1&1\\1&0\end{smallmatrix}\right]&	\sdo{\nCksm{1}{1}}&	\sdo{\nCksm{1}{2}}&	\sdo{\nCksm{0}{1}}&	\sdo{\nCksm{1}{0}}\\
\left[\begin{smallmatrix}2&1\\1&0\end{smallmatrix}\right]&	\sdo{\nCksm{1}{2}}&	\sdo{\nCksm{0}{1}}&	\sdo{\nCksm{1}{1}}&	\sdo{\nCksm{1}{0}}\\
\left[\begin{smallmatrix}1&1\\0&1\end{smallmatrix}\right]&	\sdo{\nCksm{1}{0}}&	\sdo{\nCksm{1}{2}}&	\sdo{\nCksm{0}{1}}&	\sdo{\nCksm{1}{1}}
\end{array}
\]    
    \caption{Multiplication table for selected $M\in\GL{\Z_3^2}$}
    \label{fig:Z3up2_MT}
\end{table}
The first row shows that for $g,h\in J$,  $\sdo{g}=\left[\begin{smallmatrix}1&1\\1&0\end{smallmatrix}\right]^a\sdo{h}$ for some integer $a$. It follows that the four operators $I+\rrep{\sdo{g}}$ represent the same automorphic switching equivalence class.

$\mathbf{m=2:}$ Similarly, the first two entries in the first row give
\[\left[\begin{smallmatrix}1&1\\1&0\end{smallmatrix}\right]
    \left(\sdo{\nCksm{1}{0}}\cup\sdo{\nCksm{1}{1}}\right)
    =\left[\begin{smallmatrix}1&1\\1&0\end{smallmatrix}\right]\sdo{\nCksm{1}{0}}
    \cup\left[\begin{smallmatrix}1&1\\1&0\end{smallmatrix}\right]\sdo{\nCksm{1}{1}}
    =\sdo{\nCksm{1}{1}}\cup\sdo{\nCksm{1}{2}},
\]
implying
\[
I+\rrep{\sdo{\nCksm{1}{0}}}\!\!+\rrep{\sdo{\nCksm{1}{1}}}\;\text{ and }\;\;
I+\rrep{\sdo{\nCksm{1}{1}}}\!\!+\rrep{\sdo{\nCksm{1}{2}}}
\]
are representatives of the same equivalence class.
Proceeding down the first two columns, we find that Gramians
$I+\rrep{\sdo{\nCksm{1}{2}}}+\rrep{\sdo{\nCksm{0}{1}}}$ and 
$I+\rrep{\sdo{\nCksm{1}{0}}}+\rrep{\sdo{\nCksm{1}{2}}}$ represent that same class.

Reentering the table with the index pair $\nCksm{1}{2},\nCksm{0}{1}$, we find the sets $\sdo{\nCksm{0}{1}}\cup\sdo{\nCksm{1}{0}}$ and $\sdo{\nCksm{0}{1}}\cup\sdo{\nCksm{1}{1}}$; it follows that each of the six distinct Gramians $I+\sum_{i=1}^2\rrep{\sdo{g_i}}$ represent the same class. \emph{Note: If this step had not exhausted the ``$m=2$'' case, we would continue to reenter the multiplication table with each new equivalent $\bigcup g_i$ until the class stops growing.}

$\mathbf{m=3:}$ We make use of set complements. Fixing $g,h\in J$, let $a$ satisfy $\sdo{g}=\left[\begin{smallmatrix}1&1\\1&0\end{smallmatrix}\right]^a\sdo{h}$. It follows that $\bigcup_{g'\neq g}\sdo{g'}=\left[\begin{smallmatrix}1&1\\1&0\end{smallmatrix}\right]^a\bigcup_{h'\neq h}\sdo{h'}$, since each $M\in\GL{\Z_3^2}$ is a bijection on $\bigcup_{g'\in J}\sdo{g'}$. We conclude that each of the sums $I+\sum_{i=1}^3\rrep{\sdo{g_i}}$ represents the same equivalence class, since $g$ and $h$ were chosen arbitrarily.

\textbf{Summary:} The binary Parseval $\Z_3^2$-frames partition into five automorphic switching equivalence classes, with representative Gramians given by the identity operator, the $9\times9$ matrix of $1$'s, and three more representatives
		\[
		I+\rrep{\sdo{\nCksm{1}{0}}},\;\; 
		I+\rrep{\sdo{\nCksm{1}{0}}}\!\!+\rrep{\sdo{\nCksm{0}{1}}},\text{ and }\;\;
		I+\rrep{\sdo{\nCksm{1}{0}}}\!\!+\rrep{\sdo{\nCksm{1}{1}}}\!\!+\rrep{\sdo{\nCksm{0}{1}}}.
		\]


    
    Hence, the nontrivial Gramians turn out to have ranks $3$ ($m=1$), $5$ ($m=2$), and $7$ ($m=3$). 
    The Gram matrices belonging to a given rank are equivalent, so
    we partitioned the fourteen nontrivial Gramians $I+\sum_{i=1}^m\rrep{\sdo{g_i}}$ into three equivalence classes.
    
    \end{ex}



\section{Binary Parseval group frames as codes}\label{sec:framesAsCodes}
One motivating application of binary Parseval group frames is their use as codes. The range of the analysis operator $\anMtx[]$ is the so-called code book
in $\mathbb{Z}_2^k$. Each codeword $y$ in this codebook is the image of a unique vector $x \in \Z_2^n$ which is obtained by $x=\anMtx[]^* y$.

When $k>n$, the redundancy introduced by the embedding $\anMtx[]$ makes it possible to accurately recover $x$ from a \emph{corrupted} codeword $\tilde{y}:=E\anMtx[]x+\epsilon$, provided the diagonal error matrix $E$ and error vector $\epsilon$ are known to meet certain specified conditions.
	
For our binary case, we consider two types of errors: \emph{erasures} ($\tilde{y}=E\anMtx[]x$, $E_{i,i}\in\{0,1\}$) and \emph{bit-flips} ($\tilde{y}=\anMtx[]x+\epsilon$, $\epsilon\in\Z_2^J$). We say that a binary Parseval frame $\Fm$ is \emph{robust to $m$ erasures} if for every diagonal binary matrix $E$ having at most $m$ zeros on the diagonal, the operator $E\anMtx[]$ admits a left inverse.  This is equivalent to the condition that the Hamming distance between any two vectors in the image of $\anMtx[]$ (or, equivalently, of the Gramian of $\Fm$) is at least $m+1$, since any pair of vectors that differ in only $m$ entries are indistinguishable if those entries are ``erased.''  By the linearity of  $\anMtx[]$, this is also equivalent to the condition that each nonzero vector in $\anMtx[]\Z_2^n$ has weight exceeding $m$.

On the other hand, we say that $\Fm$ is \emph{robust to $m$ bit-flips} if $\normz{\anMtx[]x_1-\anMtx[]x_2}\geq2m+1$ for all $x_1,x_2\in\Z_2^n$, $x_1 \ne x_2$. This notion of ``robustness to error'' implies the ability to identify each vector in the set $B:=\{\anMtx[]x+\epsilon:x\in\Z_2^n,\normz{\epsilon}\leq m\}$ as the (corrupted) image of a unique vector in $\Z_2^n$. Note that if $\normz{\anMtx[]x_1-\anMtx[]x_2}=2m$ for some pair $x_1,x_2\in\Z_2^n$, then there exist $m$-weighted error vectors $\epsilon_1$ and $\epsilon_2$ such that $\anMtx[]x_1+\epsilon_1=\anMtx[]x_2+\epsilon_2$.  Now suppose that for any distinct $y_1,y_2\in\anMtx[]\Z_2^n$, we have $\normz{y_1-y_2}\geq 2m+1$, and let $\tilde{y}\in B$; by the triangle inequality, there is exactly one point $y\in\anMtx[]\Z_2^n$ such that $\normz{y-\tilde{y}}\leq m$.  Thus, we may recover the intended signal $y$ by identifying the nearest point in $\anMtx[]\Z_2^n$ to $\tilde{y}$, and recovery of $x=\syMtx[]y$ follows.
	
	Again appealing to the linearity of $\Theta$,
both robustness conditions are expressed in terms of the minimum weight among nonzero vectors in the range of $\Theta$.
For Parseval frames,  the range of the analysis operator coincides with that of the Gramian, so it can be stated equivalently in terms of
the range of the Gramian.

\begin{defn}[Code weight of a Gramian or frame]
		Given an operator $G:\Z_2^J\rightarrow\Z_2^J$, the \emph{code weight} of $G$ is the value $\min_{y\in G(\Z_2^J)\setminus\{0\}}\normz{y}$. 
		
\end{defn}

In the following section, we compare $\Z_p^q$-frames with $\Z_{p^q}$-frames. The final major result in this paper is a proof that every binary Parseval $\Z_{p^q}$-frame is switching equivalent to a $\Z_p^q$-frame. We also include a number of examples in which the classes of $Z_{p^q}$-frames are mapped to their switching equivalent $\Z_p^q$ frames for select $p$'s and $q$'s and show that in addition to subsuming binary Parseval $\Z_{p^q}$-frames, there are examples of $\Z_p^q$-frames that outperform them as codes. 

\subsection{Comparing frames generated with $\Z_{p^q}$ vs.\ $\Z_p^q$}\label{sec:Examples} 
Fix $m\in\N$, and let $\Fm_1$ and $\Fm_2$ be switching equivalent binary Parseval frames. Theorem 4.9 in \cite{BodmannCampMahoney2014} establishes that this equivalence implies that $\Fm_1$ is robust to $m$ erasures if and only if $\Fm_2$ is. By the Gramian code weight characterization of robustness to each type of error, it follows that $\Fm_1$ is robust to $m$ bit-flips if and only if $\Fm_2$ is. Theorem 4.11 in \cite{BodmannLeRezaTobinTomforde2009} characterizes switching equivalence between binary Parseval frames as permutation equivalence between their Gramians $G_1$ and $G_2$:
	\[
	\Fm_1\sweq\Fm_2\textSpace{if and only if} G_1=P^*G_2P\text{ for some permutation matrix }P.
	\]
	Hence, for the purposes of evaluating binary Parseval group frames as codes, whether we are concerned about erasures or bit-flips, we may restrict our attention to permutation equivalence classes of the Gramians of such frames.

Applying the techniques in this paper, we have classified binary Parseval group frames for each of the groups below, using Gramians as class representatives. Recalling that the quantity of vectors in a group frame is given by the size of the group, and that the rank of the Gramian is the dimension of the inducing frame, we can directly compare the performance of several frames as error-correcting codes.  To facilitate comparing $\Z_{p^q}$ and $\Z_p^q$ for a given pair $p,q$, we combine details for the two groups in a single table; in each of the comparisons below, $\Z_p^q$-frames perform at least as well as $\Z_{p^q}$-frames, and they often outperform their $\Z_{p^q}$ counterparts. In fact, 
the exhaustive search of best performing codes associated with binary Parseval frames generated with $\Z_{p}^q$ is guaranteed to
be at least as good as the best codes generated with $\Z_{p^q}$, as shown in Theorem~\ref{thm:cyclicInProduct} below.

We now provide a sequence of results which culmintate in the proof of Theorem \ref{thm:cyclicInProduct}, which states that, given an odd prime $p$ and $q\in\N$, any binary Parseval $\Z_{p^q}$-frame is switching equivalent a binary Parseval $\Z_p^q$-frame.  
Practically, this reduces to showing that the Gramian of a binary Parseval $\Z_{p^q}$-frame satisfies the Gram characterization for a $\Z_p^q$-frame for some reindexing. We accomplish this by showing that the symmetric doubling orbits of $\Z_{p^q}$ partition those of $\Z_p^q$, in the sense that for each $n\in\Z_{p^q}$, the matrix $\rrep{\sdo{n}}$ can be written as the sum of matrices in $\{\rrep{\sdo{g}}\}_{g\in\Z_p^q}$.

The map which produces this reindexing is the inverse of the function $\indBij:\Z_p^q\to\Z_{p^q}$ given by 
\begin{equation}\label{eq:indBij}
\indBij(g):=\sum_{i=1}^qp^{i-1}g_i,
\end{equation}
where the arithmetic is carried out in $\Z_{p^q}$; this mapping is akin to converting from numbers written in base $p$. It is worth noting that for a given $i\in\{1, 2, \dots, {q-1}\}$ and $g\in\Z_p^q$, we have that $p^i$ divides $\indBij(g)$ if and only if the first $i$ entries of $g$ are  zero; if $p^i$ divides $\indBij(g)$ and $p^{i+1}$ does not, then the $j$-th entry of $g$, denoted $g_j$, is nonzero.

We recall a few fundamental properties of finite multiplicative groups in the context of this work.  For a given $n\in\N$, we may consider $\Z_n$ as the ring $(\Z_n,\cdot,+)$, in which case the subset of elements having multiplicative inverses forms the multiplicative group $\mgZpq[n]:=(\Z/n\Z)^{\times}$. The elements of $\Z_n$  that provide elements in $\mgZpq[n]$ are those coprime with $n$.

Here we shall denote the multiplicative subgroup of $\mgZpq[n]$ generated by element $k$ as $\msg{k}{n}:=\msg{k}{\Z_n}$.

\begin{prop}
Let $p,q,k\in\N$ with $p$ prime and $1< k<p$. 
Then $\abs{\msgk}=p^{q-1}\abs{\msgk[p]}$ and $x\in\msgk$ if and only if
$x\modp{p}\in\msgk[p]$.
\end{prop}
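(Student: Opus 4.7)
The plan is to exploit the cyclic structure of $\mgZpq$ for odd prime $p$, and to reduce the proposition to a $p$-adic order computation for $k^{p-1}$. Note first that the case $p=2$ is vacuous (no integer $k$ satisfies $1<k<2$), so we may assume $p$ is odd; then $|\mgZpq|=p^{q-1}(p-1)$ factors into coprime integers, and $\mgZpq$ admits an internal direct product decomposition $\mgZpq=H\times K$ with $|H|=p^{q-1}$ and $|K|=p-1$. The reduction-mod-$p$ homomorphism $\pi\colon\mgZpq\to\mgZpq[p]$ is surjective with $\ker\pi=H$, so by counting it restricts to an isomorphism $K\xrightarrow{\sim}\mgZpq[p]$.

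Writing $k=k_H k_K$ uniquely with $k_H\in H$ and $k_K\in K$, the orders of $k_H$ and $k_K$ are a power of $p$ and a divisor of $p-1$ respectively, hence coprime. A standard cyclic-group lemma then gives $\msgk=\langle k_H\rangle\times\langle k_K\rangle$, and under the isomorphism $K\cong\mgZpq[p]$ the factor $\langle k_K\rangle$ corresponds to $\msgk[p]$. Consequently $|\msgk|=|\langle k_H\rangle|\cdot|\msgk[p]|$, and the first assertion of the proposition is equivalent to the claim that $k_H$ generates all of $H$, i.e.\ $\operatorname{ord}(k_H)=p^{q-1}$.

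For this key step I would use a lifting-the-exponent computation. By Fermat, write $k^{p-1}=1+pc$ for some integer $c$. Induction on $j$, unwinding the odd-$p$ binomial expansion of $(1+pc)^{p^j}$, yields the $p$-adic valuation identity $v_p\bigl(k^{(p-1)p^j}-1\bigr)=j+1+v_p(c)$ for all $j\geq 0$. Provided $p\nmid c$, this gives $v_p\bigl(k^{(p-1)p^{q-2}}-1\bigr)=q-1<q$, while Euler's theorem supplies the upper bound $v_p\bigl(k^{(p-1)p^{q-1}}-1\bigr)\geq q$; together these force $\operatorname{ord}_{p^q}(k^{p-1})=p^{q-1}$, and hence $\operatorname{ord}(k_H)=p^{q-1}$.

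With the order identity $|\msgk|=p^{q-1}|\msgk[p]|$ in hand, the ``iff'' statement is automatic. The forward direction is immediate from $\pi$ being a homomorphism, so $\pi(\msgk)\subseteq\msgk[p]$. For the converse, $\pi^{-1}(\msgk[p])$ is a subgroup of $\mgZpq$ of cardinality $|\ker\pi|\cdot|\msgk[p]|=p^{q-1}|\msgk[p]|=|\msgk|$, and since $\msgk\subseteq\pi^{-1}(\msgk[p])$ the two subgroups coincide. The main obstacle I anticipate is cleanly justifying $p\nmid c$ from the hypothesis $1<k<p$; this is essentially a non-Wieferich condition, trivially verified for the small primes $p\in\{3,5\}$ relevant to the paper's applications but in full generality needing additional care.
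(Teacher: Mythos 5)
Your reduction is correct and genuinely different from the paper's argument. You decompose $\mgZpq\cong H\times K$ with $\abs{H}=p^{q-1}$, $\abs{K}=p-1$, identify $K$ with $\mgZpq[p]$ via reduction mod $p$, and reduce both assertions to the single claim $\operatorname{ord}(k_H)=p^{q-1}$, equivalently $p^2\nmid k^{p-1}-1$ (your condition $p\nmid c$); the counting argument you then give for the ``iff'' is fine. The paper proceeds differently: it first gets $\abs{\msgk}\le p^{q-1}\abs{\msgk[p]}$ by writing elements as $mp+t$, and then introduces $\gqrest:\msgk\to\Z_{p^{q-1}}\times\msgk[p]$, $k^j\mapsto(j,k^j)$, asserting it is a homomorphism that ``exhausts its range'' because the factor orders are coprime, whence $\abs{\msgk}\ge p^{q-1}\abs{\msgk[p]}$. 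But that map is well defined on $\msgk$ only if $k^i=k^j$ in $\Z_{p^q}$ forces $i\equiv j\pmod{p^{q-1}}$, i.e.\ only if $p^{q-1}$ already divides $\operatorname{ord}_{p^q}(k)$, which is exactly what is being proved (and if one instead reads $\gqrest$ as a homomorphism on exponents, its image size gives no lower bound on $\abs{\msgk}$). So the paper's proof silently assumes the conclusion at precisely the point where your argument demands $p\nmid c$; your approach has the merit of isolating that step explicitly.

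The obstacle you flag is therefore not a technicality to be smoothed over: $1<k<p$ does not imply $p\nmid c$, and the proposition as stated is false without it. For the Wieferich pair $k=2$, $p=1093$, $q=2$ one has $2^{p-1}\equiv 1\pmod{p^2}$, so $\operatorname{ord}_{p^2}(2)$ divides $p-1$ and $\abs{\msg{2}{p^2}}=\abs{\msg{2}{p}}$ rather than $p\,\abs{\msg{2}{p}}$; no proof, yours or the paper's, can close this gap without an added hypothesis such as $k^{p-1}\not\equiv 1\pmod{p^2}$. Granting that hypothesis, your lifting-the-exponent computation does finish the job (for odd $p$, $v_p(k^{(p-1)p^{j}}-1)=v_p(k^{p-1}-1)+j$, forcing $\operatorname{ord}_{p^q}(k^{p-1})=p^{q-1}$), and the remaining counting and preimage arguments are correct. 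The condition is immediately verified for the cases the paper actually uses ($k=2$ with small primes such as $p=3,5,7,11,13,17$), so the downstream results are unaffected, but as a statement about all $1<k<p$ the proposition needs that non-Wieferich hypothesis, and your write-up, unlike the paper's proof, makes this visible.
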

\begin{proof}
Note that $\mgZpq[p]\cong\Z_{p-1}$, since each nonzero element of $\Z_p$ is coprime with $p$. Recalling that a finite cyclic group of order $mn$ is isomorphic to $\Z_m\times\Z_n$ if $m$ and $n$ are coprime, we have that 
\[
\mgZpq\cong \Z_{p^{q-1}(p-1)} \cong \Z_{p^{q-1}}\times\Z_{p-1} \cong \Z_{p^{q-1}}\times\mgZpq[p].
\]
It follows that $\abs{\mgZpq[p]}=p-1$ and $\abs{\mgZpq}=p^{q-1}(p-1)$. We consider now the subgroups $\msgk[p]\leq\mgZpq[p]$ and $\msgk\leq\mgZpq$.

Note that $x\in\msgk$ implies that $x\modp{p}\in\msgk[p]$, so that each element in $\msgk$ may be written in the form $mp+t$ for some $m\in\{0,1,\ldots,p^{q-1}-1\}$ and some $t\in\msgk[p]$ considered as an element of $\Z$. It follows that $\abs{\msgk}\leq p^{q-1}\abs{\msgk[p]}$. We shall show equality holds by demonstrating that the reverse inequality holds; the resulting equality will imply that \emph{every} element of $\mgZpq$ of the form $mp+t$ as above is an element of $\msgk$, completing the characterization $\msgk[p]=\{x\modp{p}\mid x\in\msgk[p]\}$.

Let
\begin{align*}
\gqrest:\msgk &\to \Z_{p^{q-1}}\times\msgk[p]\\
        k^j&\mapsto{j\times k^j},
\end{align*}
where we consider $\Z_{p^{q-1}}\times\msgk[p]$ as a group in the natural way, inheriting its group operation componentwise. For $i,j\in\N$, we have
\[
\gqrest(k^i k^j)=\gqrest(k^{i+j})=(i+j,k^{i+j})=(i,k^i)(j,k^j)=\gqrest(k^i)\gqrest(k^j),
\]
and thus $\gqrest$ is a group homomorphism.
Since the orders of the cyclic groups  $\Z_{p^{q-1}}$ and $\msgk[p]$ are coprime, it follows that $\gqrest$ exhausts its range and that $\abs{\gqrest(\msgk)}=\abs{\Z_{p^{q-1}}}\abs{\msgk[p]}$. This implies $\abs{\msgk}\geq p^{q-1}\abs{\msgk[p]}$. We conclude that $\abs{\msgk}= p^{q-1}\abs{\msgk[p]}$ and that $x\in\msgk$ if and only if
$x\modp{p}\in\msgk[p]$.
\end{proof}

\begin{cor}\label{cor:cosetChar}
Let $p,q,k\in\N$ with $p$ prime and $1< k<p$.  Then for each $y\in\mgZpq$,
\[
\abs{y\msgk}=p^{q-1}\abs{\msgk[p]}
\]
and $x\in y\msgk$ if and only if
$x\modp{p}\in y\msgk[p]\modp{p}$.
\end{cor}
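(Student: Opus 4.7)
The plan is to deduce this corollary directly from the preceding proposition, which already handles the heart of the claim (the characterization of $\msgk$ by reduction mod $p$) and the order count $\abs{\msgk}=p^{q-1}\abs{\msgk[p]}$.

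First I would establish the cardinality statement. Since $\msgk$ is a subgroup of $\mgZpq$, left multiplication by $y\in\mgZpq$ is a bijection onto the coset $y\msgk$, giving $\abs{y\msgk}=\abs{\msgk}$. The preceding proposition then yields $\abs{y\msgk}=p^{q-1}\abs{\msgk[p]}$, which is the first claim. The analogous argument applied in $\mgZpq[p]$ gives $\abs{(y\modp{p})\msgk[p]}=\abs{\msgk[p]}$.

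Next I would prove the ``only if'' direction of the second claim. If $x\in y\msgk$, write $x=ys$ with $s\in\msgk$. By the proposition, $s\modp{p}\in\msgk[p]$, and reducing the equation $x=ys$ modulo $p$ gives $x\modp{p}=(y\modp{p})(s\modp{p})\in(y\modp{p})\msgk[p]$.

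The ``if'' direction I would handle by a counting argument rather than a direct construction. Let $T:=\{x\in\mgZpq:x\modp{p}\in(y\modp{p})\msgk[p]\}$. For each nonzero residue $r\in\Z_p$, there are exactly $p^{q-1}$ elements of $\Z_{p^q}$ reducing to $r$ mod $p$, all of which are automatically coprime to $p$ and hence lie in $\mgZpq$; applying this to each of the $\abs{\msgk[p]}$ residues in $(y\modp{p})\msgk[p]$ yields $\abs{T}=p^{q-1}\abs{\msgk[p]}$. The ``only if'' direction just proved shows $y\msgk\subseteq T$, and since both sets have the same finite cardinality $p^{q-1}\abs{\msgk[p]}$, they must coincide. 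This gives the reverse inclusion and completes the proof.

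I do not expect any real obstacle: the nontrivial structural work has already been done in the proposition, and the corollary is essentially a translation of that statement from the subgroup $\msgk$ to its cosets. The only subtlety worth stating carefully is the counting step showing $\abs{T}=p^{q-1}\abs{\msgk[p]}$, where one must note that ``coprime to $p^q$'' and ``coprime to $p$'' coincide, so no element of $T$ is inadvertently excluded.
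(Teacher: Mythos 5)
Your proposal is correct and follows essentially the same route as the paper: coset size equals subgroup size (then invoke the preceding proposition), the forward implication by reducing $x=ys$ modulo $p$, and the reverse inclusion by comparing cardinalities of $y\msgk$ with the set of units reducing into $(y\bmod p)\msgk[p]$. Your explicit count of $p^{q-1}$ lifts per nonzero residue merely fills in a step the paper asserts without detail.
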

\begin{proof}
Fix $y\in\mgZpq$. Since the cosets of a subgroup partition a group into equal sized sets, the preceding proposition yields
$\abs{y\msgk}=\abs{\msgk}=p^{q-1}\abs{\msgk[p]}$.

Now, if $x\in y\msgk$, then $x\modp{p}\in y\msgk[p]$. Since $\abs{\{x\in\mgZpq: x\modp{p}\in y\msgk[p]\}}=p^{q-1}\abs{\msgk[p]}$,

 It follows that 
\[
y\msgk=\{x\in\mgZpq: x\modp{p}\in y\msgk[p]\},
\]
and the proof is complete.
\end{proof}

\begin{thm}\label{thm:doublingOrbitElements}
Given $p,q,k\in\N$ with $p$ prime and $1< k<p$, let $x,y\in\Z_{p^q}$ and define nonnegative integers $x',y',j_x,j_y$ such that $x=x'p^{j_x}$, $y=y'p^{j_y}$, and $p$ divides neither $x'$ nor $y'$. Then $x\in y\msgk$ if and only if $j_x=j_y$ and $x'\modp{p}\in y'\msgk[p]\modp{p}$.
\end{thm}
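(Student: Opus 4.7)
The plan is to reduce everything to an application of Corollary~\ref{cor:cosetChar} after stripping off the common $p$-power. The forward direction uses that multiplication by $\msgk$ preserves $p$-adic valuation, while the converse uses the corollary at a lower modulus to lift a mod-$p$ congruence to a mod-$p^{q-j}$ congruence.

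First, I would observe that by hypothesis $p \nmid x'$ and $p \nmid y'$, so necessarily $x \ne 0$ and $y \ne 0$ in $\Z_{p^q}$, hence $0 \le j_x, j_y \le q-1$. Moreover, since $1 < k < p$ and $p$ is prime, every element of $\msgk$ has the form $k^m$ with $p \nmid k^m$. Thus if $x = y k^m$ in $\Z_{p^q}$, then comparing $p$-adic valuations of both sides (viewed as integers representing the class, computed before reduction) yields $j_x = j_y$; I would write this out carefully by noting $x'p^{j_x} \equiv y' p^{j_y} k^m \pmod{p^q}$ forces $j_x = j_y$ since otherwise one side would be divisible by a higher power of $p$ than the other, modulo $p^q$, contradicting $p \nmid x'$ or $p \nmid y'$.

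Setting $j := j_x = j_y$, the congruence $x' p^j \equiv y' p^j k^m \pmod{p^q}$ is equivalent to
\[
x' \equiv y' k^m \pmod{p^{q-j}}.
\]
Reducing both sides modulo $p$ gives $x' \equiv y' k^m \pmod p$, so $x' \bmod p \in y' \msgk[p] \bmod p$, completing the forward direction.

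For the converse, assume $j_x = j_y = j$ and $x' \bmod p \in y'\msgk[p] \bmod p$. Since $p \nmid x'$ and $p \nmid y'$, the residues of $x'$ and $y'$ modulo $p^{q-j}$ lie in $\mgZpq[p^{q-j}]$. I would apply Corollary~\ref{cor:cosetChar} with $q$ replaced by $q-j$ (this is valid since $j \le q-1$, hence $q - j \ge 1$), which yields
\[
x' \in y' \msg{k}{p^{q-j}} \text{ in } \Z_{p^{q-j}}
\quad\Longleftrightarrow\quad
x' \bmod p \in y' \msg{k}{p} \bmod p.
\]
By hypothesis the right-hand side holds, so there exists $m$ with $x' \equiv y' k^m \pmod{p^{q-j}}$. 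Multiplying by $p^j$ recovers $x = x' p^j \equiv y' p^j k^m = y k^m \pmod{p^q}$, i.e., $x \in y\msgk$.

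The main obstacle I anticipate is purely bookkeeping: being careful that Corollary~\ref{cor:cosetChar} is applied at the correct modulus $p^{q-j}$, and that the element $k$ still satisfies $1 < k < p$ when passing to the smaller ring (which it does, as $k$ is unchanged). The edge case $j = q-1$ reduces the corollary to a tautology over $\Z_p$, and the case $x = 0$ or $y = 0$ is excluded by the standing hypothesis $p \nmid x', y'$.
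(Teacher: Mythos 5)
Your proof is correct and follows essentially the same route as the paper: strip off the common power $p^{j}$ to reduce the statement to a congruence modulo $p^{q-j}$, and invoke Corollary~\ref{cor:cosetChar} at that lower modulus to pass between the mod-$p^{q-j}$ and mod-$p$ statements, then multiply back by $p^{j}$. The only (harmless) difference is that in the forward direction you reduce modulo $p$ directly instead of citing the corollary, which is a slight simplification of the paper's argument.
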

\begin{proof}
Suppose $x\in y\msgk$, so that $x'p^{j_x}=k^ly'p^{j_y}$ for some $l\in\N$. Then $j_x=j_y$, since $k$ and $p$ are coprime and $p$ is coprime with each of $x'$ and $y'$. Next, set $r:=j_x=j_y$ and write 
\begin{equation}\label{eq:xyDecomposed1}
x'p^r\equiv k^ly'p^r\modp{p^q}.
\end{equation}
Since $x$ and $y$ may be seen as elements in $p^r\Z_{p^p}\cong\Z_{p^{q-r}}$, we may also identify $x'$ and $y'$ as elements of $\mgZpq[p^{q-r}]\leq Z_{p^{q-r}}$ and note that congruence (\ref{eq:xyDecomposed1}) implies
\begin{equation*}
x'\equiv k^ly'\modp{p^{q-r}}.
\end{equation*}
Then, using $y'\in\mgZpq[p^{q-r}]$, Corollary \ref{cor:cosetChar} yields $x'\modp{p}\in y'\msgk[p]\modp{p}$.

Conversely, assume $j_x=j_y=:r$ and $x'\modp{p}\in y'\msgk[p]\modp{p}$. Then the conditions of Corollary \ref{cor:cosetChar} are met for $x,y\in\mgZpq[p^{q-r}]$ and $x'\equiv k^{l'}y'\;\;\modp{p^{q-r}}$ for some $l'\in\N$. Embedding $y'\msgk[p^{q-r}]$ into $Z_{p^q}$ by $g\mapsto p^r g$ for $g\in y'\msgk[p^{q-r}]$, we have that $x=x'p^r\equiv 2^{l'}y'p^r\;\modp{p^q}=2^{l'}y$.
\end{proof}

The following lemma makes precise the claim that the map $\indBij$ given by (\ref{eq:indBij}) maps the doubling orbits of $\Z_p^q$ into those of $\Z_{p^q}$.
\begin{lemma}\label{lem:phiLinear}
Given $p,q\in\N$ with $p$ an odd prime, let $x\in\Z_{p^q}$. If $g\in\indBijInv(x\msgtwo)$, then for $h\in\Z_p^q$, we have that $\indBij(g\msgtwo[p]+h)\subseteq x\msgtwo+\indBij(h)$. As a consequence, $\indBij(\indBijInv(x\msgtwo)+h)= x\msgtwo+\indBij(h)$.
\end{lemma}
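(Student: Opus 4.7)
My plan is to verify the first inclusion by a direct base-$p$ computation of $\indBij(2^j g + h) - \indBij(h)$ in $\Z_{p^q}$ and then invoke Theorem~\ref{thm:doublingOrbitElements} to identify the result as a member of $x\msgtwo$; the equality in the consequence will then follow by comparing cardinalities. The map $\indBij$ is essentially base-$p$ expansion, hence a bijection of sets but not a group homomorphism, so addition in $\Z_p^q$ differs from addition in $\Z_{p^q}$ by carries. The crux of the argument will be to show that these carry terms have $p$-adic valuation large enough not to disturb the doubling-orbit structure. The trivial case $g = 0$ forces $x = 0$ and reduces both sides to $\{\indBij(h)\}$, so I would assume $g \neq 0$ throughout.

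Now let $r$ be the smallest index with $g_{r+1} \neq 0$, equivalently the largest integer for which $p^r$ divides $\indBij(g)$. Then $(2^j g)_i = 0$ for every $i \leq r$, so no carrying occurs in the first $r$ coordinates when forming $2^j g + h$ in $\Z_p^q$. Writing each coordinate sum as $(2^j g)_i + h_i = c_i p + t_i$ with $c_i \in \{0,1\}$ and $t_i \in \{0, \dots, p-1\}$, I would derive in $\Z_{p^q}$
\[
y_j := \indBij(2^j g + h) - \indBij(h) \;=\; \indBij(2^j g) - \sum_{i>r} c_i p^i \;=\; p^r\bigl(U_j - p V_j\bigr),
\]
where $U_j \equiv 2^j g_{r+1} \pmod p$ is a unit modulo $p$ and $V_j$ is an integer. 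Consequently $p^r$ exactly divides $y_j$ and $y_j/p^r \equiv 2^j g_{r+1} \pmod p$. Since $g \in \indBijInv(x\msgtwo)$, I can write $\indBij(g) = 2^{m_0} x$, so $x = p^r x'$ with $x' \equiv 2^{-m_0} g_{r+1} \pmod p$ and therefore $y_j/p^r \equiv 2^{j+m_0}(x/p^r) \pmod p$. Both criteria of Theorem~\ref{thm:doublingOrbitElements} with $k = 2$ are then satisfied: matching $p$-adic valuations, and unit parts in the same $\msgtwo[p]$-coset modulo $p$. Hence $y_j \in x\msgtwo$ and $\indBij(2^j g + h) \in x\msgtwo + \indBij(h)$, which proves the first inclusion.

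For the consequence, specializing to $j = 0$ and taking the union over $g \in \indBijInv(x\msgtwo)$ yields $\indBij(\indBijInv(x\msgtwo) + h) \subseteq x\msgtwo + \indBij(h)$. Both sides have cardinality $|x\msgtwo|$: the map $g \mapsto \indBij(g + h)$ is a bijection from $\Z_p^q$ onto $\Z_{p^q}$, so the left side has size $|\indBijInv(x\msgtwo)| = |x\msgtwo|$, while translation by $\indBij(h)$ preserves the size of $x\msgtwo$ on the right. Equality is therefore forced, and the main obstacle throughout is the carry bookkeeping that establishes the factorization $y_j = p^r(U_j - pV_j)$ with a unit leading term --- after that, Theorem~\ref{thm:doublingOrbitElements} and the cardinality count finish the argument mechanically.
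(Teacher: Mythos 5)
Your proof is correct and follows essentially the same route as the paper: track the base-$p$ carries to show the correction terms are divisible by $p^{r+1}$, conclude via Theorem~\ref{thm:doublingOrbitElements} that the valuation and unit part modulo $p$ place $\indBij(2^jg+h)-\indBij(h)$ in $x\msgtwo$, and finish the set equality by a cardinality count using the bijectivity of $\indBij$. The only difference is organizational: you treat $2^jg+h$ for arbitrary $g\in\indBijInv(x\msgtwo)$ in one computation, whereas the paper first handles $h=0$ by iterated doubling and then reduces the general case to $g=\indBijInv(x)$.
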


\begin{proof}
We begin by proving the lemma for the case that $h=0$.
Let $x$, $p$ and $q$ be as in the hypothesis, and let $g=\indBijInv(x)$. Since $\msgtwo[p]$ is cyclic, it suffices to show that for each $g'\in\indBijInv(x\msgtwo)$ there exists $k\in\N$ such that $\indBij(2g')=2^kx$; since $\msgtwo$ is cyclic, it suffices to demonstrate this for the case $g'=g$.

If $x=0$ then $g=(0)_{i=1}^q$, and the claim is shown; assume, then, that $x\neq0$ and define nonnegative integers $x'$ and $r$ such that $x=x'p^r$ and $p$ does not divide $x'$. Note that $r$ gives the quantity of leading zeros in the sequence $(g_i)_{i=1}^q$.

 Expressing this equality in terms of the definition of $\indBij$,
\[
\sum_{i=1}^q p^{i-1}(2g_i\,\modp{p})\equiv2^kx\; \modp{p^q},
\]
where $2g_i\,\modp{p}$ is considered as an element of $\Z$. For each $i\in\numto{q}$, we may express $2g_i\,\modp{p}$ as $2g_i-\delta_i p$ for some $\delta_i\in\{0,1\}$, since the value is either $2g_i$ or $2g_i-p$. Since $i\leq r$ implies $g_i=0$, it follows that $\delta_i=0$ for such  $i$. Thus
\begin{align*}
\indBij(2g)
    &\equiv \sum_{i=1}^q p^{i-1}(2g_i-\delta_i p) \;\;\;\modp{p^q}\\
    &\equiv 2\sum_{i=1}^q p^{i-1}g_i-\sum_{i=1}^q\delta_i p^i \;\;\;\modp{p^q}\\
    &\equiv 2\indBij(g)-\sum_{i=r+1}^q\delta_i p^i \;\;\;\modp{p^q}.
\end{align*}
It follows that $2\indBij(g)-\sum_{i=r+1}^q\delta_i p^i\equiv 2\indBij(g)\;\modp{p^r}$. The conditions given in Theorem \ref{thm:doublingOrbitElements} are thus satisfied for $x$ and $\indBij{(2g)}$, implying $\indBij(2g)\in2\indBij(g)\msgtwo=2x\msgtwo=x\msgtwo$. We conclude that $\indBij(2^jg)\in x\msgtwo$ for all $j\in\N$.

We now consider the general case, letting $h\in\Z_p^q$. Again, since $\msgtwo[p]$ and $\msgtwo$ are cyclic, we may assume that $g=\indBijInv(x)$.  We must show that $\indBij(g+h)=2^j x+\indBij(h)$ for some $j$.  Similar to the $h=0$ case, we define $\delta_i'$ so that $g_i+h_i-\delta_i' p\in\{0,1,\ldots,p-1\}$ for each $i\in\numto{q}$. Recalling that the value $r$ gives the number of leading zeros of $g$, we note that $\delta_i=\delta_i'=0$ for $i\leq r$.  Then
\begin{align*}
    \indBij(g+h)
        &\equiv \sum_{i=1}^q p^{i-1}(g_i+h_i-\delta_i' p)\;\;\;\modp{p^q}\\
        &\equiv \indBij(g)+\indBij(h)-\sum_{i=1}^q \delta_i' p^i\;\;\;\modp{p^q}\\
        &\equiv x-\sum_{i=r+1}^q \delta_i' p^i+\indBij(h)\;\;\;\modp{p^q}\\
        &\in x\msgtwo+\indBij(h),
\end{align*}
by Theorem \ref{thm:doublingOrbitElements}, since $x-\sum_{i=r+1}^q \delta_i' p^i\equiv x\; \modp{p^r}$. We conclude that $\indBij(g\msgtwo[p]+h)\subseteq x\msgtwo+\indBij(h)$ for all $g\in\indBijInv(x\msgtwo)$ and $h\in\Z_p^q$.

It follows that $\indBij(\indBijInv(x\msgtwo)+h)\subseteq x\msgtwo+\indBij(h)$. Set equality follows from the fact that $\indBij$ is a bijection, since both sides of the inclusion have the same number of elements.
\end{proof}

We are ready to prove the section's main result. We wish to show that for each Gramian of a binary Parseval $\Z_{p^q}$-frame, the corresponding Gramian over $\Z_2^{\Z_p^q}$ obtained from the reindexing given by $\indBijInv$ is in the group algebra $\Z_2[\Z_p^q]$; this is sufficient to show that the underlying frame is a binary Parseval $\Z_p^q$-frame, since the Gramian retains idempotence, symmetry and the weights of range vectors under switching.

\begin{thm}\label{thm:cyclicInProduct}
Let $p,q\in\N$ with $p$ an odd prime and define $\indBij:\Z_p^q\to\Z_{p^q}$ by $\indBij(g):=\sum_{i=1}^qp^{i-1}g_i$, carrying out the arithmetic in $\Z_{p^q}$.
If $\Fm=\{f_x\}_{x\in\Z_{p^q}}$ is a binary Parseval $\Z_{p^q}$-frame for $\Z_2^n$, then $\Fm':=\{f_{\indBijInv(x)}\}_{x\in\Z_{p^q}}$ is a binary Parseval $\Z_p^q$-frame. 
\end{thm}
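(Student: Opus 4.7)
By Corollary~\ref{cor:gammaFrameIFFGramInAlgebra}, it suffices to show that the Gramian $G'$ of $\Fm'$, viewed as an operator on $\Z_2^{\Z_p^q}$, lies in the group algebra $\Z_2[\{R_u\}_{u\in\Z_p^q}]$ of the right regular representation of $\Z_p^q$. Symmetry, idempotence, and the existence of an odd diagonal entry transfer from $G$ to $G'$ at once, since $\Fm'$ consists of the same vectors as $\Fm$ in a different order, so $G'=P^{*}GP$ for the permutation matrix $P$ implementing $\indBij$. The frame operator is independent of the labelling of its vectors, so $\Fm'$ remains Parseval for free.

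By Theorem~\ref{thm:gramInAlgebra} applied to $\Fm$, there is a coefficient function $\eta:\Z_{p^q}\to\Z_2$ with $G_{x,y}=\eta(y-x)$ (arithmetic in $\Z_{p^q}$). The reindexed Gramian therefore satisfies
\[
G'_{a,b}\;=\;G_{\indBij(a),\indBij(b)}\;=\;\eta\bigl(\indBij(b)-\indBij(a)\bigr),\qquad a,b\in\Z_p^q.
\]
To place $G'$ in the claimed algebra, I will verify that this entry depends only on $b-a\in\Z_p^q$, and then read off $\eta'(u):=\eta(\indBij(u))$ as a coefficient function for which $G'=\sum_{u\in\Z_p^q}\eta'(u)R_u$.

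The decisive step applies Lemma~\ref{lem:phiLinear} with $g_0:=b-a$ and $h:=a$. Since $\indBij(g_0)\in\indBij(g_0)\msgtwo$ trivially, the lemma yields $\indBij(g_0+a)\in\indBij(b-a)\msgtwo+\indBij(a)$, that is, $\indBij(b)-\indBij(a)\in\indBij(b-a)\msgtwo$ inside $\Z_{p^q}$. Because $G$ is idempotent, Proposition~\ref{prop:convInvariance} gives $\eta\convl\eta=\eta$; and since $\Z_{p^q}$ is abelian of odd order, Theorem~\ref{thm:uniqueRoots} forces $\eta$ to be constant on each doubling orbit $y\msgtwo$. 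Hence $\eta\bigl(\indBij(b)-\indBij(a)\bigr)=\eta\bigl(\indBij(b-a)\bigr)=\eta'(b-a)$, completing the verification that $G'\in\Z_2[\{R_u\}_{u\in\Z_p^q}]$.

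The main obstacle is that $\indBij$ is not a group homomorphism, so $\indBij(b)-\indBij(a)$ and $\indBij(b-a)$ are typically unequal in $\Z_{p^q}$. The role of Lemma~\ref{lem:phiLinear}, itself built on the coset analysis of $\msgtwo$ inside $\mgZpq$, is precisely to confine this discrepancy to a single doubling orbit, where the idempotent $\eta$ cannot detect it.
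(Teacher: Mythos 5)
Your proposal is correct and follows essentially the same route as the paper: both reduce the claim (via Corollary~\ref{cor:gammaFrameIFFGramInAlgebra} / Theorem~\ref{thm:GramCharacterization}) to showing the reindexed Gramian lies in $\Z_2[\{R_u\}_{u\in\Z_p^q}]$, invoke Theorem~\ref{thm:uniqueRoots} to get $\eta$ constant on doubling orbits, and use Lemma~\ref{lem:phiLinear} to absorb the failure of $\indBij$ to be a homomorphism into a single orbit. The only difference is presentational: the paper phrases the key identity as the set equality $x\msgtwo+z=\indBij(\indBijInv(x\msgtwo)+\indBijInv(z))$ intertwining orbit-sums of regular representation matrices, while you verify the equivalent entrywise statement $G'_{a,b}=\eta(\indBij(b-a))$ directly.
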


\begin{proof}
Let $\Fm=\{f_x\}_{x\in\Z_{p^q}}$ be a binary Parseval $\Z_{p^q}$-frame for $\Z_2^n$. Denote the frame's analysis matrix and Gramian by $\anMtx$ and $G$, respectively, and let $\anMtx[\Fm']$ and $G'$ denote those of $\Fm'$.  Since $G$ and $G'$ are switching equivalent, $G'$ inherits symmetry, idempotence and column weights from $G$. By the characterization of binary Parseval group frames given by Theorem \ref{thm:GramCharacterization},  it is then left to show that $G'$ is a element of the group algebra of the right regular representation of $\Z_p^q$, denoted $\Z_2[\{R_g'\}]$.

Let $R:=\{R_x\}_{x\in\Z_{p^q}}$ be the right regular representation of $\Z_{p^q}$ and let $\eta$ be the binary coefficient function such that $G=\sum_{x\in\Z_{p^q}}\eta(x)R_x$, as guaranteed by Theorem \ref{thm:coefficientChar}. Since $\Z_{p^q}$ is an odd-ordered abelian group and $\eta$ is idempotent under convolution, Theorem \ref{thm:uniqueRoots} provides that $\eta$ is constant on cosets of the the multiplicative subgroup $\msgtwo$. We shall demonstrate that $\indBij$ induces an isomorphism between the sets $\{\sum_{y\in x\msgtwo}R_y\}_{x\in\Z_{p^q}}$ and $\{\sum_{y\in x\msgtwo}R_{\indBijInv(y)}'\}_{x\in\Z_{p^q}}$, 

Let $\funBij:\Z_2^{Z_p^q}\to\Z_2^{Z_{p^q}}$ be defined on standard basis elements by $\funBij e_g'=e_{\indBij(g)}$, where we use the $'$ (prime) to distinguish basis elements of the domain from those in the range. We wish to show that for each $x,z\in\Z_{p^q}$, the following holds:
\begin{equation}\label{eq:isoRreg1}
    \sum_{y\in x\msgtwo}R_ye_z
    =\funBij\bigg(\sum_{y\in x\msgtwo}R_{\indBijInv(y)}'e_{\indBijInv(z)}'\bigg).
\end{equation}
As described in Section~\ref{sec:regReps}, we may explicitly express the image a function $\varphi$ under $\rrep{y}$ by $\rrep{y}\varphi: z\mapsto \varphi(y+z)$, and equation (\ref{eq:isoRreg1}) becomes
\begin{align*}\label{eq:isoRreg2}
    \sum_{y\in x\msgtwo}e_{y+z}
    &=\funBij\bigg(\sum_{y\in x\msgtwo}e_{\indBijInv(y)+\indBijInv(z)}'\bigg)\\
    &=\sum_{y\in x\msgtwo}e_{\indBij(\indBijInv(y)+\indBijInv(z))}'.
\end{align*}
Thus, we are left to show that $x\msgtwo+z=\indBij(\indBijInv(x\msgtwo)+\indBijInv(z))$ for any $x,z\in\Z_{p^q}$. Taking $h:=\indBijInv(z)$, this is exactly the content of Lemma \ref{lem:phiLinear}, and the proof is complete.
\end{proof}

We illustrate this statement with some examples.
It is worth noting ahead of the examples that there is an important distinction between the symmetric doubling orbit partitionings of $\Z_p^q$ and $\Z_{p^q}$. For an odd prime $p$, it is a simple exercise to show that each $\sdo{x}$ in $\Z_p^q$ that is not $\sdo{e}$  has the same order as the symmetric doubling orbit of $1$ in $\Z_p$.
In contrast, according to Theorem~\ref{thm:doublingOrbitElements}, $\Z_{p^q}$ partitions into $kq$ nontrivial orbits
 for some $k\in\N$, $k$ of each of $q$ different sizes. Since automorphisms preserve symmetric doubling orbits (Proposition \ref{prop:permutationEquivGrams}), they also preserve orbit size; it follows that the computational savings offered by applying Corollary \ref{cor:automorphismsPreserveSDOs} as in Example \ref{ex:classifyZ3up2} do not apply or are significantly reduced when the group under consideration is $\Z_{p^q}$. In fact, for the values of $p$ and $q$ we explore here, the $2^q$ binary Parseval $\Z_{p^q}$-frame unitary equivalence classes promised by Corollary \ref{cor:qtyGrams} coincide with automorphic switching equivalence classes.  As we note in our closing remarks regarding $\Z_{17^q}$, this does not hold in general.
Of course, the number of symmetric doubling orbits of $\Z_{p^q}$ grows linearly in $q$ and may be considered as subsets of symmetric doubling orbits of $Z_p^q$ (Theorem \ref{thm:cyclicInProduct}), whose number grows exponentially as $(p^q-1)/\abs{\sdo{g}}$ for any $g\in\Z_p^q\backslash\{e\}$.

The next step is to compute the code weight of each of the Gramians, and we pause here to emphasize the computational savings made available by the methods developed thus far. Results in \cite{BodmannCampMahoney2014} justify the use of Gramians as class representatives of binary Parseval frames as codes; for a group of size $k$, the naive upper bound of $2^{k^2}$ binary matrices thereby drops to $2^{\frac{1}{2}(k^2-1)}$ symmetric binary matrices with at least one odd column. Theorem \ref{thm:gramInAlgebra} in this paper puts our Gramians in $\Z_2[\{R_g\}]$, a set of order $2^k$. In the case of abelian $\Gamma$ with unique square roots, Corollary \ref{cor:qtyGrams} gives the number of Gramians of binary Parseval $\Gamma$-frames exactly as $2^{\abs{\Gamma'}-1}$, where $\abs{\Gamma'}\leq\frac{1}{2}(k+1)$ is the quantity of symmetric doubling orbits of $\Gamma$. 
Thus, for a given abelian group $\Gamma$ of odd order $k$, we must process no more than $2^{\abs{\Gamma'}-1}\leq2^{\frac{1}{2}(k-1)}$ Gramians to determine code weights, and these matrices can be computed directly. We may process even fewer Gramians if we reduce the set to representatives of automorphic switching equivalence classes. Note that in determining the code weight of a $k\times k$ Gramian $G$, the $2^{\rank(G)}$ vectors in the operator's range may be obtained by taking all linear combinations of up to $\rank(G)$ columns of $G$, for a total $\sum_{i=1}^{\rank(G)}\nCksm{k}{i}$ operations; comparing this combinatorial problem with the algorithm above, it is evident that computational savings result from any reduction in the quantity of Gramians we are to process.

    Let us first consider the work for ${\mathbb Z}_3^2$ and $\Z_3^3$ to illustrate this.
    
    \begin{ex} \label{ex:costbenefit}
    The nontrivial Gramians in Example \ref{ex:classifyZ3up2}  turn out to have ranks $3$ ($m=1$), $5$ ($m=2$), and $7$ ($m=3$), and thus require $\nCksm{9}{3}$, $\nCksm{9}{5}$, and $\nCksm{9}{7}$ computations to exhaust linear combinations of columns as described. For the cost of producing a $3\times4$ multiplication table and a computing a handful of table look-ups and comparisons, we partitioned the fourteen nontrivial Gramians $I+\sum_{i=1}^m\rrep{\sdo{g_i}}$ into three classes; the return on that cost in the form of having fewer Gramians to weight-check was the reduction from $4\cdot\nCksm{9}{3}+6\cdot\nCksm{9}{5}+4\cdot\nCksm{9}{7}=1236$ computations to $\nCksm{9}{3}+\nCksm{9}{5}+\nCksm{9}{7}=246$. 
    
    The group ${\mathbb Z}_3^3$ has 14 symmetric doubling orbits, including $\sdo{e}$. 
    The characterization of automorphic switching equivalence classes given by Corollary \ref{cor:automorphismsPreserveSDOs}, provides that the $2^{13}$ unique Gramians of binary Parseval $\Z_3^3$-frames reduce to only thirty representatives. The resulting computational savings are substantial even before taking into account the cost of finding code weights, which has grown to $\sum_{i=1}^{\rank(G)}\nCksm{27}{i}$ operations per Gramian.
    \end{ex}

\subsubsection{Format of comparison tables}
Each comparison table contains representatives of switching equivalence classes of binary Parseval $\Gamma$-frames for each of the groups we compare. For each such class, we provide the rank and code weight of the representing Gramian. After the first table, we exclude the trivial Gramians given by the identity and the matrix of all ones; the Gramians themselves are encoded as indexing elements of their symmetric doubling orbit summands. Whenever a class in $\Z_{p^q}$ matches the performance of a class in $\Z_p^q$, the two classes are described in the same row of the associated table.  In many such cases, the two classes represent switching equivalent frames.

In the comparison of $\Z_3^2$ and $\Z_{9}$, for example, the Gramians given by $G_1=\sum_{ g\in J_1}\rrep{\sdo{g}}$ with $J_1=\{\nCksm{0}{0},\nCksm{1}{0},\nCksm{1}{1},\nCksm{0}{1}\}\subset\Z_3^2$ and $G_2=\sum_{g\in J_2}\rrep{\sdo{g}}$ with $J_2=\{0,1\}\subset\Z_9$
 each have rank 7 and code weight 2, and thus are listed in the same row.

\begin{ex}[$\Z_{9}$ vs. $\Z_3^2$]\label{ex:Z3up2vsZ9}
The symmetric doubling orbit partitioning of $\Z_{9}$ consists of $\sdo{0}$, $\sdo{3}=\{3,6\}$, and  $\sdo{1}=\{1,2,4,5,7,8\}$. In this case, each of the four binary Parseval $\Z_9$-frames is switching equivalent to one of the five binary Parseval $\Z_3^2$-frames delineated in Example \ref{ex:classifyZ3up2}. Apart from the trivial cases of the Gramian being the identity matrix or the matrix of all $1$'s, this correspondence
\[
		I+\rrep{\sdo{\nCksm{1}{0}}}=I+\rrep{\sdo{3}}\text{ and }\;\;
		I+\rrep{\sdo{\nCksm{1}{0}}}\!\!+\rrep{\sdo{\nCksm{1}{1}}}\!\!+\rrep{\sdo{\nCksm{0}{1}}}=I+\rrep{\sdo{1}},
		\]
assumes an appropriate identification of group elements. Table~\ref{fig:Z3up2vsZ9} provides the implications for the performance of codes.
\end{ex}
\begin{table}[h]
\begin{tabular}{|c|c|c|c|}
\hline
$\begin{smallmatrix}\text{Gram}\\\text{rank}\end{smallmatrix}$& $\begin{smallmatrix}\text{Code}\\\text{weight}\end{smallmatrix}$ 
&$J\subset\Z_3^2$&$J\subset\Z_9$\\
\hline
1&1&$\{\nCksm{0}{0},\nCksm{1}{0},\nCksm{1}{1},\nCksm{0}{1},\nCksm{1}{2}\}$&$\{0,1,3\}$\\
\hline
3&3&$\{\nCksm{0}{0},\nCksm{1}{0}\}$&$\{0,3\}$\\
\hline
5&3&$\{\nCksm{0}{0},\nCksm{1}{0},\nCksm{1}{1}\}$&--\\
\hline
7&2&$\{\nCksm{0}{0},\nCksm{1}{0},\nCksm{1}{1},\nCksm{0}{1}\}$&$\{0,1\}$\\
\hline
9&1&$\{\nCksm{0}{0}\}$&$\{0\}$\\
\hline
\end{tabular}
\caption{Comparing Parseval frames with Gramians $G=\sum_{g \in J} R_{\sdo g}$ 
obtained from groups $\Z_3^2$ and $\Z_{9}$, together with their code weights. See Example \ref{ex:Z3up2vsZ9} for details.}\label{fig:Z3up2vsZ9}
\end{table}

\begin{table}[h!]
\begin{tabular}{|c|c|c|c|}
\hline
$\begin{smallmatrix}\text{Gram}\\\text{rank}\end{smallmatrix}$& $\begin{smallmatrix}\text{Code}\\\text{weight}\end{smallmatrix}$ 
&$J\subset\Z_3^3$&$J\subset\Z_{27}$\\
\hline
3 & 9& $\left\{\!  \nCkGen{0\\0\\0}\!,\! \nCkGen{0\\0\\1}\!,\! \nCkGen{0\\1\\0}\!,\! \nCkGen{0\\1\\1}\!,\! \nCkGen{0\\1\\2}\!\right\}$ & \{0,3,9\} \\ \hline
5 & 9& $\left\{\!  \nCkGen{0\\0\\0}\!,\! \nCkGen{0\\0\\1}\!,\! \nCkGen{0\\1\\0}\!,\! \nCkGen{1\\0\\0}\!,\! \nCkGen{1\\0\\1}\!,\! \nCkGen{1\\0\\2}\!,\! \nCkGen{1\\1\\0}\!,\! \nCkGen{1\\2\\0}\!\right\}$ & -- \\ \hline
7 & 6& $\left\{\!  \nCkGen{0\\0\\0}\!,\! \nCkGen{0\\0\\1}\!,\! \nCkGen{0\\1\\1}\!,\! \nCkGen{0\\1\\2}\!,\! \nCkGen{1\\0\\0}\!,\! \nCkGen{1\\0\\1}\!,\! \nCkGen{1\\0\\2}\!,\! \nCkGen{1\\1\\0}\!,\! \nCkGen{1\\1\\1}\!,\! \nCkGen{1\\2\\0}\!,\! \nCkGen{1\\2\\1}\!\right\}$ & \{0,1,9\} \\ \hline
7 & 9& $\left\{\!  \nCkGen{0\\0\\0}\!,\! \nCkGen{0\\0\\1}\!,\! \nCkGen{0\\1\\0}\!,\! \nCkGen{0\\1\\2}\!,\! \nCkGen{1\\0\\1}\!,\! \nCkGen{1\\1\\0}\!,\! \nCkGen{1\\1\\1}\!\right\}$ & -- \\ \hline
9& 3& $\left\{\!  \nCkGen{0\\0\\0}\!,\! \nCkGen{0\\0\\1}\!\right\}$ & \{0,9\} \\ \hline 
9 & 6& $\left\{\!  \nCkGen{0\\0\\0}\!,\! \nCkGen{0\\1\\0}\!,\! \nCkGen{0\\1\\2}\!,\! \nCkGen{1\\0\\2}\!,\! \nCkGen{1\\1\\0}\!,\! \nCkGen{1\\1\\1}\!\right\}$ &  -- \\ \hline
9 & 8& $\left\{\!  \nCkGen{0\\0\\0}\!,\! \nCkGen{0\\0\\1}\!,\! \nCkGen{0\\1\\0}\!,\! \nCkGen{1\\0\\0}\!,\! \nCkGen{1\\0\\1}\!,\! \nCkGen{1\\0\\2}\!,\! \nCkGen{1\\1\\0}\!,\! \nCkGen{1\\1\\1}\!,\! \nCkGen{1\\1\\2}\!,\! \nCkGen{1\\2\\0}\!\right\}$ & -- \\ \hline
11 & 3& $\left\{\!  \nCkGen{0\\0\\0}\!,\! \nCkGen{0\\0\\1}\!,\! \nCkGen{0\\1\\1}\!,\! \nCkGen{0\\1\\2}\!,\! \nCkGen{1\\0\\0}\!,\! \nCkGen{1\\0\\1}\!,\! \nCkGen{1\\1\\0}\!,\! \nCkGen{1\\1\\1}\!,\! \nCkGen{1\\2\\1}\!\right\}$ & -- \\ \hline
11 & 6& $\left\{\!  \nCkGen{0\\0\\0}\!,\! \nCkGen{0\\0\\1}\!,\! \nCkGen{0\\1\\0}\!,\! \nCkGen{0\\1\\2}\!,\! \nCkGen{1\\0\\1}\!\right\}$ & -- \\ \hline
11 & 6& $\left\{\!  \nCkGen{0\\0\\0}\!,\! \nCkGen{0\\0\\1}\!,\! \nCkGen{0\\1\\0}\!,\! \nCkGen{0\\1\\2}\!,\! \nCkGen{1\\0\\0}\!,\! \nCkGen{1\\0\\2}\!,\! \nCkGen{1\\1\\0}\!,\! \nCkGen{1\\1\\2}\!,\! \nCkGen{1\\2\\0}\!\right\}$ & -- \\ \hline
13 & 3& $\left\{\!  \nCkGen{0\\0\\0}\!,\! \nCkGen{0\\0\\1}\!,\! \nCkGen{0\\1\\0}\!,\! \nCkGen{1\\0\\1}\!,\! \nCkGen{1\\0\\2}\!,\! \nCkGen{1\\1\\0}\!,\! \nCkGen{1\\1\\2}\!,\! \nCkGen{1\\2\\0}\!\right\}$ & -- \\ \hline
13 & 4& $\left\{\!  \nCkGen{0\\0\\0}\!,\! \nCkGen{0\\0\\1}\!,\! \nCkGen{0\\1\\0}\!,\! \nCkGen{0\\1\\1}\!,\! \nCkGen{0\\1\\2}\!,\! \nCkGen{1\\0\\0}\!,\! \nCkGen{1\\0\\1}\!,\! \nCkGen{1\\0\\2}\!,\! \nCkGen{1\\1\\0}\!,\! \nCkGen{1\\1\\2}\!,\! \nCkGen{1\\2\\0}\!,\! \nCkGen{1\\2\\1}\!\right\}$ & -- \\ \hline
13 & 6& $\left\{\!  \nCkGen{0\\0\\0}\!,\! \nCkGen{0\\1\\0}\!,\! \nCkGen{0\\1\\2}\!,\! \nCkGen{1\\0\\0}\!\right\}$ & -- \\ \hline
13 & 6& $\left\{\!  \nCkGen{0\\0\\0}\!,\! \nCkGen{0\\0\\1}\!,\! \nCkGen{0\\1\\0}\!,\! \nCkGen{0\\1\\2}\!,\! \nCkGen{1\\0\\0}\!,\! \nCkGen{1\\1\\0}\!,\! \nCkGen{1\\1\\2}\!,\! \nCkGen{1\\2\\0}\!\right\}$ & -- \\ \hline
15 & 3& $\left\{\!  \nCkGen{0\\0\\0}\!,\! \nCkGen{0\\0\\1}\!,\! \nCkGen{0\\1\\0}\!\right\}$ & -- \\ \hline
15 & 3& $\left\{\!  \nCkGen{0\\0\\0}\!,\! \nCkGen{0\\0\\1}\!,\! \nCkGen{0\\1\\0}\!,\! \nCkGen{1\\0\\0}\!,\! \nCkGen{1\\0\\1}\!,\! \nCkGen{1\\1\\1}\!,\! \nCkGen{1\\2\\0}\!\right\}$ & -- \\ \hline
15 & 4& $\left\{\!  \nCkGen{0\\0\\0}\!,\! \nCkGen{0\\0\\1}\!,\! \nCkGen{0\\1\\0}\!,\! \nCkGen{1\\0\\0}\!,\! \nCkGen{1\\0\\2}\!,\! \nCkGen{1\\1\\0}\!,\! \nCkGen{1\\2\\0}\!\right\}$ & -- \\ \hline
15 & 5& $\left\{\!  \nCkGen{0\\0\\0}\!,\! \nCkGen{0\\0\\1}\!,\! \nCkGen{0\\1\\0}\!,\! \nCkGen{0\\1\\1}\!,\! \nCkGen{1\\0\\0}\!,\! \nCkGen{1\\0\\1}\!,\! \nCkGen{1\\0\\2}\!,\! \nCkGen{1\\1\\0}\!,\! \nCkGen{1\\1\\1}\!,\! \nCkGen{1\\2\\0}\!,\! \nCkGen{1\\2\\1}\!\right\}$ & -- \\ \hline
17 & 3& $\left\{\!  \nCkGen{0\\0\\0}\!,\! \nCkGen{0\\0\\1}\!,\! \nCkGen{0\\1\\0}\!,\! \nCkGen{1\\0\\0}\!,\! \nCkGen{1\\0\\1}\!,\! \nCkGen{1\\1\\1}\!\right\}$ & -- \\ \hline
17 & 3& $\left\{\!  \nCkGen{0\\0\\0}\!,\! \nCkGen{0\\0\\1}\!,\! \nCkGen{0\\1\\1}\!,\! \nCkGen{0\\1\\2}\!,\! \nCkGen{1\\0\\0}\!,\! \nCkGen{1\\0\\1}\!,\! \nCkGen{1\\0\\2}\!,\! \nCkGen{1\\1\\0}\!,\! \nCkGen{1\\1\\1}\!,\! \nCkGen{1\\2\\1}\!\right\}$ & -- \\ \hline
17 & 4& $\left\{\!  \nCkGen{0\\0\\0}\!,\! \nCkGen{0\\0\\1}\!,\! \nCkGen{0\\1\\0}\!,\! \nCkGen{0\\1\\1}\!,\! \nCkGen{0\\1\\2}\!,\! \nCkGen{1\\0\\1}\!\right\}$ & -- \\ \hline
19 & 2& $\left\{\!  \nCkGen{0\\0\\0}\!,\! \nCkGen{0\\0\\1}\!,\! \nCkGen{0\\1\\0}\!,\! \nCkGen{0\\1\\1}\!,\! \nCkGen{0\\1\\2}\!,\! \nCkGen{1\\0\\0}\!,\! \nCkGen{1\\0\\1}\!,\! \nCkGen{1\\0\\2}\!,\! \nCkGen{1\\1\\0}\!,\! \nCkGen{1\\1\\1}\!,\! \nCkGen{1\\1\\2}\!,\! \nCkGen{1\\2\\0}\!,\! \nCkGen{1\\2\\1}\!\right\}$ & \{0,1,3\} \\ \hline
19 & 3& $\left\{\!  \nCkGen{0\\0\\0}\!,\! \nCkGen{0\\0\\1}\!,\! \nCkGen{0\\1\\0}\!,\! \nCkGen{1\\0\\0}\!,\! \nCkGen{1\\1\\1}\!\right\}$ & -- \\ \hline
19 & 3& $\left\{\!  \nCkGen{0\\0\\0}\!,\! \nCkGen{0\\0\\1}\!,\! \nCkGen{0\\1\\0}\!,\! \nCkGen{1\\0\\0}\!,\! \nCkGen{1\\0\\1}\!,\! \nCkGen{1\\0\\2}\!,\! \nCkGen{1\\1\\0}\!,\! \nCkGen{1\\1\\2}\!,\! \nCkGen{1\\2\\0}\!\right\}$ & -- \\ \hline
21 & 2& $\left\{\!  \nCkGen{0\\0\\0}\!,\! \nCkGen{0\\0\\1}\!,\! \nCkGen{0\\1\\0}\!,\! \nCkGen{0\\1\\2}\!\right\}$ & \{0,3\} \\ \hline
21 & 3& $\left\{\!  \nCkGen{0\\0\\0}\!,\! \nCkGen{0\\0\\1}\!,\! \nCkGen{0\\1\\0}\!,\! \nCkGen{0\\1\\1}\!,\! \nCkGen{1\\0\\0}\!,\! \nCkGen{1\\0\\1}\!,\! \nCkGen{1\\1\\1}\!,\! \nCkGen{1\\2\\0}\!\right\}$ & -- \\ \hline
23 & 2& $\left\{\!  \nCkGen{0\\0\\0}\!,\! \nCkGen{0\\1\\0}\!,\! \nCkGen{0\\1\\2}\!,\! \nCkGen{1\\0\\0}\!,\! \nCkGen{1\\0\\2}\!,\! \nCkGen{1\\1\\0}\!,\! \nCkGen{1\\1\\1}\!\right\}$ & -- \\ \hline
25 & 2& $\left\{\!  \nCkGen{0\\0\\0}\!,\! \nCkGen{0\\0\\1}\!,\! \nCkGen{0\\1\\1}\!,\! \nCkGen{0\\1\\2}\!,\! \nCkGen{1\\0\\0}\!,\! \nCkGen{1\\0\\1}\!,\! \nCkGen{1\\1\\0}\!,\! \nCkGen{1\\1\\1}\!,\! \nCkGen{1\\2\\0}\!,\! \nCkGen{1\\2\\1}\!\right\}$ & \{0,1\} \\ \hline
\end{tabular}
\caption{Comparing Parseval frames with Gramians $G=\sum_{g \in J} R_{\sdo g}$ obtained from groups $\Z_3^3$ and $\Z_{27}$, together with their code weights. See Example \ref{ex:Z3up3vsZ27}.}
\label{fig:Z3up3vsZ27}
\end{table}

\begin{ex}[$\Z_{27}$ vs. $\Z_3^3$, see Table~\ref{fig:Z3up3vsZ27}]\label{ex:Z3up3vsZ27}
The symmetric doubling orbit partitioning of $\Z_{27}$ consists of $\sdo{0}$, $\sdo{9}=\{9,18\}$, $\sdo{3}=\{3,6,12,15,21,24\}$, and $\sdo{1}=\Z_{27}\backslash(\sdo{0}\cup\sdo{9}\cup\sdo{3})$. The eight resulting Gramians  each represent a distinct automorphic switching equivalence class of binary Parseval $\Z_{27}$-frames.  The group $\Z_3^3$, as mentioned in Example~\ref{ex:costbenefit}, has 13 nontrivial symmetric doubling orbits and generates 30 automorphic switching equivalence classes of binary Parseval group frames.  
\end{ex}

\begin{ex}[$\Z_{125}$ vs. $\Z_5^3$, see Table~\ref{fig:Z5up3vsZ125}]\label{ex:Z5up3vsZ125}
The symmetric doubling orbit partitioning of $\Z_{125}$ consists of $\sdo{0}$ and three orbits, having orders  $\abs{\sdo{25}}=4$, $\abs{\sdo{5}}=20$, $\abs{\sdo{1}}=100$. As with with the other $\Z_{p^q}$ cases thus far, the symmetric doubling orbits of $\Z_{125}$ are invariant under automorphism on $\Z_{125}$. 
It follows that the eight distinct Gramians induced by the three nontrivial symmetric doubling orbits represent eight distinct classes of binary Parseval $\Z_{125}$-frames. 

The symmetric doubling orbit partitioning of $\Z_5^3$ consists of $\sdo{e}$ and 31 orbits of order 4. The $2^{31}$ distinct Gramians, each representing a distinct unitary equivalence class of binary Parseval $\Z_5^3$-frames, reduce to 7152 automorphic switching equivalence classes. Obtained by applying the algorithm described in this paper implemented in Matlab \cite{MatlabRepository}, Table~\ref{table:7152} gives a breakdown of the these classes by 
the size of $J$:
\begin{table}
{\small{\begin{center}
\begin{tabular}{|r|c|c|c|c|c|c|c|c|c|c|c|c|c|c|c|c|}\hline
$|J|$ & 1&2&3&4&5&6&7&8&9&10&11&12&13&14&15 & 16\\\hline
$N_{|J|}$ &1& 1& 1& 2& 3& 5& 12& 22& 42& 92& 174& 296& 476& 669& 832& 948\\ \hline
\end{tabular}
\smallskip

\begin{tabular}{|r|c|c|c|c|c|c|c|c|c|c|c|c|c|c|c|c|}\hline
$|J|$ &17&18&19&20&21&22&23&24&25&26&27&28&29&30&31& 32\\\hline
$N_{|J|}$ &948&832&669&476&296&174&92&42&22&12&5&3&2&1&1&1\\\hline
\end{tabular}.
\caption{Number of nonzero terms $|J|$ summed in $\sum_{g \in J} R_{\sdo g}$
and number $N_{|J|}$ of resulting automorphic switching equivalence classes.}\label{table:7152}
\end{center}
}}
\end{table}
The {\tiny{\begin{tabular}{|c|c|}\hline
0\\\hline1 \\\hline
\end{tabular}}} entry corresponds to the identity matrix; the bottom row, which gives the total number of automorphically switching equivalent classes per quantity of nontrivial symmetric doubling orbit summands, sums to 7152.

For obvious reasons, we do not list representatives from each of the 7152 automorphism equivalence classes. Instead, the comparisons in Table~\ref{fig:Z5up3vsZ125} place each of the six nontrivial Gramians of binary Parseval $\Z_{125}$-frames next to a $\Z_5^3$ representative of the same rank and having maximal code weight among binary Parseval $\Z_5^3$-frames of the same dimension.
\end{ex}
\begin{table}[h]\centering
\begin{tabular}{||c||c|c||c|c|}
\hline
$\!\!\begin{smallmatrix}\text{Gram}\\\text{rank}\end{smallmatrix}\!\!$&
$\!\!\begin{smallmatrix}\text{Code}\\\text{weight}\end{smallmatrix}\!\!$ &$J\subset\Z_5^3$&
 $\!\!\begin{smallmatrix}\text{Code}\\\text{weight}\end{smallmatrix}\!\!$ 
&$J\subset\Z_{125}$\\ \hline
5& 25& $\left\{\!\nCkGen{0\\0\\0}\!,\!  \nCkGen{0\\0\\1}\!,\! \nCkGen{1\\1\\0}\!,\! \nCkGen{1\\1\\1}\!,\! \nCkGen{1\\1\\2}\!,\! \nCkGen{1\\1\\3}\!,\! \nCkGen{1\\1\\4}\!\right\}$&25&\{0,5,25\}\\ \hline 
21&25& $\left\{\!\nCkGen{0\\0\\0}\!,\!  \nCkGen{0\\0\\1}\!,\! \nCkGen{1\\0\\0}\!,\! \nCkGen{1\\0\\1}\!,\! \nCkGen{1\\0\\2}\!,\! \nCkGen{1\\0\\3}\!,\! \nCkGen{1\\1\\3}\!,\! \nCkGen{1\\1\\4}\!,\! \nCkGen{1\\2\\0}\!,\! \nCkGen{1\\2\\1}\!,\! \nCkGen{1\\3\\0}\!\right\}$&10&\{0,1,25\} \\ \hline 
25 & 25& $\left\{\!\nCkGen{0\\0\\0}\!,\!  \nCkGen{0\\1\\0}\!,\! \nCkGen{0\\1\\3}\!,\! \nCkGen{1\\0\\1}\!,\! \nCkGen{1\\0\\2}\!,\! \nCkGen{1\\1\\3}\!,\! \nCkGen{1\\1\\4}\!,\! \nCkGen{1\\2\\0}\!,\! \nCkGen{1\\2\\2}\!,\! \nCkGen{1\\3\\2}\!\right\}$ &5&\{0,25\}\\ \hline 
101 &5& $\left\{\!\nCkGen{0\\0\\0}\!,\!  \nCkGen{0\\0\\1}\!,\! \nCkGen{0\\1\\0}\!,\! \nCkGen{1\\0\\4}\!,\! \nCkGen{1\\1\\3}\!,\! \nCkGen{1\\2\\1}\!,\! \nCkGen{1\\3\\0}\!\right\}$ &2&\{0,1,5\}\\ \hline 
105 & 5& $\begin{array}{cc}
\left\{\!\nCkGen{0\\0\\0}\!,\!  \nCkGen{0\\1\\0}\!,\! \nCkGen{0\\1\\1}\!,\! \nCkGen{0\\1\\2}\!,\! \nCkGen{0\\1\\3}\!,\! \nCkGen{0\\1\\4}\!,\! \nCkGen{1\\0\\4}\!,\! \nCkGen{1\\1\\0}\!,\! \nCkGen{1\\1\\1}\!,\! \nCkGen{1\\1\\2}\!,\! \nCkGen{1\\2\\2}\!,\vphantFour\hspace{1.3em}\right.\\
\left.\hspace{1.3em}\vphantFour\nCkGen{1\\2\\3}\!,\! \nCkGen{1\\2\\4}\!,\! \nCkGen{1\\3\\1}\!,\! \nCkGen{1\\3\\2}\!,\! \nCkGen{1\\3\\3}\!,\! \nCkGen{1\\3\\4}\!,\! \nCkGen{1\\4\\0}\!,\! \nCkGen{1\\4\\1}\!,\! \nCkGen{1\\4\\2}\!,\! \nCkGen{1\\4\\3}\!,\! \nCkGen{1\\4\\4}\!\right\}\end{array}$ &2&\{0,5\}\\ \hline 
121& 2& $\!\!\!\!\begin{array}{cc}\left\{\!\nCkGen{0\\0\\0}\!,\!  \nCkGen{0\\1\\0}\!,\! \nCkGen{0\\1\\1}\!,\! \nCkGen{0\\1\\2}\!,\! \nCkGen{0\\1\\3}\!,\! \nCkGen{0\\1\\4}\!,\! \nCkGen{1\\0\\0}\!,\! \nCkGen{1\\0\\1}\!,\! \nCkGen{1\\0\\2}\!,\! \nCkGen{1\\0\\3}\!,\! \nCkGen{1\\0\\4}\!,\! \nCkGen{1\\2\\0}\!,\! \nCkGen{1\\2\\1}\!,\vphantFour\hspace{.7em}\right.\\
\left.\hspace{.7em}\vphantFour \nCkGen{1\\2\\2}\!,\! \nCkGen{1\\2\\3}\!,\! \nCkGen{1\\2\\4}\!,\! \nCkGen{1\\3\\0}\!,\! \nCkGen{1\\3\\1}\!,\! \nCkGen{1\\3\\2}\!,\! \nCkGen{1\\3\\3}\!,\! \nCkGen{1\\3\\4}\!,\! \nCkGen{1\\4\\0}\!,\! \nCkGen{1\\4\\1}\!,\! \nCkGen{1\\4\\2}\!,\! \nCkGen{1\\4\\3}\!,\! \nCkGen{1\\4\\4}\!\right\}\end{array}\!\!\!\!$ & 2&\{0,1\}\\ \hline 
\end{tabular}
\caption{Comparing groups $\Z_5^3$ and $\Z_{125}$ as generators of binary Parseval frames, best performers for each given rank of the Gramian. See Example \ref{ex:Z5up3vsZ125}.}
\label{fig:Z5up3vsZ125}
\end{table}




\end{document}